\numberwithin{section}{chapter}
\numberwithin{figure}{chapter}
\begin{document}


\frontmatter

\makedisstitle

\begin{disssignatures}
  \committeemember Ji Li, Dept.\ of Mathematics
  \committeemember Thomas Roby, Dept.\ of Mathematics, University of Connecticut
\end{disssignatures}

\disscopyright 

\begin{dissdedication}
  For my friends, who keep me sane.
\end{dissdedication}

\begin{dissacknowledgments} 
  Without Ira's brilliant and understanding mentorship, this research would not have been possible.

  Without Susan's friendship and guidance or the camaraderie of the members of my cohort, I never would have made it to the point of doing graduate research.

  Without the encouragement David, Margaret, and Kim gave me, I might never have discovered mathematical research at all.

  Without my parents' patient and generous support or the forbearance of my many teachers from childhood into my graduate years, I would never even have discovered the academy.

  Without Janet's affection, fellowship, and tolerance of my eccentricities, my progress in these last years---and my spirits---would have been greatly diminished. 

  My life and this work are gift from and a testament to everyone whom I have known.
  Thank you all.
\end{dissacknowledgments}

\begin{dissabstract}
  The theory of $\Gamma$-species is developed to allow species-theoretic study of quotient structures in a categorically rigorous fashion.
  This new approach is then applied to two graph-enumeration problems which were previously unsolved in the unlabeled case---bipartite blocks and general $k$-trees.
\end{dissabstract}

\begin{disspreface}
  Historically, the algebra of generating functions has been a valuable tool in enumerative combinatorics.
  The theory of combinatorial species uses category theory to justify and systematize this practice, making clear the connections between structural manipulations of some objects of interest and algebraic manipulations of their associated generating functions.
  The notion of `quotient' enumeration (that is, of counting orbits under some group action) has been applied in species-theoretic contexts, but methods for doing so have largely been ad-hoc.
  We will contribute a species-compatible way for keeping track of the way a group $\Gamma$ acts on structures of a species $F$, yielding what we term a $\Gamma$-species, which has the sort of synergy of algebraic and structural data that we expect from species.
  We will then show that it is possible to extract information about the $\Gamma$-orbits of such a $\Gamma$-species and harness this new method to attack several unsolved problems in graph enumeration---in particular, the isomorphism classes of nonseparable bipartite graphs and $k$-trees (that is, `unlabeled' bipartite blocks and $k$-trees).

  It is assumed that the reader of this thesis is familiar with the classical theory of groups and that he has encountered at least the basic vocabularies of category theory and graph theory.
  Results in these fields which are not original to this thesis will either be referenced from the literature or simply assumed, depending on the degree to which they are part of the standard body of knowledge one acquires when studying those disciplines.

  In the first chapter, we outline the theory of species, develop several classical methods, and introduce the notion of a $\Gamma$-species.
  In the second chapter, we apply these techniques to the enumeration of unlabeled vertex-$2$-connected bipartite graphs, a historically open problem.
  In the third chapter, we apply these techniques to the more complex problem of the enumeration of unlabeled general $k$-trees, also historically unsolved.
  Finally, in an appendix we discuss algebraic and computational methods which allow species-theoretical insights to be translated into explicit algorithmic techniques for enumeration.
\end{disspreface}

\setcounter{tocdepth}{2}
\tableofcontents 

\listoffigures 

\listoftables 

\mainmatter

\chapter{The theory of species}\label{c:species}
\section{Introduction}\label{s:introspec}
Many of the most important historical problems in enumerative combinatorics have concerned the difficulty of passing from `labeled' to `unlabeled' structures.
In many cases, the algebra of generating functions has proved a powerful tool in analyzing such problems.
However, the general theory of the association between natural operations on classes of such structures and the algebra of their generating functions has been largely ad-hoc.
Andr\'{e} Joyal's introduction of the theory of combinatorial species in \cite{joy:species} provided the groundwork to formalize and understand this connection.
A full, pedagogical exposition of the theory of species is available in \cite{bll:species}, so we here present only an outline, largely tracking that text.

To begin, we wish to formalize the notion of a `construction' of a structure of some given class from a set of `labels', such as the construction of a graph from its vertex set or or that of a linear order from its elements.
The language of category theory will allow us capture this behavior succinctly yet with full generality:
\begin{definition}\label{def:species}
  Let $\catname{FinBij}$ be the category of finite sets with bijections and $\catname{FinSet}$ be the category of finite sets with set maps.
  Then a \emph{species} is a functor $F: \catname{FinBij} \to \catname{FinSet}$.
  For a set $A$ and a species $F$, an element of $F \sbrac{A}$ is an \emph{$F$-structure on $A$}.
  Moreover, for a species $F$ and a bijection $\phi: A \to B$, the bijection $F \sbrac{\phi}: F \sbrac{A} \to F \sbrac{B}$ is the \emph{$F$-transport of $\phi$}.
\end{definition}
A species functor $F$ simply associates to each set $A$ another set $F \sbrac{A}$ of its $F$-structures; for example, for $\specname{S}$ the species of permutations, we associate to some set $A$ the set $\specname{S} \sbrac{A} = \operatorname{Bij} \pbrac{A}$ of self-bijections (that is, permutations as maps) of $A$.
This association of label set $A$ to the set $F \sbrac{A}$ of all $F$-structures over $A$ is fundamental throughout combinatorics, and functorality is simply the requirement that we may carry maps on the label set through the construction.

\begin{example}
  \label{ex:graphspecies}
  Let $\specname{G}$ denote the species of simple graphs labeled at vertices.
  Then, for any finite set $A$ of labels, $G \sbrac{A}$ is the set of simple graphs with $\abs{A}$ vertices labeled by the elements of $A$.
  For example, for label set $A = \sbrac{3} = \cbrac{1, 2, 3}$, there are eight graphs in $\specname{G} \sbrac{A}$, since there are $\binom{3}{2} = 3$ possible edges and thus $2^{3} = 8$ ways to choose a subset of those edges:
  \begin{equation*}
    \specname{G} \sbrac{\cbrac{1, 2, 3}} = \cbrac{
      \begin{array}{c}
        \begin{aligned}
          \tikz{ \node[style=graphnode](1) at (90:1) {1};
            \node[style=graphnode](2) at (210:1) {2};
            \node[style=graphnode](3) at (330:1) {3};
          }        
        \end{aligned},
        \begin{aligned}
          \tikz{ \node[style=graphnode](1) at (90:1) {1};
            \node[style=graphnode](2) at (210:1) {2};
            \node[style=graphnode](3) at (330:1) {3};
            \draw(1) to (2);
          }
        \end{aligned},
        \begin{aligned}
          \tikz{ \node[style=graphnode](1) at (90:1) {1};
            \node[style=graphnode](2) at (210:1) {2};
            \node[style=graphnode](3) at (330:1) {3};
            \draw(2) to (3);
          }
        \end{aligned},
        \begin{aligned}
          \tikz{ \node[style=graphnode](1) at (90:1) {1};
            \node[style=graphnode](2) at (210:1) {2};
            \node[style=graphnode](3) at (330:1) {3};
            \draw(1) to (3);
          }
        \end{aligned}, \\
        \begin{aligned}
          \tikz{ \node[style=graphnode](1) at (90:1) {1};
            \node[style=graphnode](2) at (210:1) {2};
            \node[style=graphnode](3) at (330:1) {3};
            \draw(1) to (2);
            \draw(1) to (3);
            \draw(2) to (3);
          }
        \end{aligned},
        \begin{aligned}
          \tikz{ \node[style=graphnode](1) at (90:1) {1};
            \node[style=graphnode](2) at (210:1) {2};
            \node[style=graphnode](3) at (330:1) {3};
            \draw(1) to (3);
            \draw(2) to (3);
          }
        \end{aligned},
        \begin{aligned}
          \tikz{ \node[style=graphnode](1) at (90:1) {1};
            \node[style=graphnode](2) at (210:1) {2};
            \node[style=graphnode](3) at (330:1) {3};
            \draw(1) to (2);
            \draw(1) to (3);
          }
        \end{aligned},
        \begin{aligned}
          \tikz{ \node[style=graphnode](1) at (90:1) {1};
            \node[style=graphnode](2) at (210:1) {2};
            \node[style=graphnode](3) at (330:1) {3};
            \draw(1) to (2);
            \draw(2) to (3);
          }
        \end{aligned}
      \end{array}
    }.
  \end{equation*}

  The symmetric group $\symgp{3}$ acts on the set $\sbrac{3}$ as permutations.
  Consider the permutation $\pmt{(23)}$ that interchanges $2$ and $3$ in $\sbrac{3}$.
  Then $\specname{G} \sbrac{\pmt{(23)}}$ is a permutation on the set $\specname{G} \sbrac{\cbrac{1, 2, 3}}$; for example,
  \begin{equation*}
    \specname{G} \sbrac{\pmt{(23)}} \pbrac{
        \begin{aligned}
          \tikz{ \node[style=graphnode](1) at (90:1) {1};
            \node[style=graphnode](2) at (210:1) {2};
            \node[style=graphnode](3) at (330:1) {3};
            \draw(1) to (2);
          }
        \end{aligned}
      } =
      \begin{aligned}
        \tikz{ \node[style=graphnode](1) at (90:1) {1};
          \node[style=graphnode](2) at (210:1) {2};
          \node[style=graphnode](3) at (330:1) {3};
          \draw(1) to (3);
        }.
      \end{aligned}.
    \end{equation*}
\end{example}

Since the image of a bijection under such a functor is necessarily itself a bijection, many authors instead simply define a species as a functor $F: \catname{FinBij} \to \catname{FinBij}$.
Our motivation for using this definition instead will become clear in \cref{s:quot}.

Note that, having defined the species $F$ to be a functor, we have the following properties:
\begin{itemize}
\item for any two bijections $\alpha: A \to B$ and $\beta: B \to C$, we have $F \sbrac{\alpha \circ \beta} = F \sbrac{\alpha} \circ F \sbrac{\beta}$, and
\item for any set $A$, we have $F \sbrac{\Id_{A}} = \Id_{F \sbrac{A}}$.
\end{itemize}
Accordingly, we (generally) need not concern ourselves with the details of the set $A$ of labels we consider, so we will often restrict our attention to a canonical label set $\sbrac{n} := \cbrac{1, 2, \dots, n}$ for each cardinality $n$.
Moreover, the permutation group $\symgp{A}$ on any given set $A$ acts by self-bijections of $A$ and induces \emph{automorphisms} of $F$-structures for a given species $F$.
The orbits of $F$-structures on $A$ under the induced action of $\symgp{A}$ are then exactly the `unlabeled' structures of the class $F$, such as unlabeled graphs.

Finally, we note that it is often natural to speak of maps between classes of combinatorial structures, and that these maps are sometimes combinatorially `natural'.
For example, we might wish to map the species of trees into the species of general graphs by embedding; to map the species of connected bicolored graphs to the species of connected bipartite graphs by forgetting some color information; or the species of graphs to the species of sets of connected graphs by identification.
These maps are all `natural' in the sense that they are explicitly structural and do not reference labels; thus, at least at a conceptual level, they are compatible with the motivating ideas of species.
We can formalize this notion in the language of categories:
\begin{definition}
  \label{def:specmap}
  Let $F$ and $G$ be species. A \emph{species map} $\phi$ is a natural transformation $\phi: F \to G$ --- that is, an association to each set $A \in \catname{FinBij}$ of a set map $\phi_{A} \in \catname{FinSet}$ such that the following diagram commutes:
  \begin{equation*}
    \begin{tikzpicture}[every node/.style={fill=white}]
      \matrix (m) [matrix of math nodes, row sep=4em, column sep=4em]
      {
        F \sbrac{A} & F \sbrac{B} \\
        G \sbrac{A}  & G \sbrac{B} \\
      };
      \path[->,font=\scriptsize]
      (m-1-1) edge node {$\phi_{A}$} (m-2-1)
      edge node {$F \sbrac{\sigma}$} (m-1-2)
      (m-2-1) edge node {$G \sbrac{\sigma}$} (m-2-2)
      (m-1-2) edge node {$\phi_{B}$} (m-2-2);
    \end{tikzpicture}
  \end{equation*}
  We call the set map $\phi_{A}$ the \emph{$A$ component of $\phi$} or the \emph{component of $\phi$ at $A$}.
\end{definition}

Such species maps may capture the idea that two species are essentially `the same' or that one `contains' or `sits inside' another.
\begin{definition}
  \label{def:specmaptypes}
  Let $F$ and $G$ be species and $\phi: F \to G$ a species map between them.
  In the case that the components $\phi_{A}$ are all bijections, we say that $\phi$ is a \emph{species isomorphism} and that $F$ and $G$ are \emph{isomorphic}.
  In the case that the components $\phi_{A}$ are all injections, we say that $\phi$ is a \emph{species embedding} and that $F$ \emph{embeds in} $G$ (denoted $\phi: F \hookrightarrow G$).
  Likewise, in the case that the components $\phi_{A}$ are all surjections, we say that $\phi$ is a \emph{species covering} and that $F$ \emph{covers} $G$ (denoted $\phi: F \twoheadrightarrow G$).
\end{definition}

With the full power of the language of categories, we may make the following more general observation:
\begin{note}
  \label{note:specmapcattheo}
  Let $\catname{Spc}$ denote the functor category of species; that is, define $\catname{Spc} \defeq \catname{FinSet}^{\catname{FinBij}}$, the category of functors from $\catname{FinBij}$ to $\catname{FinSet}$.
  Species maps as defined in \cref{def:specmap} are natural transformations of these functors and thus are exactly the morphisms of $\catname{Spc}$.
  
  It is a classical theorem of category theory (cf.\ \cite{mac:cftwm}) that the epi- and monomorphisms of a functor category are exactly those whose components are epi- and monomorphisms in the target category if the target category has pullbacks and pushouts.
  Since $\catname{FinSet}$ is such a category, species embeddings and species coverings are precisely the epi- and monomorphisms of the functor category $\catname{Spc}$.
  Species isomorphisms are of course the categorical isomorphisms in $\catname{Spc}$.
\end{note}

In the case that $F$ and $G$ are isomorphic species, we will often simply write $F = G$, since they are combinatorially equivalent; some authors instead use $F \simeq G$, reserving the notation of equality for the much stricter case that additionally requires that $F \sbrac{A} = G \sbrac{A}$ as sets for all $A$.
The notions of species embedding and species covering are original to this work.

\begin{example}
  In the motivating examples from above:
  \begin{itemize}
  \item The species $\mathfrak{a}$ of trees \emph{embeds} in the species $\specname{G}$ of graphs by the map which identifies each tree with itself as a graph, since any two distinct trees are distinct as graphs.
  \item The species $\specname{BC}$ of bicolored graphs \emph{covers} the species $\specname{BP}$ of bipartite graphs by the map which sends each bicolored graph to its underlying bipartite graph, since every bipartite graph has at least one bicoloring.
  \item The species $\specname{G}$ of graphs is \emph{isomorphic} with the species $\specname{E} \pbrac{\specname{G}^{\specname{C}}}$ of sets of connected graphs by the map which identifies each graph with its set of connected components, since this decomposition exists uniquely.
  \end{itemize}
\end{example}

\section{Cycle indices and species enumeration}\label{s:cycind}
In classical enumerative combinatorics, formal power series known as `generating functions' are used extensively for keeping track of enumerative data.
In this spirit, we now associate to each species a formal power series which counts structures with respect to their automorphisms, which will prove to be significantly more powerful:
\begin{definition}
  \label{def:cycind}
  For a species $F$, define its \emph{cycle index series} to be the power series
  \begin{equation}
    \label{eq:cycinddef}
    \civars{F}{p_{1}, p_{2}, \dots} := \sum_{n \geq 0} \frac{1}{n!} \big( \sum_{\sigma \in \symgp{n}} \fix \pbrac{F \sbrac{\sigma}} p_{1}^{\sigma_{1}} p_{2}^{\sigma_{2}} \dots \big) = \sum_{n \geq 0} \frac{1}{n!} \big( \sum_{\sigma \in \symgp{n}} \fix \pbrac{F \sbrac{\sigma}} p_{\sigma} \big)
  \end{equation}
  where $\fix \pbrac{F \sbrac{\sigma}} := \abs{\cbrac{s \in F \sbrac{A} : F \sbrac{\sigma} \pbrac{s} = s}}$, where $\sigma_{i}$ is the number of $i$-cycles of $\sigma$, and where $p_{i}$ are indeterminates.
  (That is, $\fix \pbrac{F \sbrac{\sigma}}$ is the \emph{number} of $F$-structures fixed under the action of the transport of $\sigma$.)
  We will make extensive use of the compressed notation $p_{\sigma} = p_{1}^{\sigma_{1}} p_{2}^{\sigma_{2}} \dots$ hereafter.
\end{definition}

In fact, by functorality, $\fix \pbrac{F \sbrac{\sigma}}$ is a class function\footnote{That is, the value of $\fix \pbrac{F \sbrac{\sigma}}$ will be constant on conjugacy classes of permutations, which we note are exactly the sets of permutations of fixed cycle type.} on permutations $\sigma \in \symgp{n}$.
Accordingly, we can instead consider all permutations of a given cycle type at once.
It is a classical theorem that conjugacy classes of permutations in $\symgp{n}$ are indexed by partitions $\lambda \vdash n$, which are defined as multisets of natural numbers whose sum is $n$.
In particular, conjugacy classes are determined by their cycle type, the multiset of the lengths of the cycles, which may clearly be identified bijectively with partitions of $n$.
For a given partition $\lambda \vdash n$, there are $n! / z_{\lambda}$ permutations of cycle type $\lambda$, where $z_{\lambda} := \prod_{i} i^{\lambda_{i}} \lambda_{i}!$ where $\lambda_{i}$ denotes the multiplicity of $i$ in $\lambda$..
Thus, we can instead write
\begin{equation}
  \label{eq:cycinddefpart}
  \civars{F}{p_{1}, p_{2}, \dots} := \sum_{n \geq 0} \sum_{\lambda \vdash n} \fix \pbrac{F \sbrac{\lambda}} \frac{p_{1}^{\lambda_{1}} p_{2}^{\lambda_{2}} \dots}{z_{\lambda}} = \sum_{n \geq 0} \sum_{\lambda \vdash n} \fix \pbrac{F \sbrac{\lambda}} \frac{p_{\lambda}}{z_{\lambda}}
\end{equation}
for $\fix F \sbrac{\lambda} := \fix F \sbrac{\sigma}$ for some choice of a permutation $\sigma$ of cycle type $\lambda$.
Again, we will make extensive use of the notation $p_{\lambda} = p_{\sigma}$ hereafter.

That the cycle index $\ci{F}$ usefully characterizes the enumerative structure of the species $F$ may not be clear.
However, as the following theorems show, both labeled and unlabeled enumeration are immediately possible once the cycle index is in hand.
Recall that, for a given sequence $a = \pbrac{a_{0}, a_{1}, a_{2}, \dots}$, the \emph{ordinary generating function}\footnote{Although these are called `functions' for historical reasons, convergence of these formal power series is often not of immediate interest.} of $a$ is the formal power series $\tilde{A} \pbrac{x} = \sum_{i = 0}^{\infty} a_{i} x^{i}$ and the \emph{exponential generating function}  is the formal power series $A \pbrac{x} = \sum_{i = 1}^{\infty} \frac{1}{i!} a_{i} x^{i}$.
The scaling factor of $\frac{1}{n!}$ in the exponential generating function is convenient in many contexts; for example, it makes differentiation of the generating function a combinatorially-significant operation.
The cycle index of a species is then directly related to two important generating functions:
\begin{theorem}\label{thm:ciegf}
  The exponential generating function $F \pbrac{x}$ of labeled $F$-structures is given by
  \begin{equation}\label{eq:ciegf}
    F \pbrac{x} = \civars{f}{x, 0, 0, \dots}.
  \end{equation}
\end{theorem}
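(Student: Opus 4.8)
The plan is to compute the specialization $\civars{F}{x, 0, 0, \dots}$ directly from the definition \eqref{eq:cycinddef} and observe that setting every indeterminate beyond $p_{1}$ to zero annihilates all but a single term in each inner sum. First I would substitute $p_{1} = x$ and $p_{i} = 0$ for all $i \geq 2$ into the monomial $p_{\sigma} = p_{1}^{\sigma_{1}} p_{2}^{\sigma_{2}} \dots$, where $\sigma_{i}$ counts the $i$-cycles of $\sigma$. The factor $0^{\sigma_{i}}$ forces this monomial to vanish unless $\sigma_{i} = 0$ for every $i \geq 2$; that is, unless $\sigma$ consists entirely of fixed points.

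The only such permutation in $\symgp{n}$ is the identity $\Id_{\sbrac{n}}$, for which $\sigma_{1} = n$ and hence $p_{\sigma}$ specializes to $x^{n}$. Thus each inner sum over $\symgp{n}$ collapses to the single summand indexed by the identity. Here I would invoke functorality: since $F \sbrac{\Id_{\sbrac{n}}} = \Id_{F \sbrac{\sbrac{n}}}$, every $F$-structure on $\sbrac{n}$ is fixed, so $\fix \pbrac{F \sbrac{\Id_{\sbrac{n}}}} = \abs{F \sbrac{\sbrac{n}}}$, which is precisely the number $f_{n}$ of labeled $F$-structures on an $n$-element set. Assembling these observations gives
\begin{equation*}
  \civars{F}{x, 0, 0, \dots} = \sum_{n \geq 0} \frac{1}{n!} f_{n} x^{n},
\end{equation*}
which is exactly the exponential generating function $F \pbrac{x}$ of labeled $F$-structures, completing the argument.

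There is really no hard step here --- the entire content is the bookkeeping observation that the specialization $p_{i} \mapsto 0$ for $i \geq 2$ isolates the identity permutation. The one point that deserves care, and which I would state explicitly rather than gloss over, is the appeal to functorality guaranteeing $F \sbrac{\Id} = \Id$; this is what converts the fixed-point count at the identity into the raw count of labeled structures. If one preferred, one could run the same argument starting from the partition-indexed form \eqref{eq:cycinddefpart}, in which the surviving term is the partition $\lambda = 1^{n}$ with $z_{\lambda} = n!$, yielding the identical result.
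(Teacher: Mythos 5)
Your proposal is correct and is exactly the argument the paper intends: the paper defers to \cite[\S 1.2]{bll:species} and summarizes the proof in one line---the specialization $p_{1} = x$, $p_{i} = 0$ for $i \geq 2$ kills every permutation except the identity, so each labeled structure is counted once, as a fixed point of the trivial automorphism, with weight $1/n!$. You have simply spelled out that same computation in full detail, including the (correct) appeal to functorality for $\fix\pbrac{F\sbrac{\Id}} = \abs{F\sbrac{\sbrac{n}}}$.
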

\begin{theorem}\label{thm:ciogf}
  The ordinary generating function $\tilde{F} \pbrac{x}$ of unlabeled $F$-structures is given by
  \begin{equation}\label{eq:ciogf}
    \tilde{F} \pbrac{x} = \ci{F} \pbracs[big]{x, x^{2}, x^{3}, \dots}.
  \end{equation}
\end{theorem}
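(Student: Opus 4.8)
The plan is to substitute $p_i = x^i$ into the definition \eqref{eq:cycinddef} of $\ci{F}$ and then to reinterpret the resulting coefficients by an orbit-counting argument. First I would note that under this substitution each monomial $p_\sigma = p_1^{\sigma_1} p_2^{\sigma_2} \dots$ collapses to a single power of $x$. Indeed, a permutation $\sigma \in \symgp{n}$ partitions $\sbrac{n}$ into its cycles, so the cycle-length data satisfy $\sum_i i \, \sigma_i = n$; hence $p_\sigma \mapsto x^{\sum_i i \sigma_i} = x^n$ for \emph{every} $\sigma \in \symgp{n}$, uniformly across the whole inner sum. Collecting terms by $n$ then yields
\[
  \ci{F} \pbrac{x, x^2, x^3, \dots} = \sum_{n \geq 0} \frac{x^n}{n!} \sum_{\sigma \in \symgp{n}} \fix \pbrac{F \sbrac{\sigma}},
\]
so that the coefficient of $x^n$ is exactly $\frac{1}{n!} \sum_{\sigma \in \symgp{n}} \fix \pbrac{F \sbrac{\sigma}}$.

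The second and central step is to recognize this coefficient as the number of unlabeled $F$-structures of size $n$. As observed following \cref{def:species}, the functoriality identities $F \sbrac{\alpha \circ \beta} = F \sbrac{\alpha} \circ F \sbrac{\beta}$ and $F \sbrac{\Id_{A}} = \Id_{F \sbrac{A}}$ guarantee that the transport maps $F \sbrac{\sigma}$ define a genuine action of $\symgp{n}$ on the finite set $F \sbrac{\sbrac{n}}$, whose orbits are precisely the unlabeled $F$-structures on $n$ labels. I would then invoke the Cauchy--Frobenius (Burnside) lemma: for a finite group $G$ acting on a finite set $X$, the number of orbits equals $\frac{1}{\abs{G}} \sum_{g \in G} \abs{\mathrm{Fix} \pbrac{g}}$. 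Applying this with $G = \symgp{n}$ and $X = F \sbrac{\sbrac{n}}$, and recalling that $\fix \pbrac{F \sbrac{\sigma}}$ is by definition the number of fixed points of the transport $F \sbrac{\sigma}$, identifies $\frac{1}{n!} \sum_{\sigma \in \symgp{n}} \fix \pbrac{F \sbrac{\sigma}}$ with the number of such orbits.

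Combining the two steps, the coefficient of $x^n$ in $\ci{F} \pbrac{x, x^2, x^3, \dots}$ equals the number of unlabeled $F$-structures on $n$ labels for every $n \geq 0$. Since the ordinary generating function $\tilde{F} \pbrac{x}$ is by definition the power series whose $x^n$-coefficient is exactly that count, the two series coincide, which is \eqref{eq:ciogf}.

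I do not anticipate a serious obstacle: the two ingredients are the elementary identity $\sum_i i \, \sigma_i = n$ and the Cauchy--Frobenius lemma, both entirely standard. The only points demanding care are bookkeeping ones --- confirming that the substitution is well-defined as an operation on formal power series (only finitely many monomials $p_\sigma$ contribute to a given power of $x$, so each coefficient is a finite sum), and confirming that the $\symgp{n}$-action is honestly a group action so that Burnside applies, which the functoriality of $F$ supplies immediately.
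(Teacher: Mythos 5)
Your proposal is correct and follows exactly the argument the paper relies on: the paper defers to \cite[\S 1.2]{bll:species} and remarks that \cref{eq:ciogf} ``simply counts orbits \foreign{\`{a} la} Burnside's Lemma,'' which is precisely your substitution-then-Burnside computation. You have merely filled in the standard details (the collapse $p_{\sigma} \mapsto x^{n}$ and the verification that functoriality makes the transports a genuine $\symgp{n}$-action), so there is nothing to correct.
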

Proofs of both theorems are found in \cite[\S 1.2]{bll:species}.
In essence, \cref{eq:ciegf} counts each labeled structure exactly once (as a fixed point of the trivial automorphism on $\sbrac{n}$) with a factor of $1/n!$, while \cref{eq:ciogf} simply counts orbits \foreign{\`{a} la} Burnside's Lemma.
In cases where the unlabeled enumeration problem is interesting, it is generally more challenging than the labeled enumeration of the same structures, since the characterization of isomorphism in a species may be nontrivial to capture in a generating function.
If, however, we can calculate the complete cycle index for a species, both labeled and unlabeled enumerations immediately follow.

The use of $p_{i}$ for the variables instead of the more conventional $x_{i}$ alludes to the theory of symmetric functions, in which $p_{i}$ denotes the power-sum functions $p_{i} = \sum_{j} x_{j}^{i}$, which form an important basis for the ring $\Lambda$ of symmetric functions.
When the $p_{i}$ are understood as symmetric functions rather than simply indeterminates, additional P\'{o}lya-theoretic enumerative information is exposed.
In particular, the symmetric function in $x$-variables underlying a cycle index in $p$-variables may be said to count \emph{partially}-labeled structures of a given species, where the coefficient on a monomial $\prod x_{i}^{\alpha_{i}}$ counts structures with $\alpha_{i}$ labels of each sort $i$.
This serves to explain why the coefficients of powers of $p_{1} = \sum_{i} x_{i}$ counts labeled structures (where the labels must all be distinct) and why the automorphism types of structures are enumerated by $\civars{f}{x, x^{2}, x^{3}, \cdots}$, which allows clusters of labels to be the same.
Another application of the theory of symmetric functions to the cycle indices of species may be found in \cite{gessel:laginvspec}.

A more detailed exploration of the history of cycle index polynomials and their relationship to classical P\'{o}lya theory may be found in \cite{jili:pointdet}.

Of course, it is not always obvious how to calculate the cycle index of a species directly.
However, in cases where we can decompose a species as some combination of simpler ones, we can exploit these relationships algebraically to study the cycle indices, as we will see in the next section.

\section{Algebra of species}\label{s:specalg}
It is often natural to describe a species in terms of combinations of other, simpler species---for example, `a permutation is a set of cycles' or `a rooted tree is a single vertex together with a (possibly empty) set of rooted trees'.
Several combinatorial operations on species of structures are commonly used to represent these kinds of combinations; that they have direct analogues in the algebra of cycle indices is in some sense the conceptual justification of the theory.
In particular, for species $F$ and $G$, we will define species $F + G$, $F \cdot G$, $F \circ G$, $\pointed{F}$, and $F'$, and we will compute their cycle indices in terms of $\ci{F}$ and $\ci{G}$.
In what follows, we will not say explicitly what the effects of a given species operation are on bijections when those effects are obvious (as is usually the case).

\begin{definition}\label{def:specsum}
  For two species $F$ and $G$, define their \emph{sum} to be the species $F + G$ given by $\pbracs[big]{F + G} \sbrac{A} = F \sbrac{A} \sqcup G \sbrac{A}$ (where $\sqcup$ denotes disjoint set union).
\end{definition}
In other words, an $\pbrac{F + G}$-structure is an $F$-structure \emph{or} a $G$-structure.
We use the disjoint union to avoid the complexities of dealing with cases where $F \sbrac{A}$ and $G \sbrac{A}$ overlap as sets.

\begin{theorem}\label{thm:specsumci}
  For species $F$ and $G$, the cycle index of their sum is
  \begin{equation}
    \label{eq:specsumci}
    \ci{F + G} = \ci{F} + \ci{G}.
  \end{equation}
\end{theorem}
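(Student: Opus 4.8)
The plan is to work directly from the definition of the cycle index series in \cref{eq:cycinddef} and reduce the claim to a statement about the fixed points of the transport maps. First I would unpack the transport of the sum species: by \cref{def:specsum}, for any permutation $\sigma \in \symgp{n}$ the transport $\pbrac{F + G} \sbrac{\sigma}$ acts on the disjoint union $F \sbrac{\sbrac{n}} \sqcup G \sbrac{\sbrac{n}}$ as $F \sbrac{\sigma}$ on the $F$-summand and as $G \sbrac{\sigma}$ on the $G$-summand. This is precisely the natural (``obvious'') transport on the sum species, and it respects the disjoint-union decomposition, since the two summands are kept separate and no structure is simultaneously an $F$-structure and a $G$-structure.

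The key step is then to count fixed points. Because the two summands are disjoint and each is individually preserved by $\pbrac{F + G} \sbrac{\sigma}$, a structure $s \in \pbrac{F + G} \sbrac{\sbrac{n}}$ is fixed by the transport if and only if it lies in exactly one summand and is fixed by the corresponding transport restricted there. This yields the pointwise identity
\begin{equation*}
  \fix \pbrac{\pbrac{F + G} \sbrac{\sigma}} = \fix \pbrac{F \sbrac{\sigma}} + \fix \pbrac{G \sbrac{\sigma}}
\end{equation*}
for every $n$ and every $\sigma \in \symgp{n}$.

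Finally I would substitute this identity into the defining double sum of \cref{eq:cycinddef} and distribute along the addition: since the coefficient $\frac{1}{n!} p_{\sigma}$ multiplying each $\fix$ term does not depend on the species, splitting $\fix \pbrac{\pbrac{F + G} \sbrac{\sigma}}$ separates the whole series into two copies of \cref{eq:cycinddef}, one for $F$ and one for $G$, giving exactly $\ci{F} + \ci{G}$. I expect no real obstacle in this argument; the only point that deserves explicit mention is the verification that the transport of the sum decomposes cleanly along the disjoint union, which the paper's convention of leaving obvious transport actions implicit renders nearly immediate.
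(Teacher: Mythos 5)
Your proof is correct. The paper itself states this theorem without proof (it is a classical fact from the species literature, deferred to \cite{bll:species}), and your argument---transport of the sum acts componentwise on the disjoint union, hence $\fix \pbrac{\pbrac{F+G}\sbrac{\sigma}} = \fix \pbrac{F\sbrac{\sigma}} + \fix \pbrac{G\sbrac{\sigma}}$, then substitute into the defining series---is exactly the standard argument, with the one point worth making explicit (the componentwise transport) correctly identified and justified.
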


In the case that $F = G_{1} + G_{2}$, we can simply invert the equation and write $F - G_{2} = G_{1}$.
However, we may instead wish to study the species $F - G$ without first writing $F$ as a sum.
In the spirit of the definition of species addition, we wish to define the species subtraction $F - G$ as the species of $F$-structures that `are not' $G$-structures.
For slightly more generality, we may apply the notions of \cref{def:specmaptypes}:
\begin{definition}
  \label{def:specdif}
  For two species $F$ and $G$ with a species embedding $\phi: G \to F$, define their \emph{difference with respect to $\phi$} to be the species $F \specsub{\phi} G$ given by $\pbracs[big]{F \specsub{\phi} G} \sbrac{A} \defeq F \sbrac{A} - \phi \pbrac{G \sbrac{A}}$.
  When there is no ambiguity about the choice of embedding $\phi$, especially in the case that $G$ has a combinatorially natural embedding in $F$, we may instead simply write $F - G$ and call this species their \emph{difference}.
\end{definition}

For example, for $\specname{G}$ the species of graphs and $\mathfrak{a}$ the species of trees with the natural embedding, we have that $\specname{G} - \mathfrak{a}$ is the species of graphs with cycles.

We note also that species addition is associative and commutative (up to species isomorphism), and furthermore the empty species $\numspecname{0}: A \mapsto \varnothing$ is an additive identity, so species with addition form an abelian monoid.
This can be completed to create the abelian group of \emph{virtual species}, in which the subtraction $F - G$ of arbitrary species is defined; the two definitions in fact agree where our definition applies.
We will not delve into the details of virtual species theory here, directing the reader instead to \cite[\S 2.5]{bll:species}.

\begin{definition}\label{thm:specprod}
  For two species $F$ and $G$, define their \emph{product} to be the species $F \cdot G$ given by $\pbrac{F \cdot G} \sbrac{A} = \sum_{A = B \sqcup C} F \sbrac{B} \times G \sbrac{C}$.
\end{definition}
In other words, an $\pbrac{F \cdot G}$-structure is a partition of $A$ into two sets $B$ and $C$, an $F$-structure on $B$, and a $G$-structure on $C$.
This definition is partially motivated by the following result on cycle indices:
\begin{theorem}\label{thm:specprodci}
  For species $F$ and $G$, the cycle index of their product is
  \begin{equation}
    \label{eq:specprodci}
    \ci{F \cdot G} = \ci{F} \cdot \ci{G}.
  \end{equation}
\end{theorem}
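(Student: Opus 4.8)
The plan is to unwind both sides to sums over permutations and match them via the Cauchy-product structure of exponential-type generating series. First I would recall from \cref{def:cycind} that $\ci{F \cdot G} = \sum_{n \geq 0} \frac{1}{n!} \sum_{\sigma \in \symgp{n}} \fix\pbrac{(F \cdot G)\sbrac{\sigma}} p_{\sigma}$, so the whole problem reduces to computing $\fix\pbrac{(F \cdot G)\sbrac{\sigma}}$ for a fixed $\sigma \in \symgp{n}$. By \cref{thm:specprod}, an $(F \cdot G)$-structure on $\sbrac{n}$ is a tuple $(B, C, s, t)$ with $\sbrac{n} = B \sqcup C$, $s \in F\sbrac{B}$, and $t \in G\sbrac{C}$, and its transport under $\sigma$ is $\pbrac{\sigma B, \sigma C, F\sbrac{\sigma|_{B}}(s), G\sbrac{\sigma|_{C}}(t)}$.

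The first key step is to observe that such a structure is fixed by $\sigma$ exactly when $\sigma$ stabilizes the blocks $B$ and $C$ setwise---because the partition $\cbrac{B, C}$ is itself part of the data---and additionally satisfies $F\sbrac{\sigma|_{B}}(s) = s$ and $G\sbrac{\sigma|_{C}}(t) = t$. Hence $\fix\pbrac{(F \cdot G)\sbrac{\sigma}} = \sum \fix(F\sbrac{\sigma|_{B}})\, \fix(G\sbrac{\sigma|_{C}})$, the sum ranging over partitions $\sbrac{n} = B \sqcup C$ stabilized by $\sigma$. I would pair this with the elementary but essential fact that $p_{\sigma} = p_{\sigma|_{B}}\, p_{\sigma|_{C}}$ whenever $\sigma$ stabilizes $B$ and $C$, since the cycles of $\sigma$ are partitioned among $B$ and $C$ so that the multiset of cycle lengths of $\sigma$ is the disjoint union of those of its two restrictions.

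Substituting and interchanging the order of summation, I would sum first over the splitting $\sbrac{n} = B \sqcup C$ and then over the pairs $(\tau, \rho) \in \symgp{B} \times \symgp{C}$ that assemble to a stabilizing $\sigma$; the summand then factors completely as $\pbrac{\fix(F\sbrac{\tau})\, p_{\tau}}\pbrac{\fix(G\sbrac{\rho})\, p_{\rho}}$. Here I would invoke the class-function property noted after \cref{def:cycind}: both $\fix(F\sbrac{\tau})$ and $p_{\tau}$ depend only on the cycle type of $\tau$, so $\sum_{\tau \in \symgp{B}} \fix(F\sbrac{\tau})\, p_{\tau}$ depends only on $\abs{B}$ and equals its counterpart taken over the canonical set $\sbrac{\abs{B}}$. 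Writing $Z_{F}^{(k)} = \sum_{\tau \in \symgp{k}} \fix(F\sbrac{\tau})\, p_{\tau}$ so that $\ci{F} = \sum_{k} \frac{1}{k!} Z_{F}^{(k)}$, collecting the $\binom{n}{k}$ subsets $B$ of each size $k$ turns the expression into $\sum_{n} \frac{1}{n!} \sum_{k} \binom{n}{k} Z_{F}^{(k)} Z_{G}^{(n-k)} = \sum_{n} \sum_{k} \frac{Z_{F}^{(k)}}{k!}\, \frac{Z_{G}^{(n-k)}}{(n-k)!}$.

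The final step is simply to recognize this last double sum as the Cauchy product of $\ci{F}$ and $\ci{G}$, yielding $\ci{F \cdot G} = \ci{F} \cdot \ci{G}$. I expect the only real obstacle to be the bookkeeping in the third paragraph---correctly matching the global weight $1/n!$ against the factored weights $1/k!$ and $1/(n-k)!$ while absorbing the $\binom{n}{k}$ choices of block $B$, and making rigorous the passage from sums over $\symgp{B}$ to sums over the canonical symmetric group via functoriality. Everything else is formal manipulation once the fixed-point factorization and the multiplicativity of $p_{\sigma}$ are in hand.
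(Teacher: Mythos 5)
Your proof is correct, and there is nothing in the paper to compare it against: the paper states \cref{thm:specprodci} without proof, treating it as a classical result from the literature (cf.\ \cite{bll:species}), and your argument---fixed points of $(F \cdot G)\sbrac{\sigma}$ factor over $\sigma$-stable decompositions $B \sqcup C$, the multiplicativity $p_{\sigma} = p_{\sigma|_{B}} \, p_{\sigma|_{C}}$, and the final Cauchy-product bookkeeping---is exactly the standard route taken there. The one subtlety you flag, transferring sums over $\symgp{B}$ to the canonical $\symgp{\abs{B}}$, is indeed justified by functoriality (conjugating by any bijection $B \to \sbrac{\abs{B}}$ preserves both cycle type and fixed-point counts), so there is no gap.
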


Conceptually, the species product can be used to describe species that decompose uniquely into substructures of two specified species.
For example, a permutation on a set $A$ decomposes uniquely into a (possibly empty) set of fixed points and a derangement of their complement in $A$.
Thus, $\specname{S} = \specname{E} \cdot \operatorname{Der}$ for $\specname{S}$ the species of permutations, $\specname{E}$ the species of sets, and $\operatorname{Der}$ the species of derangements.

We note also that species multiplication is commutative (up to species isomorphism) and distributes over addition, so the class of species with addition and multiplication forms a commutative semiring, with the species $\numspecname{1}: \begin{cases} \varnothing \mapsto \cbrac{\varnothing} \\ A \neq \varnothing \mapsto \varnothing \end{cases}$ as a multiplicative identity; if addition is completed as previously described, the class of virtual species with addition and multiplication forms a true commutative ring.

In addition, the question of which species can be decomposed as sums and products without resorting to virtual species is one of great interest; the notions of \emph{molecular} and \emph{atomic} species are directly derived from such decompositions, and represent the beginnings of the systematic study of the structure of the class of species as a whole.
Further details on this topic are presented in \cite[\S 2.6]{bll:species}.

\begin{definition}
  \label{def:speccomp}
  For two species $F$ and $G$ with $G \sbrac{\varnothing} = \varnothing$, define their \emph{composition} to be the species $F \circ G$ given by $\pbrac{F \circ G} \sbrac{A} = \prod_{\pi \in P \pbrac{A}} \pbrac{F \sbrac{\pi} \times \prod_{B \in \pi} G \sbrac{B}}$ where $P \pbrac{A}$ is the set of partitions of $A$.
\end{definition}
In other words, the composition $F \circ G$ produces the species of $F$-structures of collections of $G$-structures.
The definition is, again, motivated by a correspondence with a certain operation on cycle indices:
\begin{definition}
  \label{def:cipleth}
  Let $f$ and $g$ be cycle indices. Then the \emph{plethysm} $f \circ g$ is the cycle index
  \begin{equation}
    \label{eq:cipleth}
    f \circ g = f \pbrac{g \pbrac{p_{1}, p_{2}, p_{3}, \dots}, g \pbrac{p_{2}, p_{4}, p_{6}, \dots}, \dots},
  \end{equation}
  where $f \pbrac{a, b, \dots}$ denotes the cycle index $f$ with $a$ substituted for $p_{1}$, $b$ substituted for $p_{2}$, and so on.
\end{definition}
This definition is inherited directly from the theory of symmetric functions in infinitely many variables, where our $p_{i}$ are basis elements, as previously discussed. This operation on cycle indices then corresponds exactly to species composition:
\begin{theorem}
  \label{thm:speccompci}
  For species $F$ and $G$ with $G \sbrac{\varnothing} = \varnothing$, the cycle index of their plethysm is
  \begin{equation}
    \label{eq:speccompci}
    \ci{F \circ G} = \ci{F} \circ \ci{G}
  \end{equation}
  where $\circ$ in the right-hand side is as in \cref{eq:cipleth}.
\end{theorem}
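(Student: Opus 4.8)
The plan is to unwind the definition of $\ci{F \circ G}$ as a fixed-point sum and match it term by term against the plethystic expansion of $\ci{F} \circ \ci{G}$. Since $G \sbrac{\varnothing} = \varnothing$, for each $n$ only finitely many blocks can occur, so every series involved is well-defined and we may compute $\fix \pbrac{\pbrac{F \circ G} \sbrac{\sigma}}$ for a fixed $\sigma \in \symgp{n}$ and then sum over $\sigma$ with the weight $1/n!$. First I would analyze which $\pbrac{F \circ G}$-structures are fixed by the transport of $\sigma$: such a structure is a partition $\pi$ of $\sbrac{n}$ decorated by an $F$-structure on the blocks and a $G$-structure on each block, and it is fixed exactly when $\sigma$ carries $\pi$ to itself as a partition---inducing a permutation $\overline{\sigma}$ of the blocks---the $F$-structure on the blocks is fixed by $F \sbrac{\overline{\sigma}}$, and the family of $G$-structures is carried to itself consistently with $\sigma$.

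Next I would decompose the count over the orbits of $\overline{\sigma}$ acting on the blocks. Consider one such orbit of length $k$, i.e.\ a $k$-cycle of $\overline{\sigma}$ consisting of blocks $B, \sigma B, \dots, \sigma^{k-1} B$; all of these have a common cardinality, the $G$-structures on the orbit are determined freely by the one on $B$, and going around the cycle forces that structure to be fixed by $G \sbrac{\sigma^{k}|_{B}}$. Hence each orbit of length $k$ contributes a factor $\fix \pbrac{G \sbrac{\sigma^{k}|_{B}}}$. The crucial bookkeeping step is a cycle-length lemma: if $\sigma^{k}|_{B}$ has cycle type $\mu$, then $\sigma$ restricted to the union of the $k$ blocks in the orbit has cycle type obtained from $\mu$ by multiplying every part by $k$ (each $j$-cycle of $\sigma^{k}|_{B}$ ``unfolds'' into a single $jk$-cycle of $\sigma$ across the orbit). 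This is precisely the substitution $p_{j} \mapsto p_{jk}$ that appears in the inner argument $\ci{G} \pbrac{p_{k}, p_{2k}, p_{3k}, \dots}$ of the plethysm in \cref{def:cipleth}.

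I would then assemble these contributions. Summing the contribution of a single length-$k$ orbit over all admissible block sizes and internal cycle types, weighted correctly, should produce exactly $g_{k} := \ci{G} \pbrac{p_{k}, p_{2k}, p_{3k}, \dots}$; multiplying over the cycles of $\overline{\sigma}$ and folding in the factor $\fix \pbrac{F \sbrac{\overline{\sigma}}}$ then yields $\fix \pbrac{F \sbrac{\overline{\sigma}}} \prod_{k} g_{k}^{m_{k}}$, where $m_{k}$ is the number of $k$-cycles of $\overline{\sigma}$. Reorganizing the outer sum over $\sigma \in \symgp{n}$ with weight $1/n!$ into a sum over the number $m$ of blocks, over $\overline{\sigma} \in \symgp{m}$ with weight $1/m!$, and over the internal $G$-data then reconstitutes $\ci{F}$ with each $p_{k}$ replaced by $g_{k}$---that is, $\ci{F} \circ \ci{G}$.

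The main obstacle is this last reorganization of normalizing constants: one must show that passing from permutations of the $n$ labels (weighted by $1/n!$) to permutations of the $m$ blocks (weighted by $1/m!$) together with the internal $G$-data introduces neither over- nor under-counting, so that the $k$-cycle contributions collapse exactly to $g_{k}$. I expect to verify this via a Burnside-style orbit count that tracks how many label-permutations $\sigma$ give rise to each choice of block partition, induced block permutation $\overline{\sigma}$, and internal cycle data, confirming that the factorials and multiplicities cancel as required. As an alternative that reduces the bookkeeping, I note that both $F \mapsto \ci{F \circ G}$ and $F \mapsto \ci{F} \circ \ci{G}$ are homomorphisms for species addition and multiplication (using \cref{thm:specsumci,thm:specprodci}, the species identities $\pbrac{F_{1} + F_{2}} \circ G = F_{1} \circ G + F_{2} \circ G$ and $\pbrac{F_{1} \cdot F_{2}} \circ G = \pbrac{F_{1} \circ G} \cdot \pbrac{F_{2} \circ G}$, and the fact that plethysm by a fixed inner argument is a ring homomorphism). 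This lets one reduce the identity to a generating family of species $F$ and thereby simplify the outer sum, though the core orbit computation for $G$ remains.
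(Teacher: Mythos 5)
The paper does not actually prove this theorem: it is one of the classical cycle-index correspondences (sum, product, composition, derivative) that \cref{s:specalg} imports from the literature without proof, the exposition explicitly ``largely tracking'' \cite{bll:species}, where the composition formula is established. So your proposal can only be compared against that standard proof---and it essentially \emph{is} that proof. The decomposition of a $\sigma$-fixed $\pbrac{F \circ G}$-structure into a $\sigma$-stable partition, an $F$-structure fixed by the induced block permutation $\overline{\sigma}$, and $G$-structures carried consistently along each block orbit; the observation that a length-$k$ orbit of blocks contributes a factor $\fix \pbrac{G \sbrac{\sigma^{k}|_{B}}}$; the unfolding lemma (each $j$-cycle of $\sigma^{k}|_{B}$ arises from a single $jk$-cycle of $\sigma$ on the union of the orbit), which is exactly what produces the substitutions $p_{j} \mapsto p_{jk}$ in the inner arguments of the plethysm of \cref{def:cipleth}; and the final Burnside-style recount trading the weight $1/n!$ over $\symgp{n}$ for $1/m!$ over block permutations---these are precisely the steps of the textbook argument, and your identification of the factorial bookkeeping as the step needing the most care is accurate. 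The one weak point is your proposed alternative reduction: species are not generated under sum and product by any convenient family (this is exactly the content of the molecular/atomic decomposition mentioned in \cref{s:specalg}), so ``reducing to a generating family of species $F$'' either collapses back to handling essentially arbitrary $F$, or requires passing to virtual species with rational coefficients, where the meaning of $F \circ G$ itself would need re-justification. Since you offer that only as an optional shortcut, the main line of your argument stands as a correct outline of the standard proof.
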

Many combinatorial structures admit natural descriptions as compositions of species.
For example, every graph admits a unique decomposition as a (possibly empty) set of (nonempty) connected graphs, so we have the species identity $\specname{G} = \specname{E} \circ \specname{G}^{C}$ for $\specname{G}$ the species of graphs and $\specname{G}^{C}$ the species of nonempty connected graphs.

Diligent readers may observe that the requirement that $G \sbrac{\varnothing} = \varnothing$ in \cref{def:speccomp} is in fact logically vacuous, since the given construction would simply ignore the $\varnothing$-structures.
However, the formula in \cref{thm:speccompci} fails to be well-defined for any $\ci{G}$ with non-zero constant term (corresponding to species $G$ with nonempty $G \sbrac{\varnothing}$) unless $\ci{F}$ has finite degree (corresponding to species $F$ with support only in finitely many degrees).
Consider the following example:
\begin{example}
  Let $\specname{E}$ denote the species of sets, $\specname{E}_{3}$ its restriction to sets with three elements, $\numspecname{1}$ the species described above (which has one empty structure), and $X$ the species of singletons (which has one order-$1$ structure).
  If $\specname{E} \pbrac{\numspecname{1} + X}$ were well-defined, it would denote the species of `partially-labeled sets'.
  However, for fixed cardinality $n$, there is an $\specname{E} \pbrac{\numspecname{1} + X}$-structure on $n$ labels \emph{for each nonnegative $k$}---specifically, the set $\sbrac{n}$ together with $k$ unlabeled elements.
  Thus, there would be infinitely many structures of each cardinality for this `species', so it is not in fact a species at all.

  However, the situation for $\specname{E}_{3} \pbrac{\numspecname{1} + X}$ is entirely different.
  A structure in this species is a $3$-set, some of whose elements are labeled.
  There are only four possible such structures: $\cbrac{*, *, *}$, $\cbrac{*, *, 1}$, $\cbrac{*, 1, 2}$, and $\cbrac{1, 2, 3}$, where $*$ denotes an unlabeled element and integers denote labeled elements.
  Moreover, by discarding the unlabeled elements, we can clearly see that $\specname{E}_{3} \pbrac{\numspecname{1} + X} = \sum_{i = 0}^{3} \specname{E}_{i}$.
\end{example}
In our setting, we will not use this alternative notion of composition, so we will not develop it formally here.

Several other binary operations on species are defined in the literature, including the Cartesian product $F \times G$, the functorial composition $F \square G$, and the inner plethysm $F \boxtimes G$ of \cite{travis:inpleth}.
We will not use these here.
However, we do introduce two unary operations: $\pointed{F}$ and $F'$.

\begin{definition}
  \label{def:specderiv}
  For a species $F$, define its \emph{species derivative} to be the species $\deriv{F}$ given by $\deriv{F} \sbrac{A} = F \sbrac{A \cup \cbrac{*}}$ for $*$ an element chosen not in $A$ (say, the set $A$ itself).
\end{definition}
It is important to note that the label $*$ of an $\deriv{F}$-structure is \emph{distinguished} from the other labels; the automorphisms of the species $\deriv{F}$ cannot interchange $*$ with another label.
Thus, species differentiation is appropriate for cases where we want to remove one `position' in a structure.
For example, for $\specname{L}$ the species of linear orders and $\specname{C}$ the species of cyclic orders, we have $\specname{L} = \deriv{\specname{C}}$; a cyclic order on the set $A \cup \cbrac{*}$ is naturally associated with the linear order on the set $A$ produced by removing $*$.
Terming this operation `differentiation' is justified by its effect on cycle indices:
\begin{theorem}
  \label{thm:specderivci}
  For a species $F$, the cycle index of its derivative is given by
  \begin{equation}
    \label{eq:specderivci}
    \civars{\deriv{F}}{p_{1}, p_{2}, \dots} = \frac{\partial}{\partial p_{1}} \civars{F}{p_{1}, p_{2}, \dots}.
  \end{equation}
\end{theorem}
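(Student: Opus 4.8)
The plan is to compute $\ci{\deriv{F}}$ directly from \cref{def:cycind} and match it term-by-term against $\frac{\partial}{\partial p_1}\ci{F}$, organizing the comparison by degree. The first observation is structural: by \cref{def:specderiv}, for $\sigma \in \symgp{n}$ acting on a label set $A$ of size $n$, the transport $\deriv{F}\sbrac{\sigma}$ is exactly $F\sbrac{\hat\sigma}$, where $\hat\sigma$ is the permutation of $A \cup \cbrac{*}$ that agrees with $\sigma$ on $A$ and fixes the distinguished point $*$ (which no automorphism may move). Hence $\fix\pbrac{\deriv{F}\sbrac{\sigma}} = \fix\pbrac{F\sbrac{\hat\sigma}}$, and since $\hat\sigma$ has exactly one more fixed point than $\sigma$, its cycle type satisfies $p_{\hat\sigma} = p_1 \cdot p_\sigma$.

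Next I would record the effect of $\partial/\partial p_1$ on a monomial, namely $\frac{\partial}{\partial p_1} p_\tau = \tau_1\, p_1^{\tau_1 - 1} p_2^{\tau_2}\cdots$, so that the degree-$n$ part of $\frac{\partial}{\partial p_1}\ci{F}$ arises from the degree-$(n+1)$ part of $\ci{F}$ (i.e.\ from $\symgp{n+1}$), with each term weighted by $\tau_1$, the number of fixed points of $\tau$, and with one factor of $p_1$ stripped away. The comparison then reduces, in each degree $n$, to the identity
\begin{equation*}
  \frac{1}{n!}\sum_{\sigma \in \symgp{n}} \fix\pbrac{F\sbrac{\hat\sigma}}\, p_\sigma = \frac{1}{(n+1)!}\sum_{\tau \in \symgp{n+1}} \fix\pbrac{F\sbrac{\tau}}\, \tau_1\, p_1^{\tau_1-1}p_2^{\tau_2}\cdots.
\end{equation*}

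The heart of the argument is a bijection accounting for the factor $\tau_1$. Reading $\tau_1\,(\cdots)$ as a sum over the $\tau_1$ fixed points of $\tau$, the right-hand side becomes a sum over pairs $\pbrac{\tau, a}$ with $a$ a fixed point of $\tau \in \symgp{n+1}$. Deleting the label $a$ and relabeling $\sbrac{n+1}\setminus\cbrac{a}$ order-preservingly onto $\sbrac{n}$ sends such a pair to a permutation $\sigma \in \symgp{n}$; conversely each $\sigma$ together with a choice of $a \in \sbrac{n+1}$ recovers a unique pair, so the construction is a bijection onto $\symgp{n} \times \sbrac{n+1}$ and exactly $n+1$ pairs lie over each $\sigma$. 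Because $\fix$ is a class function (as noted after \cref{def:cycind}) and $\hat\sigma$ has the same cycle type as the corresponding $\tau$, we have $\fix\pbrac{F\sbrac{\tau}} = \fix\pbrac{F\sbrac{\hat\sigma}}$ and $p_1^{\tau_1 - 1}p_2^{\tau_2}\cdots = p_\sigma$; summing over the $n+1$ overlying pairs supplies precisely the factor $n+1$ that converts $1/(n+1)!$ into $1/n!$, establishing the displayed identity and hence the theorem.

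The step I expect to require the most care is this bookkeeping of the multiplicity $\tau_1$: one must verify that marking a fixed point of $\tau$ and relabeling is genuinely a bijection onto $\symgp{n} \times \sbrac{n+1}$ rather than merely a surjection, and that the class-function invariance of $\fix$ is invoked correctly so that the weight transfers unchanged from $\fix\pbrac{F\sbrac{\tau}}$ to $\fix\pbrac{F\sbrac{\hat\sigma}}$. Everything else is routine substitution together with the definitional identification of $\deriv{F}\sbrac{\sigma}$ with $F\sbrac{\hat\sigma}$.
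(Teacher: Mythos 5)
Your proof is correct: the identification $\deriv{F}\sbrac{\sigma} = F\sbrac{\hat\sigma}$, the reading of the factor $\tau_1$ produced by $\partial/\partial p_1$ as a choice of marked fixed point, and the bijection between such marked pairs $\pbrac{\tau, a}$ and $\symgp{n} \times \sbrac{n+1}$ (with $\fix$ transferring unchanged because it is a class function, valid even across different label sets of the same size by functoriality) all check out, and the count of $n+1$ pairs over each $\sigma$ supplies exactly the factor needed to reconcile $1/(n+1)!$ with $1/n!$. Note that the paper itself gives no proof of \cref{thm:specderivci}---it is stated as a classical identity with proofs deferred to \cite{bll:species}---and your argument is essentially the standard fixed-point-marking computation found there.
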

We note that we cannot in general recover $\ci{F}$ from $\ci{\deriv{F}}$, since there may be terms in $\ci{F}$ which have no $p_{1}$-component (corresponding to $F$-structures which have no automorphisms with fixed points).

Finally, we introduce a variant of the species derivative which allows us to \emph{label} the distinguished element $*$:
\begin{definition}
  \label{def:specpoint}
  For a species $F$, define its \emph{pointed species} to be the species $\pointed{F}$ given by $\pointed{F} \sbrac{A} = F \sbrac{A} \times A$ (that is, pairs of the form $\pbrac{f, a}$ where $f$ is an $F$-structure on $A$ and $a \in A$) with transport $\pointed{F} \sbrac{\sigma} \pbrac{f, a} = \pbrac{F \sbrac{\sigma} \pbrac{f}, \sigma \pbrac{a}}$.
  We can also write $\pointed{F} \sbrac{A} = X \cdot \deriv{F}$ for $X$ the species of singletons.
\end{definition}
In other words, an $\pointed{F} \sbrac{A}$-structure is an $F \sbrac{A}$-structure with a distinguished element taken from the set $A$ (as opposed to $\deriv{F}$, where the distinguished element is new).
Thus, species pointing is appropriate for cases such as those of rooted trees: for $\mathfrak{a}$ the species of trees and $\specname{A}$ the species of rooted trees, we have $\specname{A} = \pointed{\mathfrak{a}}$.
\Cref{eq:specderivci} leads directly to the following:
\begin{theorem}
  \label{thm:specpointci}
  For a species $F$, the cycle index of its corresponding pointed species is given by
  \begin{equation}
    \label{eq:specpointci}
    \ci{\pointed{F}} = \ci{X} \cdot \ci{\deriv{F}}.
  \end{equation}
\end{theorem}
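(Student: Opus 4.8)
The plan is to deduce the statement from the product formula for cycle indices, \cref{thm:specprodci}, by taking seriously the decomposition $\pointed{F} = X \cdot \deriv{F}$ recorded in \cref{def:specpoint}. Concretely, I would first promote that decomposition from a suggestive notation to an honest species isomorphism $\pointed{F} \cong X \cdot \deriv{F}$, and then simply apply \cref{thm:specprodci} together with the fact that cycle indices are invariant under species isomorphism to obtain $\ci{\pointed{F}} = \ci{X \cdot \deriv{F}} = \ci{X} \cdot \ci{\deriv{F}}$.

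To build the isomorphism, I would unwind both sides. An $\pbrac{X \cdot \deriv{F}}$-structure on $A$ is a partition $A = B \sqcup C$ with an $X$-structure on $B$ and a $\deriv{F}$-structure on $C$; since $X \sbrac{B}$ is nonempty only when $\abs{B} = 1$, this forces $B = \cbrac{a}$ for a unique $a \in A$, and a $\deriv{F}$-structure on $C = A \setminus \cbrac{a}$ is by definition an $F$-structure on $\pbrac{A \setminus \cbrac{a}} \cup \cbrac{*}$. An $\pointed{F}$-structure on $A$ is a pair $\pbrac{f, a}$ with $f \in F \sbrac{A}$ and $a \in A$. I would define the comparison map by sending $\pbrac{f, a}$ to the datum consisting of the point $a$ together with $F \sbrac{\tau_{a}} \pbrac{f}$, where $\tau_{a} \colon A \to \pbrac{A \setminus \cbrac{a}} \cup \cbrac{*}$ is the bijection fixing $A \setminus \cbrac{a}$ pointwise and sending $a \mapsto *$; this is a bijection on structures for each fixed $A$, with inverse supplied by $\tau_{a}^{-1}$.

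The main work, and the only real obstacle, is checking that this family of bijections is natural --- that it commutes with transport along an arbitrary bijection $\sigma$. The subtlety is that the pointed label $a$ travels with $\sigma$ while the distinguished label $*$ on the derivative side is inert, so one must verify that the relabelings intertwine correctly; explicitly, this reduces to the identity $\hat{\sigma} \circ \tau_{a} = \tau_{\sigma \pbrac{a}} \circ \sigma$, where $\hat{\sigma}$ is the extension of $\sigma$ fixing $*$, which is immediate by checking it on $A \setminus \cbrac{a}$ and on $a$ separately. Once naturality is in hand, the cycle index identity follows as above.

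As an independent check (and an alternative proof avoiding the isomorphism), I would compute $\fix \pbrac{\pointed{F} \sbrac{\sigma}}$ directly: a pair $\pbrac{f, a}$ is fixed exactly when $F \sbrac{\sigma} \pbrac{f} = f$ and $\sigma \pbrac{a} = a$, so that $\fix \pbrac{\pointed{F} \sbrac{\sigma}} = \sigma_{1} \cdot \fix \pbrac{F \sbrac{\sigma}}$ with $\sigma_{1}$ the number of fixed points of $\sigma$. Substituting this into \cref{eq:cycinddef} and recognizing the factor $\sigma_{1} p_{\sigma}$ as $p_{1} \frac{\partial}{\partial p_{1}} p_{\sigma}$ recovers $\ci{\pointed{F}} = p_{1} \cdot \frac{\partial}{\partial p_{1}} \ci{F}$, which agrees with $\ci{X} \cdot \ci{\deriv{F}}$ since $\ci{X} = p_{1}$ and $\ci{\deriv{F}} = \frac{\partial}{\partial p_{1}} \ci{F}$ by \cref{thm:specderivci}.
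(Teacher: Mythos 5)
Your proposal is correct and follows essentially the same route as the paper: the paper likewise reads the identity $\pointed{F} = X \cdot \deriv{F}$ off of \cref{def:specpoint} and obtains \cref{eq:specpointci} directly from \cref{thm:specprodci,thm:specderivci}. Your naturality verification and the fixed-point computation $\fix \pbrac{\pointed{F} \sbrac{\sigma}} = \sigma_{1} \cdot \fix \pbrac{F \sbrac{\sigma}}$ simply make explicit what the paper treats as immediate.
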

Note that, again, we cannot in general recover $\ci{F}$ from $\ci{\pointed{F}}$, for the same reasons as in the case of $\ci{\deriv{F}}$.

\section{Multisort species}\label{s:mult}
A species $F$ as defined in \cref{def:species} is a functor $F: \catname{FinBij} \to \catname{FinSet}$; an $F$-structure in $F \sbrac{A}$ takes its labels from the set $A$.
The tool-set so produced is adequate to describe many classes of combinatorial structures.
However, there is one particular structure type which it cannot effectively capture: the notion of distinct \emph{sorts} of elements within a structure.
Perhaps the most natural example of this is the case of $k$-colored graphs, where every vertex has one of $k$ colors with the requirement that no pair of adjacent vertices shares a color.
Automorphisms of such a graph must preserve the colorings of the vertices, which is not a natural restriction to impose in the calculation of the classical cycle index in \cref{eq:cycinddef}.
We thus incorporate the notion of sorts directly into a new definition:
\begin{definition} 
  \label{def:ksortset}
  For a fixed integer $k \geq 1$, define a \emph{$k$-sort set} to be an ordered $k$-tuple of sets.
  Say that a $k$-sort set is \emph{finite} if each component set is finite; in that case, its \emph{$k$-sort cardinality} is the ordered tuple of its components' set cardinalities.
  Further, define a \emph{$k$-sort function} to be an ordered $k$-tuple of set functions which acts componentwise on $k$-sort sets.
  For two $k$-sort sets $U$ and $V$, a $k$-sort function $\sigma$ is a \emph{$k$-sort bijection} if each component is a set bijection.
  For $k$-sort sets of cardinality $\pbrac{c_{1}, c_{2}, \dots, c_{k}}$, denote by $\symgp{c_{1}, c_{2}, \dots, c_{k}} = \symgp{c_{1}} \times \symgp{c_{2}} \times \dots \times \symgp{c_{k}}$ the \emph{$k$-sort symmetric group}, the elements of which are in natural bijection with $k$-sort bijections from a $k$-sort set to itself.
  Finally, denote by $\catname{FinBij}^{k}$ the category of finite $k$-sort sets with $k$-sort bijections.
\end{definition}

We can then define an extension of species to the context of $k$-sort sets:
\begin{definition}
  \label{def:multisort}
  A \emph{$k$-sort species} $F$ is a functor $F: \catname{FinBij}^{k} \to \catname{FinBij}$ which associates to each $k$-sort set $U$ a set $F \sbrac{U}$ of \emph{$k$-sort $F$-structures} and to each $k$-sort bijection $\sigma: U \to V$ a bijection $F \sbrac{\sigma}: F \sbrac{U} \to F \sbrac{V}$.
\end{definition}
Functorality once again imposes naturality conditions on these associations.

Just as in the theory of ordinary species, to each multisort species is associated a power series, its \emph{cycle index}, which carries essential combinatorial data about the automorphism structure of the species.
To keep track of the multiple sorts of labels, however, we require multiple sets of indeterminates.
Where in ordinary cycle indices we simply used $p_{i}$ for the $i$th indeterminate, we now use $p_{i} \sbrac{j}$ for the $i$th indeterminate of the $j$th sort.
In some contexts with small $k$, we will denote our sorts with letters (saying, for example, that we have `$X$ labels' and `$Y$ labels'), in which case we will write $p_{i} \sbrac{x}$, $p_{i} \sbrac{y}$, and so forth.
In natural analogy to \cref{def:cycind}, the formula for the cycle index of a $k$-sort species $F$ is given by
\begin{multline}
  \label{eq:multcycinddef}
  \civars{F}{p_{1} \sbrac{1}, p_{2} \sbrac{1}, \dots; p_{1} \sbrac{2}, p_{2} \sbrac{2}, \dots; \dots; p_{1} \sbrac{k}, p_{2} \sbrac{k}, \dots} = \\
  \sum_{\substack{n \geq 0 \\ a_{1} + a_{2} + \dots + a_{k} = n}} \frac{1}{a_{1}! a_{2}! \dots a_{k}!} \sum_{\sigma \in \symgp{a_{1}, a_{2}, \dots, a_{k}}} \fix \pbrac{F \sbrac{\sigma}} p^{\sigma_{1}}_{\sbrac{1}} p^{\sigma_{2}}_{\sbrac{2}} \dots p^{\sigma_{k}}_{\sbrac{k}}.
\end{multline}
where by $p^{\sigma_{i}}_{\sbrac{i}}$ we denote the product $\prod_{j} \pbrac{p_{j} \sbrac{i}}^{\pbrac{\sigma_{i}}_{j}}$ where $\pbrac{\sigma_{i}}_{j}$ is the number of $j$-cycles of $\sigma_{i}$.

The operations of addition and multiplication extend to the multisort context naturally.
To make sense of differentiation and pointing, we need only specify a sort from which to draw the element or label which is marked; we then write $\deriv[X]{F}$ and $\pointed[X]{F}$ for the derivative and pointing respectively of $F$ `in the sort $X$', which is to say with its distinguished element drawn from that sort.
When $F$ is a $1$-sort species and $G$ a $k$-sort species, the construction of the $k$-sort species $F \circ G$ is natural; in other settings, we will not define a general notion of composition of multisort species.

\section{$\Gamma$-species and quotient species}\label{s:quot}
It is frequently the case that interesting combinatorial problems admit elegant descriptions in terms of quotients of a class of structures $F$ under the action of a group $\Gamma$.
In some cases, this group action will be \emph{structural} in the sense that it commutes with permutations of labels in the species $F$, or, informally, that it is independent of the choice of labelings on each $F$-structure.
In such a case, we may also say that $\Gamma$ acts on `unlabeled structures' of the class $F$.

\begin{example}
  \label{ex:graphcomp}
  Let $\specname{G}$ denote the species of simple graphs.
  Let the group $\symgp{2}$ act on such graphs by letting the identity act trivially and letting the non-trivial element $\pmt{(12)}$ send each graph to its complement (that is, by replacing each edge with a non-edge and each non-edge with an edge).
  This `complementation action' is structural in the sense described previously.
\end{example}

We note that a group action is structural is exactly the condition that each $\gamma \in \Gamma$ acts by a species isomorphism $\gamma: F \to F$ in the sense of \cref{def:specmaptypes}.

We now incorporate such species-compatible actions into a new definition:
\begin{definition}
  \label{def:gspecies}
  For $\Gamma$ a group, a \emph{$\Gamma$-species} $F$ is a combinatorial species $F$ together with an action of $\Gamma$ on $F$-structures by species isomorphisms.
  Explicitly, for $F$ a $\Gamma$-species, the diagram
  \begin{equation*}
    \begin{tikzpicture}[every node/.style={fill=white}]
      \matrix (m) [matrix of math nodes, row sep=4em, column sep=4em, text height=1.5ex, text depth=0.25ex]
      {
        A & F \sbrac{A} & F \sbrac{A} \\
        B & F \sbrac{B}  & F \sbrac{B} \\
      };
      \path[->,font=\scriptsize]
      (m-1-1) edge node {$F$} (m-1-2)
      edge node {$\sigma$} (m-2-1)
      (m-1-2) edge node {$\gamma_{A}$} (m-1-3)
      edge node {$F \sbrac{\sigma}$} (m-2-2)
      (m-1-3) edge node {$F \sbrac{\sigma}$} (m-2-3)
      (m-2-1) edge node {$F$} (m-2-2)
      (m-2-2) edge node {$\gamma_{B}$} (m-2-3);
    \end{tikzpicture}
  \end{equation*}
  commutes for every $\gamma \in \Gamma$ and every set bijection $\sigma: A \to B$.
  (Note that commutativity of the left square is required for $F$ to be a species at all.)
\end{definition}

$\specname{G}$ is then a $\symgp{2}$-species with the action described in \cref{ex:graphcomp}.

For such a $\Gamma$-species, of course, it is then meaningful to pass to the quotient under the action by $\Gamma$:
\begin{definition}
  \label{def:qspecies}
  For $F$ a $\Gamma$-species, define $\nicefrac{F}{\Gamma}$, the \emph{quotient species} of $F$ under the action of $\Gamma$, to be the species of $\Gamma$-orbits of $F$-structures.
\end{definition}

\begin{example}
  \label{ex:graphcompquot}
  Consider $\specname{G}$ as a $\symgp{2}$-species in light of the action defined in \cref{ex:graphcomp}.
  The structures of the quotient species $\nicefrac{\specname{G}}{\symgp{2}}$ are then pairs of complementary graphs.
  We may choose to interpret each such pair as representing a $2$-partition of the set of vertex pairs of the complete graph (that is, of edges of the complete graph).
  More natural examples of quotient structures will present themselves in later chapters.
\end{example}

For each label set $A$, let $\quomap{\Gamma} \sbrac{A}: F \sbrac{A} \to \nicefrac{F}{\Gamma} \sbrac{A}$ denote the map sending each $F$-structure over $A$ to its quotient $\nicefrac{F}{\Gamma}$-structure over $A$.
Then $\quomap{\Gamma} \sbrac{A}$ is an injection for each $A$, and the requirement that $\Gamma$ acts by natural transformations implies that the induced functor map $\quomap{\Gamma}: F \to \nicefrac{F}{\Gamma}$ is a natural transformation.
Thus, the passage from $F$ to $\nicefrac{F}{\Gamma}$ is a species cover in the sense of \cref{def:specmaptypes}.

A brief exposition of the notion of quotient species may be found in \cite[\S 3.6]{bll:species}, and a more thorough exposition (in French) in \cite{bous:species}.
Our motivation, of course, is that combinatorial structures of a given class are often `naturally' identified with orbits of structures of another, larger class under the action of some group.
Our goal will be to compute the cycle index of the species $\nicefrac{F}{\Gamma}$ in terms of that of $F$ and information about the $\Gamma$-action, so that enumerative data about the quotient species can be extracted.

As an intermediate step to the computation of the cycle index associated to this quotient species, we associate a cycle index to a $\Gamma$-species $F$ that keeps track of the needed data about the $\Gamma$-action.
\begin{definition}
  \label{def:gcycind}
  For a $\Gamma$-species $F$, define the $\Gamma$-cycle index $\gci{\Gamma}{F}$ as in \cite{hend:specfield}: for each $\gamma \in \Gamma$, let
  \begin{equation}
    \gcivars{\Gamma}{F}{\gamma} = \sum_{n \geq 0} \frac{1}{n!} \sum_{\sigma \in \symgp{n}} \fix \pbrac{\gamma \cdot F \sbrac{\sigma}} p_{\sigma} \label{eq:gcycinddef}
  \end{equation}
  with $p_{\sigma}$ as in \cref{eq:cycinddef}.
\end{definition}

We will call such an object (formally a map from $\Gamma$ to the ring $\ringname{Q} \sbrac{\sbrac{p_{1}, p_{2}, \dots}}$ of symmetric functions with rational coefficients in the $p$-basis) a \emph{$\Gamma$-cycle index} even when it is not explicitly the $\Gamma$-cycle index of a $\Gamma$-species, and we will sometimes call $\gcielt{\Gamma}{F}{\gamma}$ the ``$\gamma$ term of $\gci{\Gamma}{F}$''.
So the coefficients in the power series count the fixed points of the \emph{combined} action of a permutation and the group element $\gamma$.
Note that, in particular, the classical (`ordinary') cycle index may be recovered as $\ci{F} = \gcielt{\Gamma}{F}{e}$ for any $\Gamma$-species $F$.

The algebraic relationships between ordinary species and their cycle indices generally extend without modification to the $\Gamma$-species context, as long as appropriate allowances are made.
The actions on cycle indices of $\Gamma$-species addition and multiplication are exactly as in the ordinary species case considered componentwise:
\begin{definition}
  \label{def:gspecsumprod}
  For two $\Gamma$-species $F$ and $G$, the $\Gamma$-cycle index of their sum $F + G$ is given by
  \begin{equation}
    \label{eq:gspecsum}
    \gcielt{\Gamma}{F + G}{\gamma} = \gcielt{\Gamma}{F}{\gamma} + \gcielt{\Gamma}{G}{\gamma}
  \end{equation}
  and the $\Gamma$-cycle index of their product $F \cdot G$ is given by
  \begin{equation}
    \label{eq:gspecprod}
    \gcielt{\Gamma}{F \cdot G}{\gamma} = \gcielt{\Gamma}{F}{\gamma} \cdot \gcielt{\Gamma}{G}{\gamma}
  \end{equation}
\end{definition}
The action of composition, which in ordinary species corresponds to plethysm of cycle indices, can also be extended:
\begin{definition}
  \label{def:gspeccomp}
  For two $\Gamma$-species $F$ and $G$, define their \emph{composition} to be the $\Gamma$-species $F \circ G$ with structures given by $\pbrac{F \circ G} \sbrac{A} = \prod_{\pi \in P \pbrac{A}} \pbrac{F \sbrac{\pi} \times \prod_{B \in \pi} G \sbrac{B}}$ where $P \pbrac{A}$ is the set of partitions of $A$ and where $\gamma \in \Gamma$ acts on a $\pbrac{F \circ G}$-structure by acting on the $F$-structure and the $G$-structures independently.
\end{definition}
The requirement in \cref{def:gspecies} that the action of $\Gamma$ commutes with transport implies that this is well-defined.
Informally, for $\Gamma$-species $F$ and $G$, we have defined the composition $F \circ G$ to be the $\Gamma$-species of $F$-structures of $G$-structures, where $\gamma \in \Gamma$ acts on an $\pbrac{F \circ G}$-structure by acting independently on the $F$-structure and each of its associated $G$-structures.
A formula similar to that \cref{thm:speccompci} requires a definition of the plethysm of $\Gamma$-symmetric functions, here taken from \cite[\S 3]{hend:specfield}:
\begin{definition}
  \label{def:gcipleth}
  For two $\Gamma$-cycle indices $f$ and $g$, their \emph{plethysm} $f \circ g$ is a $\Gamma$-cycle index defined by
  \begin{equation}
    \pbrac{f \circ g} \pbrac{\gamma} = f \pbrac{\gamma} \pbrac{g \pbrac{\gamma} \pbrac{p_{1}, p_{2}, p_{3}, \dots}, g \pbracs[big]{\gamma^{2}} \pbrac{p_{2}, p_{4}, p_{6}, \dots}, \dots}.
    \label{eq:gcipleth}
  \end{equation}
\end{definition}
This definition of $\Gamma$-cycle index plethysm is then indeed the correct operation to pair with the composition of $\Gamma$-species:
\begin{theorem}[Theorem 3.1, \cite{hend:specfield}]
  \label{thm:gspeccompci}
  If $A$ and $B$ are $\Gamma$-species and $B \pbrac{\varnothing} = \varnothing$, then
  \begin{equation}
    \label{eq:gspeccompci}
    \gci{\Gamma}{A \circ B} = \gci{\Gamma}{A} \circ \gci{\Gamma}{B}.
  \end{equation}
\end{theorem}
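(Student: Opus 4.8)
The plan is to prove the identity one $\gamma$-term at a time and to reduce everything to a fixed-point count, mirroring the proof of the ordinary composition formula \cref{thm:speccompci} while tracking carefully how powers of $\gamma$ enter. Fix $\gamma \in \Gamma$. Since a $\Gamma$-cycle index is determined by its values on group elements, it suffices to show that the $\gamma$-terms of the two sides of \cref{eq:gspeccompci} agree. Unwinding the left side via \cref{eq:gcycinddef}, the quantity $\gcielt{\Gamma}{A \circ B}{\gamma}$ is the sum over $n$ and over $\sigma \in \symgp{n}$ of $\fix \pbrac{\gamma \cdot \pbrac{A \circ B} \sbrac{\sigma}} \, p_{\sigma} / n!$, so the heart of the matter is to compute $\fix \pbrac{\gamma \cdot \pbrac{A \circ B} \sbrac{\sigma}}$ for a single $\sigma$. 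Recall that an $\pbrac{A \circ B}$-structure on $\sbrac{n}$ is a partition $\pi$ of $\sbrac{n}$, an $A$-structure on the set of blocks, and a $B$-structure on each block, and that $\gamma$ acts independently on the $A$-part and on each $B$-part.

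First I would decompose the fixed-point condition orbit by orbit. Such a structure is fixed by $\gamma \cdot \pbrac{A \circ B} \sbrac{\sigma}$ precisely when $\sigma$ preserves $\pi$ (inducing a permutation $\bar\sigma$ of the blocks), the $A$-structure on the blocks is fixed by $\gamma \cdot A \sbrac{\bar\sigma}$, and the tuple of $B$-structures is carried to itself. Grouping the blocks into orbits under $\bar\sigma$, consider one orbit consisting of $\ell$ blocks cyclically permuted by $\bar\sigma$. The union of these $\ell$ blocks is a union of $\sigma$-cycles; each block is stabilized setwise by $\sigma^{\ell}$; and---this is the crucial point---traversing the orbit once returns the representative block to itself while applying $\gamma$ a total of $\ell$ times. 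Hence the $B$-structure on a chosen representative block must be a fixed point of $\gamma^{\ell} \cdot B \sbrac{\tau}$, where $\tau = \sigma^{\ell}$ restricted to that block.

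Next I would translate this local count into symmetric functions. For an orbit of $\ell$ blocks, a $j$-cycle of $\tau$ inside the representative block corresponds to an $\pbrac{\ell j}$-cycle of $\sigma$ on the whole label set, so each variable $p_{j}$ occurring in $\gcielt{\Gamma}{B}{\gamma^{\ell}}$ is replaced by $p_{\ell j}$. Thus the total $B$-contribution of a single $\bar\sigma$-orbit of length $\ell$ is exactly $\gcielt{\Gamma}{B}{\gamma^{\ell}} \pbrac{p_{\ell}, p_{2\ell}, p_{3\ell}, \dots}$. Summing over all configurations---that is, over all ways the $A$-structure can be distributed across the orbits, weighted by the $A$-contribution---reproduces $\gcielt{\Gamma}{A}{\gamma}$ evaluated with its $\ell$-th variable set equal to $\gcielt{\Gamma}{B}{\gamma^{\ell}} \pbrac{p_{\ell}, p_{2\ell}, \dots}$, which is precisely the right-hand side of \cref{eq:gcipleth}.

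The main obstacle is the bookkeeping in the previous two paragraphs: one must verify rigorously that $\gamma \cdot \pbrac{A \circ B} \sbrac{\sigma}$, restricted to a single $\bar\sigma$-orbit of $\ell$ blocks, forces the representative $B$-structure to be fixed by $\gamma^{\ell} \cdot B \sbrac{\sigma^{\ell}}$ rather than merely by $\gamma \cdot B \sbrac{\sigma^{\ell}}$. The factor $\gamma^{\ell}$ is exactly what distinguishes $\Gamma$-plethysm in \cref{eq:gcipleth} from ordinary plethysm, and tracking it correctly requires following the independent $\Gamma$-action on each block around the full cycle of blocks. Once this is established, the remaining algebra---the $1/n!$ normalizations, the enumeration of cycle types, and the exponential-formula bookkeeping---is insensitive to the extra group element and is formally identical to the proof of \cref{thm:speccompci}. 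In fact one can organize the argument so that, for each fixed $\gamma$, it collapses to the ordinary composition identity applied to the series $\gcielt{\Gamma}{A}{\gamma}$ and $\gcielt{\Gamma}{B}{\gamma^{\ell}}$, with the substitution $\gamma \mapsto \gamma^{\ell}$ inserted in the $\ell$-th slot.
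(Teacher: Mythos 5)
The paper offers no proof for you to be compared against: \cref{thm:gspeccompci} is imported verbatim, with citation, as Theorem 3.1 of \cite{hend:specfield}, so your proposal must be judged on its own merits. It is correct, and it is the natural $\Gamma$-equivariant refinement of the proof of \cref{thm:speccompci}: decompose a fixed $\pbrac{A \circ B}$-structure along the $\bar{\sigma}$-orbits of blocks, observe that on an orbit of length $\ell$ the tuple of $B$-structures is determined by the one on a representative block, which must be fixed by $\gamma^{\ell} \cdot B\sbrac{\sigma^{\ell}}$ restricted to that block, and then translate each $j$-cycle of $\sigma^{\ell}$ on the representative block into an $\ell j$-cycle of $\sigma$, giving exactly the substitution $p_{j} \mapsto p_{\ell j}$ and the slot assignment $g\pbrac{\gamma^{\ell}}\pbrac{p_{\ell}, p_{2\ell}, \dots}$ appearing in \cref{eq:gcipleth}. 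One improvement: the step you label ``the main obstacle'' and leave as something to be ``verified rigorously'' is in fact a one-line consequence of the definitions, and you should write that line. Around the cycle of blocks $b_{0}, b_{1} = \bar{\sigma}\pbrac{b_{0}}, \dots, b_{\ell - 1}$, the fixed-point condition composes to $\pbrac{\gamma \cdot B\sbrac{\sigma|_{b_{\ell-1}}}} \circ \dots \circ \pbrac{\gamma \cdot B\sbrac{\sigma|_{b_{0}}}}$ acting on the representative structure; since \cref{def:gspecies} requires the action of each $\gamma$ to commute with transport, the $\ell$ copies of $\gamma$ collect into a single $\gamma^{\ell}$, while functoriality composes the transports into $B\sbrac{\sigma^{\ell}|_{b_{0}}}$, so the composite is $\gamma^{\ell} \cdot B\sbrac{\sigma^{\ell}|_{b_{0}}}$ as you claim. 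With that line inserted, the remaining normalization and cycle-type bookkeeping is, as you say, formally identical to the ordinary case, and your argument is complete.
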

Thus, $\Gamma$-species admit the same sorts of `nice' correspondences between structural descriptions (in terms of functorial algebra) and enumerative characterizations (in terms of cycle indices) that ordinary species do.

However, to make use of this theory for enumerative purposes, we also need to be able to pass from the $\Gamma$-cycle index of a $\Gamma$-species to the ordinary cycle index of its associated quotient species under the action of $\Gamma$.
This will allow us to adopt a useful strategy: if we can characterize some difficult-to-enumerate combinatorial structure as quotients of more accessible structures, we will be able to apply the full force of species theory to the enumeration of the prequotient structures, \emph{then} pass to the quotient when it is convenient.
Exactly this approach will serve as the core of both of the following chapters.

Since we intend to enumerate orbits under a group action, we apply a generalization of Burnside's Lemma found in \cite[Lemma 5]{gessel:laginvspec}:
\begin{lemma}
  \label{lem:grouporbits}
  If $\Gamma$ and $\Delta$ are finite groups and $S$ a set with a $\pbrac{\Gamma \times \Delta}$-action, for any $\delta \in \Delta$ the number of $\Gamma$-orbits fixed by $\delta$ is $\frac{1}{\abs{\Gamma}} \sum_{\gamma \in \Gamma} \fix \pbrac{\gamma, \delta}$.
\end{lemma}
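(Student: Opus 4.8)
The plan is to establish this by the same double-counting device that proves Burnside's Lemma, to which the statement reduces when $\delta = e$. Because $\Gamma \times \Delta$ acts on $S$, the restrictions to $\Gamma \times \cbrac{e}$ and $\cbrac{e} \times \Delta$ give commuting actions of $\Gamma$ and $\Delta$ on $S$; I will write $\gamma \cdot s$ and $\delta \cdot s$ for these, so that $\pbrac{\gamma, \delta} \cdot s = \gamma \cdot \pbrac{\delta \cdot s}$. In particular $\Delta$ permutes the set $X$ of $\Gamma$-orbits, and a $\Gamma$-orbit $O$ is fixed by $\delta$ exactly when $\delta \cdot O = O$ as a set. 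The quantity to compute is $\abs{\operatorname{Fix}_{X} \pbrac{\delta}}$, the number of such $\delta$-fixed orbits.

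First I would form the incidence set
\[
  T = \cbrac{\pbrac{s, \gamma} \in S \times \Gamma : \pbrac{\gamma, \delta} \cdot s = s}
\]
and count it two ways. Summing over $\gamma$ is immediate: for each fixed $\gamma$ the number of $s$ satisfying $\pbrac{\gamma, \delta} \cdot s = s$ is by definition $\fix \pbrac{\gamma, \delta}$, so $\abs{T} = \sum_{\gamma \in \Gamma} \fix \pbrac{\gamma, \delta}$, which is $\abs{\Gamma}$ times the right-hand side of the claim.

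The main work is the second count, summing over $s$ grouped by $\Gamma$-orbit $O$. The defining condition $\gamma \cdot \pbrac{\delta \cdot s} = s$ forces $\delta \cdot s$ and $s$ into the same $\Gamma$-orbit, so any orbit not fixed by $\delta$ contributes nothing. For an orbit $O$ that is fixed by $\delta$, each $s \in O$ has $\delta \cdot s \in O$, hence at least one $\gamma$ carries $\delta \cdot s$ to $s$; the full set of such $\gamma$ is a coset of $\operatorname{Stab}_{\Gamma} \pbrac{\delta \cdot s}$ and therefore has $\abs{\operatorname{Stab}_{\Gamma} \pbrac{s}}$ elements. Summing over $s \in O$ and invoking the orbit--stabilizer relation $\abs{O} \cdot \abs{\operatorname{Stab}_{\Gamma} \pbrac{s}} = \abs{\Gamma}$ shows that every $\delta$-fixed orbit contributes exactly $\abs{\Gamma}$, whence $\abs{T} = \abs{\Gamma} \cdot \abs{\operatorname{Fix}_{X} \pbrac{\delta}}$. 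Equating the two counts and dividing by $\abs{\Gamma}$ gives the formula.

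The only delicate point is the bookkeeping in this second count: I must check that ``fixed by $\delta$'' is well-defined on orbits independently of the chosen representative, and that the coset argument genuinely yields $\abs{\operatorname{Stab}_{\Gamma} \pbrac{s}}$ (using that stabilizers of elements in one orbit are conjugate, hence equinumerous). Once those are verified, the orbit--stabilizer collapse makes each fixed orbit contribute the uniform weight $\abs{\Gamma}$, and the finiteness of $\Gamma$ justifies the division.
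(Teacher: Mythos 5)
Your proof is correct and complete: the incidence count $\abs{T} = \sum_{\gamma \in \Gamma} \fix\pbrac{\gamma,\delta}$, the observation that only $\delta$-fixed orbits contribute, the coset argument giving $\abs{\operatorname{Stab}_{\Gamma}\pbrac{\delta \cdot s}} = \abs{\operatorname{Stab}_{\Gamma}\pbrac{s}}$ many $\gamma$ per point, and the orbit--stabilizer collapse to a uniform contribution of $\abs{\Gamma}$ per fixed orbit all check out, and the well-definedness points you flag are exactly the ones that need the commutativity of the two restricted actions. For comparison: the paper does not prove this lemma at all---it imports it as Lemma 5 from the cited work of Gessel---so your double-counting argument, which is the natural weighted generalization of the standard proof of Burnside's Lemma (and specializes to it at $\delta = e$), supplies a self-contained justification for a step the paper leaves to the literature.
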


Recall from \cref{eq:cycinddef} that, to compute the cycle index of a species, we need to enumerate the fixed points of each $\sigma \in \symgp{n}$.
However, to do this in the quotient species $\nicefrac{F}{\Gamma}$ is by definition to count the fixed $\Gamma$-orbits of $\sigma$ in $F$ under commuting actions of $\symgp{n}$ and $\Gamma$ (that is, under an $\pbrac{\symgp{n} \times \Gamma}$-action).
Thus, \cref{lem:grouporbits} implies the following:
\begin{theorem}\label{thm:qsci}
  For a $\Gamma$-species $F$, the ordinary cycle index of the quotient species $\nicefrac{F}{\Gamma}$ is given by 
  \begin{equation}
    \label{eq:quotcycind}
    \ci{F / \Gamma} = \qgci{\Gamma}{F} \defeq \frac{1}{\abs{\Gamma}} \sum_{\gamma \in \Gamma} \gcielt{\Gamma}{F}{\gamma} = \frac{1}{\abs{\Gamma}} \sum_{\substack{n \geq 0 \\ \sigma \in \symgp{n} \\ \gamma \in \Gamma}} \frac{1}{n!} \pbrac{\gamma \cdot F \sbrac{\sigma}} p_{\sigma}.
  \end{equation}
  where we define $\qgci{\Gamma}{F} = \frac{1}{\abs{\Gamma}} \sum_{\gamma \in \Gamma} \gcielt{\Gamma}{F}{\gamma}$ for future convenience.
\end{theorem}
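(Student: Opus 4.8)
The plan is to unwind the definition of the ordinary cycle index of the quotient species and recognize the fixed-point count it requires as a count of \emph{fixed $\Gamma$-orbits}, to which \cref{lem:grouporbits} applies directly. By \cref{def:cycind}, the cycle index of the quotient species is
\begin{equation*}
  \ci{\nicefrac{F}{\Gamma}} = \sum_{n \geq 0} \frac{1}{n!} \sum_{\sigma \in \symgp{n}} \fix \pbrac{\nicefrac{F}{\Gamma} \sbrac{\sigma}} p_{\sigma},
\end{equation*}
so the entire content of the theorem is concentrated in the single quantity $\fix \pbrac{\nicefrac{F}{\Gamma} \sbrac{\sigma}}$, the number of $\nicefrac{F}{\Gamma}$-structures on $\sbrac{n}$ fixed by the transport of $\sigma$. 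Throughout I take $\Gamma$ finite, as the averaging factor $\tfrac{1}{\abs{\Gamma}}$ already presupposes.

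First I would make precise the identification of these fixed quotient-structures with fixed $\Gamma$-orbits. By \cref{def:qspecies}, an $\nicefrac{F}{\Gamma}$-structure on $\sbrac{n}$ is exactly a $\Gamma$-orbit of $F$-structures on $\sbrac{n}$; and since the quotient map $\quomap{\Gamma}$ is natural, the transport $\nicefrac{F}{\Gamma} \sbrac{\sigma}$ is induced from $F \sbrac{\sigma}$, sending the orbit of $s$ to the orbit of $F \sbrac{\sigma} \pbrac{s}$. Hence a $\Gamma$-orbit is fixed by $\nicefrac{F}{\Gamma} \sbrac{\sigma}$ precisely when $F \sbrac{\sigma}$ carries that orbit to itself setwise. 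Next I would set up the commuting action required by the lemma: the defining diagram of \cref{def:gspecies} asserts that, on the set $F \sbrac{n}$ of $F$-structures on $\sbrac{n}$, the transport action of $\symgp{n}$ and the action of $\Gamma$ commute, so together they constitute an action of $\symgp{n} \times \Gamma$ on $F \sbrac{n}$. Under this action the pair $\pbrac{\gamma, \sigma}$ fixes $s$ exactly when $\gamma \cdot F \sbrac{\sigma} \pbrac{s} = s$, so that $\fix \pbrac{\gamma, \sigma} = \fix \pbrac{\gamma \cdot F \sbrac{\sigma}}$ in the notation of \cref{def:gcycind}. Applying \cref{lem:grouporbits} with its $\Gamma$ taken to be our $\Gamma$, its $\Delta$ taken to be $\symgp{n}$, and $\delta = \sigma$, the number of $\Gamma$-orbits fixed by $\sigma$ is
\begin{equation*}
  \fix \pbrac{\nicefrac{F}{\Gamma} \sbrac{\sigma}} = \frac{1}{\abs{\Gamma}} \sum_{\gamma \in \Gamma} \fix \pbrac{\gamma \cdot F \sbrac{\sigma}}.
\end{equation*}

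Finally I would substitute this expression into the cycle-index sum and interchange the orders of summation, pulling the sum over $\gamma$ to the outside. The resulting inner sum over $n$ and $\sigma$ is, by \cref{eq:gcycinddef}, exactly $\gcielt{\Gamma}{F}{\gamma}$, yielding $\ci{\nicefrac{F}{\Gamma}} = \frac{1}{\abs{\Gamma}} \sum_{\gamma \in \Gamma} \gcielt{\Gamma}{F}{\gamma} = \qgci{\Gamma}{F}$; the fully expanded form in the statement is then just this with the definition of $\gcielt{\Gamma}{F}{\gamma}$ written out, and the interchange is legitimate since it is a rearrangement of a formal power series whose coefficients are each finite sums. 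I expect the only genuinely substantive step to be the bookkeeping of the two commuting actions so that \cref{lem:grouporbits} applies verbatim---in particular, keeping straight that the transport $\sigma$ plays the role of the distinguished element $\delta$ while it is $\Gamma$ that is averaged over. Once the $\symgp{n} \times \Gamma$ action is in hand and fixed quotient-structures are identified with fixed orbits, the remainder is purely formal, so the main obstacle is notational rather than mathematical.
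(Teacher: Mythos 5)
Your proposal is correct and follows essentially the same route as the paper: the paper likewise identifies fixed points of the transport $\nicefrac{F}{\Gamma} \sbrac{\sigma}$ with the $\Gamma$-orbits fixed under the commuting $\pbrac{\symgp{n} \times \Gamma}$-action and then applies \cref{lem:grouporbits} with $\Delta = \symgp{n}$ and $\delta = \sigma$. Your write-up merely makes explicit some bookkeeping (naturality of the quotient map and the interchange of summation) that the paper leaves implicit.
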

Note that this same result on cycle indices is implicit in \cite[\S 2.2.3]{bous:species}.
With it, we can compute explicit enumerative data for a quotient species using cycle-index information of the original $\Gamma$-species with respect to the group action, as desired.

Recall from \cref{thm:ciegf,thm:ciogf} that the exponential generating function $F \pbrac{x}$ of labeled $F$-structures and the ordinary generating function $\tilde{F} \pbrac{x}$ of unlabeled $F$-structures may both be computed from the cycle index $\ci{F}$ of an ordinary species $F$ by simple substitutions.
In the $\Gamma$-species context, we may perform similar substitutions to derive analogous generating functions.

\begin{theorem}
  \label{thm:gciegf}
  The exponential generating function $F_{\gamma} \pbrac{x}$ of labeled $\gamma$-invariant $F$-structures is
  \begin{equation}
    \label{eq:gciegf}
    F_{\gamma} \pbrac{x} = \gcieltvars{\Gamma}{F}{\gamma}{x, 0, 0, \dots}.
  \end{equation}
\end{theorem}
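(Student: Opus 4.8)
The plan is to follow the template of the proof of \cref{thm:ciegf}, of which this statement is simply the $\gamma$-equivariant refinement; indeed, setting $\gamma = e$ should recover \cref{thm:ciegf} verbatim, since $\gcielt{\Gamma}{F}{e} = \ci{F}$. First I would expand the right-hand side using the defining formula \eqref{eq:gcycinddef} and then perform the prescribed substitution $p_{1} = x$, $p_{i} = 0$ for $i \geq 2$:
\[
  \gcieltvars{\Gamma}{F}{\gamma}{x, 0, 0, \dots} = \sum_{n \geq 0} \frac{1}{n!} \sum_{\sigma \in \symgp{n}} \fix \pbrac{\gamma \cdot F \sbrac{\sigma}} \, p_{\sigma} \Big|_{p_{1} = x,\; p_{i} = 0\ (i \geq 2)}.
\]

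The key observation is that this substitution annihilates all but finitely many terms. Since $p_{\sigma} = p_{1}^{\sigma_{1}} p_{2}^{\sigma_{2}} \cdots$, setting $p_{i} = 0$ for every $i \geq 2$ leaves the monomial nonzero only when $\sigma_{2} = \sigma_{3} = \cdots = 0$; that is, only when every cycle of $\sigma$ has length $1$, which forces $\sigma = \Id_{\sbrac{n}}$ and hence $\sigma_{1} = n$. In that single surviving case $p_{\sigma}$ becomes $x^{n}$, so each inner sum over $\symgp{n}$ collapses to its identity term alone, and the right-hand side reduces to $\sum_{n \geq 0} \frac{1}{n!} \fix \pbrac{\gamma \cdot F \sbrac{\Id_{\sbrac{n}}}} x^{n}$.

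Next I would invoke functoriality: $F \sbrac{\Id_{\sbrac{n}}} = \Id_{F \sbrac{\sbrac{n}}}$, so that $\gamma \cdot F \sbrac{\Id_{\sbrac{n}}}$ is precisely the transport $\gamma_{\sbrac{n}}$ of the group element acting on $F \sbrac{\sbrac{n}}$. Consequently $\fix \pbrac{\gamma \cdot F \sbrac{\Id_{\sbrac{n}}}}$ counts exactly those labeled $F$-structures on $\sbrac{n}$ fixed by $\gamma$, which is by definition the number of labeled $\gamma$-invariant $F$-structures on $\sbrac{n}$. Writing $a_{n}$ for this count, the expression becomes $\sum_{n \geq 0} \frac{1}{n!} a_{n} x^{n}$, which is exactly the exponential generating function $F_{\gamma} \pbrac{x}$, as claimed.

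I expect no genuine obstacle here: the content is entirely bookkeeping, and the substitution trick that eliminates all non-identity permutations is identical to the one used in the ordinary case in \cite[\S 1.2]{bll:species}. The only point meriting explicit care is the identification of a ``$\gamma$-invariant labeled $F$-structure'' with a fixed point of the transport $\gamma_{\sbrac{n}}$, which follows immediately from the definition of the $\Gamma$-action on a $\Gamma$-species in \cref{def:gspecies}.
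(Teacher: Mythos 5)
Your proposal is correct and is in substance the paper's own argument: the paper disposes of this theorem in one line, saying it ``follows directly'' from \cref{thm:ciegf} by viewing the $\gamma$-invariant $F$-structures as a combinatorial class in their own right, and your computation --- the substitution $p_{1} = x$, $p_{i} = 0$ killing every non-identity permutation, followed by the functoriality observation that $\fix\pbrac{\gamma \cdot F\sbrac{\Id_{\sbrac{n}}}}$ counts exactly the labeled $\gamma$-invariant structures on $\sbrac{n}$ --- is precisely the bookkeeping that this one-liner compresses. If anything, your write-up is more careful than the paper's, since it makes explicit the only point where care is needed (identifying fixed points of $\gamma \cdot F\sbrac{\Id}$ with $\gamma$-invariant structures via \cref{def:gspecies}), so no changes are needed.
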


\begin{theorem}
  \label{thm:gciogf}
  The ordinary generating function $\tilde{F}_{\gamma} \pbrac{x}$ of unlabeled $\gamma$-invariant $F$-structures is
  \begin{equation}
    \label{eq:gciogf}
    \tilde{F}_{\gamma} \pbrac{x} = \gcieltvars{\Gamma}{F}{\gamma}{x, x^{2}, x^{3}, \dots}.
  \end{equation}
\end{theorem}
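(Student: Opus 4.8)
The plan is to mirror the classical proof of \cref{thm:ciogf}, replacing the ordinary appeal to Burnside's Lemma with the generalized orbit-counting result of \cref{lem:grouporbits}, which is tailored precisely to a situation of two commuting group actions. The key structural fact I will exploit is that, because $F$ is a $\Gamma$-species, the action of $\gamma$ on $F \sbrac{n}$ commutes with every transport $F \sbrac{\sigma}$ for $\sigma \in \symgp{n}$; this is exactly the commuting-square condition of \cref{def:gspecies}. Consequently $\symgp{n} \times \langle \gamma \rangle$ acts on the set $F \sbrac{n}$, and $\gamma$ descends to a permutation of the set of $\symgp{n}$-orbits, that is, of the unlabeled $F$-structures of size $n$. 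I will take ``unlabeled $\gamma$-invariant $F$-structure of size $n$'' to mean precisely an $\symgp{n}$-orbit on $F \sbrac{n}$ that is fixed setwise by this induced action of $\gamma$.

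First I would perform the substitution $p_{i} \mapsto x^{i}$ in the definition \cref{eq:gcycinddef} of the $\gamma$-term. For any $\sigma \in \symgp{n}$ we have $\sum_{i} i \sigma_{i} = n$, so $p_{\sigma} = p_{1}^{\sigma_{1}} p_{2}^{\sigma_{2}} \cdots$ collapses to $x^{n}$ under this substitution, and structures of distinct sizes contribute distinct powers of $x$. Collecting terms by degree then yields
\begin{equation*}
  \gcieltvars{\Gamma}{F}{\gamma}{x, x^{2}, x^{3}, \dots} = \sum_{n \geq 0} \frac{1}{n!} \pbrac{\sum_{\sigma \in \symgp{n}} \fix \pbrac{\gamma \cdot F \sbrac{\sigma}}} x^{n},
\end{equation*}
so that the coefficient of $x^{n}$ is $\frac{1}{n!} \sum_{\sigma \in \symgp{n}} \fix \pbrac{\gamma \cdot F \sbrac{\sigma}}$. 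It remains to identify this quantity with the number of $\gamma$-invariant unlabeled structures of size $n$.

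For this I would invoke \cref{lem:grouporbits} with its group $\Gamma$ taken to be $\symgp{n}$, its group $\Delta$ taken to be the finite cyclic group $\langle \gamma \rangle$, its set $S$ taken to be $F \sbrac{n}$, and its distinguished element $\delta$ taken to be $\gamma$. The element $\pbrac{\sigma, \gamma}$ of $\symgp{n} \times \langle \gamma \rangle$ acts on $s \in F \sbrac{n}$ by $s \mapsto \gamma \cdot F \sbrac{\sigma} \pbrac{s}$, so the number of its fixed points is exactly $\fix \pbrac{\gamma \cdot F \sbrac{\sigma}}$. The lemma then asserts that the number of $\symgp{n}$-orbits fixed by $\gamma$ equals $\frac{1}{n!} \sum_{\sigma \in \symgp{n}} \fix \pbrac{\gamma \cdot F \sbrac{\sigma}}$, which is precisely the coefficient extracted above; matching coefficients of $x^{n}$ for all $n$ completes the proof.

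The main obstacle is not computational but definitional: the argument hinges on pinning down ``$\gamma$-invariant unlabeled $F$-structure'' as an $\symgp{n}$-orbit fixed setwise by $\gamma$, and on verifying that $\symgp{n}$ and $\langle \gamma \rangle$ genuinely furnish commuting actions so that \cref{lem:grouporbits} applies with the roles of the two groups assigned as above. It is worth flagging that here the group one quotients by is $\symgp{n}$, with $\gamma$ playing the role of the auxiliary $\delta$, whereas in the eventual quotient-species application of \cref{thm:qsci} the roles of the acting groups are different. Once this commuting-action bookkeeping is set up correctly, the substitution and the orbit count fit together immediately.
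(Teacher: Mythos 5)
Your proof is correct, and it supplies exactly the content that the paper leaves implicit. The paper's own proof is a one-line deferral: it asserts that \cref{thm:gciegf,thm:gciogf} ``follow directly'' from \cref{thm:ciegf,thm:ciogf} by regarding $\tilde{F}_{\gamma}$ as the counting series of the class of $\gamma$-invariant structures, leaving the orbit-counting details to the classical proofs cited from the literature. You instead argue directly: after the substitution $p_{i} \mapsto x^{i}$ collapses $p_{\sigma}$ to $x^{n}$, you identify the coefficient of $x^{n}$ as $\frac{1}{n!} \sum_{\sigma \in \symgp{n}} \fix \pbrac{\gamma \cdot F \sbrac{\sigma}}$ and convert it into an orbit count via \cref{lem:grouporbits}, applied with the roles reversed from the paper's use of that lemma in \cref{thm:qsci} (here one quotients by $\symgp{n}$, with $\gamma$ as the auxiliary element $\delta$) --- a reversal you correctly flag. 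This explicitness buys genuine safety: read naively, the paper's reduction invites applying \cref{thm:ciogf} to the subspecies of pointwise $\gamma$-fixed structures, which would be wrong, since $\gcielt{\Gamma}{F}{\gamma}$ is not the cycle index of that subspecies (its coefficients count fixed points of the composites $\gamma \cdot F \sbrac{\sigma}$, not structures fixed by $\gamma$ and $F \sbrac{\sigma}$ separately), and an $\symgp{n}$-orbit can be setwise $\gamma$-stable while containing no pointwise $\gamma$-fixed structure (linear orders under order reversal, for instance). Your insistence that ``unlabeled $\gamma$-invariant'' means ``orbit fixed setwise by the induced action of $\gamma$,'' well defined precisely because of the commuting squares of \cref{def:gspecies}, together with the generalized Burnside count, is what makes the unlabeled statement true. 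One pedantic caveat: \cref{lem:grouporbits} is stated for finite groups, so taking $\Delta = \langle \gamma \rangle$ requires $\gamma$ to have finite order; this is harmless, both because $\Gamma$ is finite in every application in the paper and because the lemma's proof never uses finiteness of $\Delta$, only of the group whose orbits are being counted.
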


These theorems follow directly from \cref{eq:ciegf,eq:ciogf}, thinking of $F_{\gamma} \pbrac{x}$ and $\widetilde{F_{\gamma} \pbrac{x}}$ as enumerating the combinatorial class of $F$-structures which are invariant under $\gamma$.

Note that the notion of `unlabeled $\gamma$-invariant $F$-structures' is always well-defined precisely because \cref{def:gspecies} requires that the action of $\Gamma$ commutes with transport of structures.

From these results and \cref{thm:qsci}, we can the conclude:
\begin{theorem}
  \label{qgciegf}
  The exponential generating function $F \pbrac{x}$ of labeled $\nicefrac{F}{\Gamma}$-structures is
  \begin{equation}
    \label{eq:qgciegf}
    F \pbrac{x} = \frac{1}{\abs{\Gamma}} \sum_{\gamma \in \Gamma} F_{\gamma} \pbrac{x}.
  \end{equation}
\end{theorem}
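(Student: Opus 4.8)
The plan is to recognize that, despite its notation, the statement is really a fact about an \emph{ordinary} species—namely the quotient species $\nicefrac{F}{\Gamma}$—and so to obtain it by feeding the cycle index computed in \cref{thm:qsci} into the exponential specialization of \cref{thm:ciegf}. The entire argument is thus a single substitution computation, and the only thing requiring justification is that this substitution commutes with the averaging sum over $\Gamma$.

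First I would invoke \cref{thm:ciegf}, applied to the ordinary species $\nicefrac{F}{\Gamma}$: its exponential generating function is obtained from its ordinary cycle index $\ci{\nicefrac{F}{\Gamma}}$ by the substitution $p_{1} \mapsto x$ and $p_{i} \mapsto 0$ for all $i \geq 2$. Next I would replace $\ci{\nicefrac{F}{\Gamma}}$ by the explicit expression $\qgci{\Gamma}{F} = \frac{1}{\abs{\Gamma}} \sum_{\gamma \in \Gamma} \gcielt{\Gamma}{F}{\gamma}$ supplied by \cref{thm:qsci}. Because $\Gamma$ is finite and the specialization $p_{1} \mapsto x$, $p_{i} \mapsto 0$ ($i \geq 2$) is a ring homomorphism—in particular $\ringname{Q}$-linear—it distributes over the finite sum; carrying it through term by term yields
\begin{equation*}
  F \pbrac{x} = \frac{1}{\abs{\Gamma}} \sum_{\gamma \in \Gamma} \gcieltvars{\Gamma}{F}{\gamma}{x, 0, 0, \dots}.
\end{equation*}
Finally, \cref{thm:gciegf} identifies each summand $\gcieltvars{\Gamma}{F}{\gamma}{x, 0, 0, \dots}$ with $F_{\gamma} \pbrac{x}$, the exponential generating function of labeled $\gamma$-invariant $F$-structures, which is exactly the claimed formula.

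I do not expect any genuine obstacle: once the three ingredients are assembled, every step is forced, and the lone subtlety is the interchange of the substitution with the sum. That interchange is legitimate simply because the sum has finitely many terms (as $\Gamma$ is finite) and evaluation of a formal power series at fixed values of its indeterminates is additive; no question of convergence arises, since everything here is formal. The one point worth flagging explicitly in the write-up is that $F \pbrac{x}$ on the left-hand side denotes the generating function of the quotient species $\nicefrac{F}{\Gamma}$, so that \cref{thm:ciegf}, a theorem about ordinary species, may indeed be applied to it.
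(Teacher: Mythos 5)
Your proposal is correct and takes essentially the same route as the paper, which likewise deduces this theorem by combining \cref{thm:qsci} with \cref{thm:ciegf,thm:gciegf}: substitute $\pbrac{x, 0, 0, \dots}$ into the averaged cycle index $\qgci{\Gamma}{F}$ and identify each resulting summand as $F_{\gamma} \pbrac{x}$. The only difference is that you make explicit the (routine) interchange of the specialization with the finite sum over $\Gamma$, a step the paper leaves implicit.
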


Similarly,
\begin{theorem}
  \label{qgciogf}
  The ordinary generating function $\tilde{F} \pbrac{x}$ of unlabeled $\nicefrac{F}{\Gamma}$-structures is
  \begin{equation}
    \label{eq:qgcogf}
    \tilde{F} \pbrac{x} = \frac{1}{\abs{\Gamma}} \sum_{\gamma \in \Gamma} \tilde{F}_{\gamma} \pbrac{x}.
  \end{equation}
\end{theorem}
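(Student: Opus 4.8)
The plan is to regard the quotient $\nicefrac{F}{\Gamma}$ as an ordinary species and apply the unlabeled-enumeration machinery of \cref{thm:ciogf} directly to it, then rewrite the cycle index that appears by means of the quotient formula \cref{thm:qsci} and recognize the individual summands through \cref{thm:gciogf}. In this sense nothing new needs to be built: the result is a chaining-together of three facts already in hand, linked by the substitution $p_{i} \mapsto x^{i}$.

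First I would note that $\nicefrac{F}{\Gamma}$ is, by \cref{def:qspecies}, a genuine ordinary species, so \cref{thm:ciogf} applies without modification and yields $\tilde{F} \pbrac{x} = \ci{\nicefrac{F}{\Gamma}} \pbrac{x, x^{2}, x^{3}, \dots}$, where $\tilde{F} \pbrac{x}$ is precisely the ordinary generating function of unlabeled quotient structures whose formula we seek. Next I would substitute the expansion of \cref{thm:qsci}, namely $\ci{\nicefrac{F}{\Gamma}} = \frac{1}{\abs{\Gamma}} \sum_{\gamma \in \Gamma} \gcielt{\Gamma}{F}{\gamma}$, into this expression. Because the specialization $p_{i} \mapsto x^{i}$ is a $\ringname{Q}$-linear operation on symmetric functions, it commutes with the scalar $\frac{1}{\abs{\Gamma}}$ and distributes across the finite sum over $\gamma \in \Gamma$, giving $\tilde{F} \pbrac{x} = \frac{1}{\abs{\Gamma}} \sum_{\gamma \in \Gamma} \gcieltvars{\Gamma}{F}{\gamma}{x, x^{2}, x^{3}, \dots}$.

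Finally I would invoke \cref{thm:gciogf}, which identifies each specialized term $\gcieltvars{\Gamma}{F}{\gamma}{x, x^{2}, x^{3}, \dots}$ with $\tilde{F}_{\gamma} \pbrac{x}$, the ordinary generating function of unlabeled $\gamma$-invariant $F$-structures; substituting these identifications produces exactly \cref{eq:qgcogf}. The only point requiring care — and it is a mild one — is the interchange of the specialization with the finite rational-linear combination indexed by $\Gamma$; since $\Gamma$ is finite and the substitution is linear, no questions of convergence or reordering arise, so the interchange is automatic. The entire argument runs parallel to that for the exponential generating function in \cref{qgciegf}, with the unlabeled specialization $p_{i} \mapsto x^{i}$ playing the role that the labeled specialization ($p_{1} \mapsto x$ and all other $p_{i} \mapsto 0$) plays there.
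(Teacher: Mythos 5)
Your proposal is correct and is essentially the paper's own argument: the paper derives this theorem exactly by combining \cref{thm:ciogf} applied to the ordinary species $\nicefrac{F}{\Gamma}$, the quotient cycle-index formula of \cref{thm:qsci}, and the identification of each specialized summand via \cref{thm:gciogf}. Your only addition is to make explicit the (harmless) interchange of the substitution $p_{i} \mapsto x^{i}$ with the finite average over $\Gamma$, which the paper leaves implicit.
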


Note also that all of the above extends naturally into the multisort species context.
We will use this extensively in \cref{c:ktrees}.
It also extends naturally to weighted contexts, but we will not apply this extension here.

\chapter{The species of bipartite blocks}\label{c:bpblocks}
\section{Introduction}\label{s:bpintro}
We first apply the theory of quotient species to the enumeration of bipartite blocks.

\begin{definition}
  \label{def:bcgraph}
  A \emph{bicolored graph} is a graph $\Gamma$ each vertex of which has been assigned one of two colors (here, black and white) such that each edge connects vertices of different colors.
  A \emph{bipartite graph} (sometimes called \emph{bicolorable}) is a graph $\Gamma$ which admits such a coloring.  
\end{definition}

There is an extensive literature about bicolored and bipartite graphs, including enumerative results for bicolored graphs \cite{har:bicolored}, bipartite graphs both allowing \cite{han:bipartite} and prohibiting \cite{harprins:bipartite} isolated points, and bipartite blocks \cite{harrob:bipblocks}.
However, this final enumeration was previously completed only in the labeled case.
By considering the problem in light of the theory of $\Gamma$-species, we develop a more systematic understanding of the structural relationships between these various classes of graphs, which allows us to enumerate all of them in both labeled and unlabeled settings.

Throughout this chapter, we denote by $\specname{BC}$ the species of bicolored graphs and by $\specname{BP}$ the species of bipartite graphs.
The prefix $\specname{C}$ will indicate the connected analogue of such a species.

We are motivated by the graph-theoretic fact that each \emph{connected} bipartite graph may be identified with exactly two bicolored graphs which are color-dual.
In other words, a connected bipartite graph is (by definition or by easy exercise, depending on your approach) an orbit of connected bicolored graphs under the action of $\symgp{2}$ where the nontrivial element $\tau$ reverses all vertex colors.
We will hereafter treat all the various species of bicolored graphs as $\symgp{2}$-species with respect to this action and use the theory developed in \cref{s:quot} to pass to bipartite graphs.

Although the theory of multisort species presented in \cref{s:mult} is in general well-suited to the study of colored graphs, we will not need it here.
The restrictions that vertex colorings place on automorphisms of bicolored graphs are simple enough that we can deal with them directly.

\section{Bicolored graphs}\label{s:bcgraph}
We begin our investigation by directly computing the $\symgp{2}$-cycle index for the species $\specname{BC}$ of bicolored graphs with the color-reversing $\symgp{2}$-action described previously.
We will then use various methods from the species algebra of \cref{c:species} to pass to various other species.

\subsection{Computing $\gcielt{\symgp{2}}{\specname{BC}}{e}$}\label{ss:ecibc}
We construct the cycle index for the species $\specname{BC}$ of bicolored graphs in the classical way, which in light of our $\symgp{2}$-action will give $\gcielt{\symgp{2}}{\specname{BC}}{e}$. 

Recall the formula for the cycle index of a $\Gamma$-species in \cref{eq:gcycinddef}:
\begin{equation*}
  \gcielt{\Gamma}{F}{\gamma} = \sum_{n \geq 0} \frac{1}{n!} \sum_{\sigma \in \symgp{n}} \fix \pbrac{\gamma \cdot F \sbrac{\sigma}} p_{\sigma}.
\end{equation*}
Thus, for each $n > 0$ and each permutation $\pi \in \symgp{n}$, we must count bicolored graphs on $\sbrac{n}$ for which $\pi$ is a color-preserving automorphism.
To simplify some future calculations, we omit empty graphs and define $\specname{BC} \sbrac{\varnothing} = \varnothing$.
We note that the \emph{number} of such graphs in fact depends only on the cycle type $\lambda \vdash n$ of the permutation $\pi$, so we can use the cycle index formula in \cref{eq:cycinddefpart} interpreted as a $\Gamma$-cycle index identity.

Fix some $n \geq 0$ and let $\lambda \vdash n$.
We wish to count bicolored graphs for which a chosen permutation $\pi$ of cycle type $\lambda$ is a color-preserving automorphism.
Each cycle of the permutation must correspond to a monochromatic subset of the vertices, so we may construct graphs by drawing bicolored edges into a given colored vertex set.
If we draw some particular bicolored edge, we must also draw every other edge in its orbit under $\pi$ if $\pi$ is to be an automorphism of the graph.
Moreover, every bicolored graph for which $\pi$ is an automorphism may be constructed in this way
Therefore, we direct our attention first to counting these edge orbits for a fixed coloring; we will then count colorings with respect to these results to get our total cycle index.

Consider an edge connecting two cycles of lengths $m$ and $n$; the length of its orbit under the permutation is $\lcm \pbrac{m, n}$, so the number of such orbits of edges between these two cycles is $mn / \lcm \pbrac{m, n} = \gcd \pbrac{m, n}$.
For an example in the case $m = 4, n = 2$, see \cref{fig:exbcecycle}.
The number of orbits for a fixed coloring is then $\sum \gcd \pbrac{m, n}$ where the sum is over the multiset of all cycle lengths $m$ of white cycles and $n$ of black cycles in the permutation $\pi$.
We may then construct any possible graph fixed by our permutation by making a choice of a subset of these cycles to fill with edges, so the total number of such graphs is $\prod 2^{\gcd \pbrac{m, n}}$ for a fixed coloring.

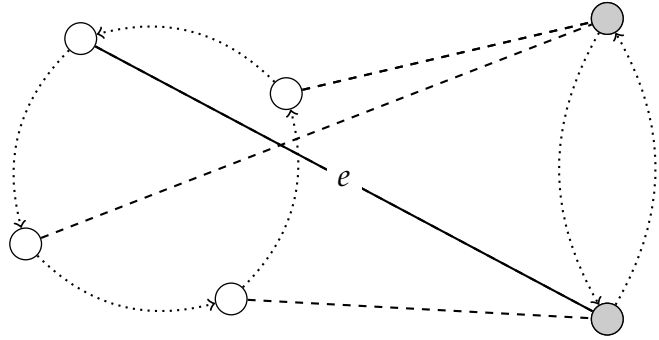
\begin{figure}[htb]
  \centering

  \begin{tikzpicture}
    \GraphInit[vstyle=Hasse]
    
    \begin{scope}[xshift=-3cm,rotate=30]
      \SetUpEdge[style=cycedge]
      \grCycle[RA=2,prefix=a]{4}
    \end{scope}

    \begin{scope}[xshift=+3cm,rotate=90]
      \SetUpEdge[style=cycedge]
      \grCycle[RA=2,prefix=b]{2}
      \AddVertexColor{black!20}{b0,b1}
    \end{scope}

    \SetUpEdge[style=dashed]
    \EdgeDoubleMod{a}{4}{0}{1}{b}{2}{0}{1}{4}
    
    \SetUpEdge[style=solid]
    \Edge[label={$e$}](a1)(b1)
  \end{tikzpicture}
  \caption[Example edge-orbit of a color-preserving automorphism]{An edge $e$ (solid) between two cycles of lengths $4$ and $2$ in a permutation and that edge's orbit (dashed)}
  \label{fig:exbcecycle}
\end{figure}

We now turn our attention to the possible colorings of the graph which are compatible with a permutation of specified cycle type $\lambda$.
We split our partition into two subpartitions, writing $\lambda = \mu \cup \nu$, where partitions are treated as multisets and $\cup$ is the multiset union, and designate $\mu$ to represent the white cycles and $\nu$ the black.
Then the total number of graphs fixed by such a permutation with a specified decomposition is
\begin{equation*}
  \fix \pbrac{\mu, \nu} = \prod_{\substack{i \in \mu \\ j \in \nu}} 2^{\gcd \pbrac{i, j}}
\end{equation*}
where the product is over the elements of $\mu$ and $\lambda$ taken as multisets.
However, since $\mu$ and $\nu$ represent white and black cycles respectively, it is important to distinguish \emph{which} cycles of $\lambda$ are taken into each.
The $\lambda_{i}$ $i$-cycles of $\lambda$ can be distributed into $\mu$ and $\nu$ in $\binom{\lambda_{i}}{\mu_{i}} = \lambda_{i}! / \pbrac{\mu_{i}! \nu_{i}!}$ ways, so in total there are $\prod_{i} \lambda_{i}! / \pbrac{\mu_{i}! \nu_{i}!} = z_{\lambda} / \pbrac{z_{\mu} z_{\nu}}$ decompositions.
Thus,
\begin{equation*}
  \fix \pbrac{\lambda} = \frac{z_{\lambda}}{z_{\mu} z_{\nu}} \fix \pbrac{\mu, \nu} = \sum_{\mu \cup \nu = \lambda} \frac{z_{\lambda}}{z_{\mu} z_{\nu}} \prod_{\substack{i \in \mu \\ j \in \nu}} 2^{\gcd \pbrac{i, j}}.
\end{equation*}
Therefore we conclude:
\begin{theorem}
  \begin{equation}
    \label{eq:ecibc}
    \gcielt{\symgp{2}}{\specname{BC}}{e} = \sum_{n > 0} \sum_{\substack{\mu, \nu \\ \mu \cup \nu \vdash n}} \frac{p_{\mu \cup \nu}}{z_{\mu} z_{\nu}} \prod_{i, j} 2^{\gcd \pbrac{\mu_{i}, \nu_{j}}}
  \end{equation}
\end{theorem}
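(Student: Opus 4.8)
The plan is to recognize that $\gcielt{\symgp{2}}{\specname{BC}}{e}$ is, by \cref{eq:gcycinddef} specialized to $\gamma = e$, nothing but the ordinary cycle index of $\specname{BC}$, so the whole problem reduces to counting, for each cycle type, the bicolored graphs on which a permutation of that type acts as a color-preserving automorphism. Since $\fix \pbrac{\specname{BC} \sbrac{\sigma}}$ depends only on the cycle type $\lambda$ of $\sigma$, I would immediately pass to the partition form \cref{eq:cycinddefpart} and write $\gcielt{\symgp{2}}{\specname{BC}}{e} = \sum_{n > 0} \sum_{\lambda \vdash n} \fix \pbrac{\specname{BC} \sbrac{\lambda}} \frac{p_{\lambda}}{z_{\lambda}}$, adopting the convention $\specname{BC} \sbrac{\varnothing} = \varnothing$ so the sum begins at $n > 0$. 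The remaining task is purely to evaluate $\fix \pbrac{\specname{BC} \sbrac{\lambda}}$ and then simplify the resulting double sum.

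To compute $\fix \pbrac{\specname{BC} \sbrac{\lambda}}$, fix a permutation $\pi$ of cycle type $\lambda$ and observe that, because a color-preserving automorphism carries each cycle to itself with its colors intact, every cycle of $\pi$ must be monochromatic. A $\pi$-invariant bicolored graph is therefore specified by two independent pieces of data: a choice of color for each cycle, and, once the coloring is fixed, a choice of which $\pi$-orbits of admissible edges to fill. The heart of the argument --- the step I expect to be the main obstacle --- is the edge-orbit count. Consider an edge joining a white cycle of length $m$ to a black cycle of length $n$; writing the two cycles as $\pbrac{a_{0} \, a_{1} \, \cdots \, a_{m-1}}$ and $\pbrac{b_{0} \, b_{1} \, \cdots \, b_{n-1}}$, the orbit of $\cbrac{a_{i}, b_{j}}$ under $\pi$ returns to itself after exactly $\lcm \pbrac{m, n}$ steps, so each orbit has size $\lcm \pbrac{m, n}$ and there are $mn / \lcm \pbrac{m, n} = \gcd \pbrac{m, n}$ of them. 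One must then verify both directions of the correspondence: that filling any subset of these orbits yields a graph for which $\pi$ really is an automorphism, and that every $\pi$-invariant graph arises this way. Granting this, each orbit may be freely included or omitted, contributing a factor of $2^{\gcd \pbrac{m, n}}$, so for a fixed coloring the number of $\pi$-invariant bicolored graphs is $\prod 2^{\gcd \pbrac{m, n}}$ taken over all pairs consisting of a white cycle of length $m$ and a black cycle of length $n$.

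Finally, I would sum over colorings and assemble. A coloring corresponds to a decomposition $\lambda = \mu \cup \nu$ of multisets, with $\mu$ recording the white cycle lengths and $\nu$ the black; for such a decomposition the edge count is $\prod_{i \in \mu, \, j \in \nu} 2^{\gcd \pbrac{i, j}}$, while the number of genuinely distinct colorings realizing it is $\prod_{i} \binom{\lambda_{i}}{\mu_{i}} = z_{\lambda} / \pbrac{z_{\mu} z_{\nu}}$, since the $\lambda_{i}$ cycles of length $i$ may be split into $\mu_{i}$ white and $\nu_{i}$ black. This gives $\fix \pbrac{\specname{BC} \sbrac{\lambda}} = \sum_{\mu \cup \nu = \lambda} \frac{z_{\lambda}}{z_{\mu} z_{\nu}} \prod_{i \in \mu, \, j \in \nu} 2^{\gcd \pbrac{i, j}}$. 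Substituting this into the partition-form cycle index, the factor $z_{\lambda}$ cancels against the $1 / z_{\lambda}$ already present, and since $p_{\lambda} = p_{\mu \cup \nu}$ the outer sum over $\lambda \vdash n$ merges with the inner sum over decompositions into a single sum over pairs $\mu, \nu$ with $\mu \cup \nu \vdash n$, producing exactly \cref{eq:ecibc}.
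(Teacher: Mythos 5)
Your proposal is correct and follows essentially the same route as the paper's own argument: reduce to the ordinary cycle index in partition form, observe that cycles must be monochromatic, count edge-orbits between a white $m$-cycle and a black $n$-cycle as $\gcd\pbrac{m,n}$, multiply by the $z_{\lambda}/\pbrac{z_{\mu}z_{\nu}}$ colorings realizing each decomposition $\lambda = \mu \cup \nu$, and let the $z_{\lambda}$ cancel in the final assembly. No gaps; the only difference is that you make the orbit-size computation and the two-directional correspondence between orbit-subsets and $\pi$-invariant graphs slightly more explicit than the paper does.
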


Explicit formulas for the generating function for unlabeled bicolored graphs were obtained in \cite{har:bicolored} using conventional P\'{o}lya-theoretic methods.
Conceptually, this enumeration in fact largely mirrors our own.
Harary uses the algebra of the classical cycle index of the `line group\footnote{The \emph{line group} of a graph is the group of permutations of edges induced by permutations of vertices.}' of the complete bicolored graph of which any given bicolored graph is a spanning subgraph.
He then enumerates orbits of edges under these groups using the P\'{o}lya enumeration theorem.
This is clearly analogous to our procedure, which enumerates the orbits of edges under each specific permutation of vertices.

\subsection{Calculating $\gcielt{\symgp{2}}{\specname{BC}}{\tau}$}\label{ss:tcibc}
Recall that the nontrivial element of $\tau \in \symgp{2}$ acts on bicolored graphs by reversing all colors.

We again consider the cycles in the vertex set $\sbrac{n}$ induced by a permutation $\pi \in \symgp{n}$ and use the partition $\lambda$ corresponding to the cycle type of $\pi$ for bookkeeping.
We then wish to count bicolored graphs on $\sbrac{n}$ for which $\tau \cdot \pi$ is an automorphism, which is to say that $\pi$ itself is a color-\emph{reversing} automorphism.
Once again, the number of bicolored graphs for which $\pi$ is a color-reversing automorphism is in fact dependent only on the cycle type $\lambda$.
Each cycle of vertices must be color-alternating and hence of even length, so our partition $\lambda$ must have only even parts.
Once this condition is satisfied, edges may be drawn either within a single cycle or between two cycles, and as before if we draw in any edge we must draw in its entire orbit under $\pi$ (since $\pi$ is to be an automorphism of the underlying graph).
Moreover, all graphs for which $\pi$ is a color-reversing automorphism and with a fixed coloring may be constructed in this way, so it suffices to count such edge orbits and then consider how colorings may be assigned.

Consider a cycle of length $2n$; we hereafter describe such a cycle as having \emph{semilength} $n$.
There are exactly $n^{2}$ possible white-black edges in such a cycle.
If $n$ is odd, diametrically opposed vertices have opposite colors, so we can have an edge of length $l = n$ (in the sense of connecting two vertices which are $l$ steps apart in the cycle), and in such a case the orbit length is exactly $n$ and there is exactly one orbit.
See \cref{fig:exbctincycd} for an example of this case.
However, if $n$ is odd but $l \neq n$, the orbit length is $2n$, so the number of such orbits is $\frac{n^{2} - n}{2n}$.
Hence, the total number of orbits for $n$ odd is $\frac{n^2 + n}{2n} = \ceil{\frac{n}{2}}$.
Similarly, if $n$ is even, all orbits are of length $2n$, so the total number of orbits is $\frac{n^{2}}{2n} = \frac{n}{2} = \ceil{\frac{n}{2}}$ also.
See \cref{fig:exbctincyce} for an example of each of these cases.

\begin{figure}[htb]
  \centering
  \subfloat[A diameter $d$ ($l = 3$)]{\makebox[.45\textwidth]{
      \label{fig:exbctincycd}
      \begin{tikzpicture}
        \GraphInit[vstyle=Hasse]
        
        \SetUpEdge[style=cycedge]
        \grCycle[RA=2,prefix=a]{6}
        \AddVertexColor{black!20}{a0,a2,a4}

        \SetUpEdge[style=dashed]
        \EdgeInGraphMod{a}{6}{3}{0}

        \SetUpEdge[style=solid]
        \Edge[label={$d$},style={pos=.25}](a0)(a3)
      \end{tikzpicture}
    }}
  \hfill
  \subfloat[A non-diameter $e$ ($l = 1)$]{\makebox[.45\textwidth]{
      \label{fig:exbctincyce}
      \begin{tikzpicture}
        \GraphInit[vstyle=Hasse]
        \SetUpEdge[style=cycedge]
        \grCycle[RA=2,prefix=b]{6}
        \AddVertexColor{black!20}{b0,b2,b4}

        \SetUpEdge[style=dashed]
        \EdgeInGraphMod{b}{6}{1}{0}

        \SetUpEdge[style=solid]
        \Edge[label={$e$}](b0)(b1)
      \end{tikzpicture}
    }}
  \caption[Two example edge-orbits in a color-reversing automorphism]{Both types of intra-cycle edges and their orbits on a typical color-alternating $6$-cycle}
  \label{fig:exbctincyc}
\end{figure}
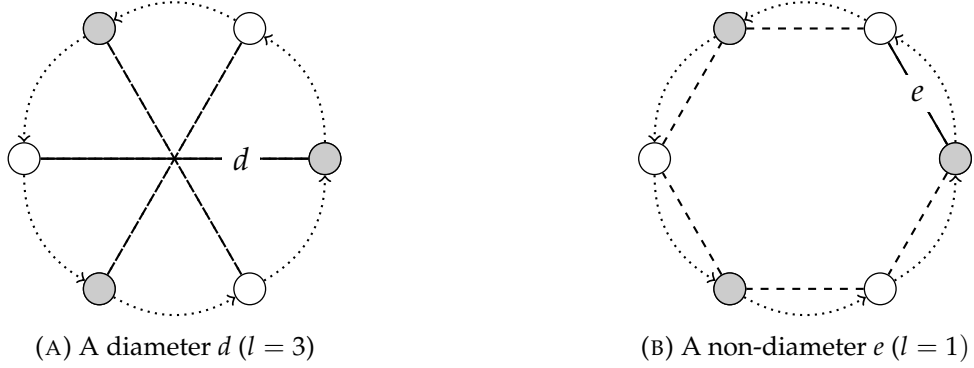

Now consider an edge to be drawn between two cycles of semilengths $m$ and $n$.
The total number of possible white-black edges is $2mn$, each of which has an orbit length of $\lcm \pbrac{2m, 2n} = 2 \lcm \pbrac{m, n}$.
Hence, the total number of orbits is $2mn / \pbrac{2 \lcm \pbrac{m, n}} = \gcd \pbrac{m, n}$.

\begin{figure}[htb]
  \centering
  \begin{tikzpicture}
    \GraphInit[vstyle=Hasse]
    
    \begin{scope}[xshift=-4cm,rotate=30]
      \SetUpEdge[style=cycedge]
      \grCycle[RA=2,prefix=a]{4}
      \AddVertexColor{black!20}{a0,a2}
    \end{scope}

    \begin{scope}[xshift=4cm,rotate=90]
      \SetUpEdge[style=cycedge]
      \grCycle[RA=2,prefix=b]{2}
      \AddVertexColor{black!20}{b0}
    \end{scope}

    \SetUpEdge[style=dashed]
    \EdgeDoubleMod{a}{4}{1}{1}{b}{2}{0}{1}{4}

    \SetUpEdge[style=solid]
    \Edge[label={$e$}](a0)(b1)
  \end{tikzpicture}
  \caption[Another example edge-orbit of a color-reversing automorphism]{An edge $e$ and its orbit between color-alternating cycles of semilengths $2$ and $1$}
  \label{fig:exbctbtwcyc}
\end{figure}
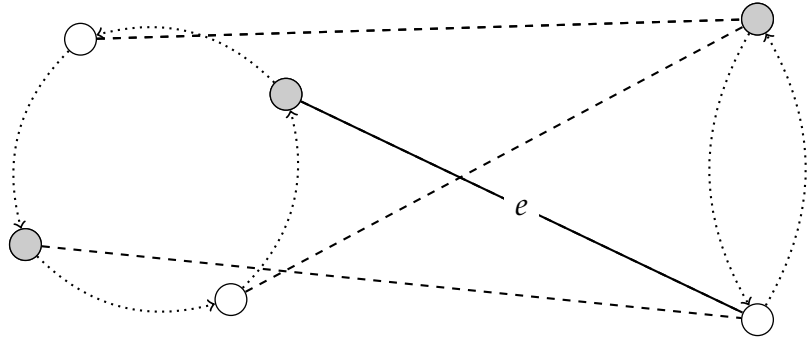

All together, then, the number of orbits for a fixed coloring of a permutation of cycle type $2 \lambda$ (denoting the partition obtained by doubling every part of $\lambda$) is $\sum_{i} \ceil{\frac{\lambda_{i}}{2}} + \sum_{i < j} \gcd \pbrac{\lambda_{i}, \lambda_{j}}$.
All valid bicolored graphs for a fixed coloring for which $\pi$ is a color-preserving automorphism may be obtained uniquely by making some choice of a subset of this collection of orbits, just as in \cref{ss:ecibc}.
Thus, the total number of possible graphs for a given vertex coloring is
\begin{equation*}
  \prod_{i} 2^{\ceil{\frac{\lambda_{i}}{2}}} \prod_{i < j} 2^{\gcd \pbrac{\lambda_{i}, \lambda_{j}}},
\end{equation*}
which we note is independent of the choice of coloring.
For a partition $2\lambda$ with $l \pbrac{\lambda}$ cycles, there are then $2^{l \pbrac{\lambda}}$ colorings compatible with our requirement that each cycle is color-alternating, which we multiply by the previous to obtain the total number of graphs for all permutations $\pi$ with cycle type $2 \lambda$.
Therefore we conclude:
\begin{theorem}
  \begin{equation}
    \label{eq:tcibc}
    \gcielt{\symgp{2}}{\specname{BC}}{\tau} = \sum_{\substack{n > 0 \\ \text{$n$ even}}} \sum_{\lambda \vdash \frac{n}{2}} 2^{l \pbrac{\lambda}} \frac{p_{2 \lambda}}{z_{2 \lambda}} \prod_{i} 2^{\ceil{\frac{\lambda_{i}}{2}}} \prod_{i < j} 2^{\gcd \pbrac{\lambda_{i}, \lambda_{j}}}
  \end{equation}
\end{theorem}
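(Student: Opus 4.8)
The plan is to compute $\gcielt{\symgp{2}}{\specname{BC}}{\tau}$ directly from its defining formula \cref{eq:gcycinddef}, in which the coefficient attached to $p_{\sigma}$ is $\fix\pbrac{\tau \cdot \specname{BC}\sbrac{\sigma}}$ --- the number of bicolored graphs on $\sbrac{n}$ for which $\sigma$ acts as a color-\emph{reversing} automorphism. Since this count is a class function of $\sigma$, I would pass at once to the partition-indexed form of \cref{eq:cycinddefpart}, reducing the problem to evaluating $\fix$ for each cycle type. The first structural observation is the key reduction: a color-reversing automorphism must carry each vertex-cycle to a color-alternating sequence, forcing every cycle to have even length. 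Hence only cycle types all of whose parts are even contribute, and I would write such a type as $2\lambda$ with $\lambda \vdash n/2$; this simultaneously explains the restriction to even $n$ and the appearance of the factor $p_{2\lambda}/z_{2\lambda}$ in the claimed formula.

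The heart of the argument is an enumeration of the orbits of potential white--black edges under $\sigma$, since (exactly as in the computation of the $e$-term in \cref{ss:ecibc}) any $\sigma$-invariant bicolored graph is obtained by freely choosing a subset of these orbits to fill. I would split this into two cases. For edges internal to a single cycle of semilength $m$, I would list the $m^{2}$ admissible white--black edges and partition them into rotation orbits; the delicate point is that when $m$ is odd the diametric edge has an orbit of length only $m$ rather than $2m$, while for $m$ even no diametric white--black edge exists at all, and one must check that in both parities the total collapses to $\ceil{m/2}$ orbits. For edges joining two cycles of semilengths $a$ and $b$, the $2ab$ admissible white--black edges each have orbit length $2\lcm\pbrac{a,b}$, yielding $\gcd\pbrac{a,b}$ orbits. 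Summing over all cycles shows that a fixed coloring admits $\sum_{i} \ceil{\lambda_{i}/2} + \sum_{i<j}\gcd\pbrac{\lambda_{i},\lambda_{j}}$ edge-orbits, so the number of invariant graphs for that coloring is $\prod_{i} 2^{\ceil{\lambda_{i}/2}}\prod_{i<j}2^{\gcd\pbrac{\lambda_{i},\lambda_{j}}}$.

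Finally I would incorporate the colorings. Because the per-coloring count above manifestly does not depend on which color is assigned to which cycle, and each of the $l\pbrac{\lambda}$ cycles admits exactly two color-alternating colorings, there are $2^{l\pbrac{\lambda}}$ admissible colorings, giving $\fix\pbrac{2\lambda} = 2^{l\pbrac{\lambda}}\prod_{i} 2^{\ceil{\lambda_{i}/2}}\prod_{i<j}2^{\gcd\pbrac{\lambda_{i},\lambda_{j}}}$. Substituting this into the partition form and retaining only the even cycle types produces precisely \cref{eq:tcibc}. I expect the main obstacle to be the intra-cycle orbit count: correctly handling the odd-semilength diametric edge and verifying that the odd and even cases both reduce to the uniform expression $\ceil{m/2}$ is the one step that requires genuine case analysis rather than the routine bookkeeping that governs the inter-cycle count and the assembly.
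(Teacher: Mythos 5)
Your proposal is correct and follows essentially the same route as the paper's own proof: restriction to even cycle types written as $2\lambda$, the intra-cycle orbit count of $\ceil{\lambda_{i}/2}$ (including the careful odd-semilength diametric case), the inter-cycle count of $\gcd\pbrac{\lambda_{i},\lambda_{j}}$, and the final factor of $2^{l\pbrac{\lambda}}$ for the color-alternating colorings. No gaps; your added remark that no diametric white--black edge exists when the semilength is even is a correct refinement of the same case analysis.
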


\section{Connected bicolored graphs}\label{s:cbc}
As noted in the introduction of this section, we may pass from bicolored to bipartite graphs by taking a quotient under the color-reversing action of $\symgp{2}$ only in the connected case.
Thus, we must pass from the species $\specname{BC}$ to the species $\specname{CBC}$ of connected bicolored graphs to continue.
It is a standard principle of graph enumeration that a graph may be decomposed uniquely into (and thus species-theoretically identified with) the set of its connected components.
We must, of course, require that the component structures are nonempty to ensure that the construction is well-defined, as discussed in \cref{s:specalg}.
This same relationship holds in the case of bicolored graphs.
Thus, the species $\specname{BC}$ of nonempty bicolored graphs is the composition of the species $\specname{CBC}$ of nonempty connected bicolored graphs into the species $\specname{E}^{+} = \specname{E} - 1$ of nonempty sets:
\begin{equation} \specname{BC} = \specname{E}^{+} \circ \specname{CBC} \label{eq:bcdecomp} \end{equation}

Reversing the colors of a bicolored graph is done simply by reversing the colors of each of its connected components independently; thus, once we trivially extend the species $\specname{E}^{+}$ to an $\symgp{2}$-species by applying the trivial action, \cref{eq:bcdecomp} holds as an identity of $\symgp{2}$-species for the color-reversing $\symgp{2}$-action described previously.

To use the decomposition in \cref{eq:bcdecomp} to derive the $\symgp{2}$-cycle index for $\specname{CBC}$, we must invert the $\symgp{2}$-species composition into $\specname{E}^{+}$.
In the context of the theory of virtual species, this is possible; we write $\con := \pbrac{\specname{E} - 1}^{\abrac{-1}}$ to denote this virtual species.
We can derive from \cite[\S 2.5, eq.~(58c)]{bll:species} that its cycle index is
\begin{equation}
  \label{eq:zgamma}
  \ci{\con} = \sum_{k \geq 1} \frac{\mu \pbrac{k}}{k} \log \pbrac{1 + p_{k}}
\end{equation}
where $\mu$ is the M\"{o}bius function.
We can then rewrite \cref{eq:bcdecomp} as
\[\specname{CBC} = \con \circ \specname{BC}\]
It then follows immediately from \cref{thm:gspeccompci} that
\begin{theorem}
  \begin{equation} \gci{\symgp{2}}{\specname{CBC}} = \ci{\con} \circ \gci{\symgp{2}}{\specname{BC}} \label{eq:zcbcdecomp} \end{equation}
\end{theorem}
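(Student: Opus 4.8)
The plan is to obtain the claimed identity by applying the $\Gamma$-species plethysm theorem (\cref{thm:gspeccompci}) to the compositional inversion $\specname{CBC} = \con \circ \specname{BC}$ of \cref{eq:bcdecomp}. The key structural input is already in hand: since reversing colours acts independently on connected components, \cref{eq:bcdecomp} holds as an identity of $\symgp{2}$-species when $\specname{E}^{+}$ is given the trivial $\symgp{2}$-action, and inverting the composition into $\specname{E}^{+}$ transports the trivial action onto $\con$. Consequently $\con$ is a (virtual) $\symgp{2}$-species on which every $\gamma \in \symgp{2}$ acts trivially, so its $\symgp{2}$-cycle index is the constant map $\gamma \mapsto \ci{\con}$, with $\ci{\con}$ given by \cref{eq:zgamma}.

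First I would verify the hypotheses of \cref{thm:gspeccompci} for the composition $\con \circ \specname{BC}$: the inner species is $\specname{BC}$, and by the convention $\specname{BC} \sbrac{\varnothing} = \varnothing$ adopted in \cref{ss:ecibc} the required condition $B \pbrac{\varnothing} = \varnothing$ holds, which is exactly what makes both the composition and its cycle-index plethysm well-defined. Applying the theorem then yields $\gci{\symgp{2}}{\specname{CBC}} = \gci{\symgp{2}}{\con} \circ \gci{\symgp{2}}{\specname{BC}}$. Finally, because the $\symgp{2}$-action on $\con$ is trivial, $\gcielt{\symgp{2}}{\con}{\gamma}$ does not depend on $\gamma$ and equals $\ci{\con}$; feeding this into the plethysm formula of \cref{def:gcipleth} collapses the outer factor to the ordinary cycle index $\ci{\con}$, giving $\gci{\symgp{2}}{\specname{CBC}} = \ci{\con} \circ \gci{\symgp{2}}{\specname{BC}}$ as desired.

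The one genuine obstacle is that $\con$ is a \emph{virtual} species, whereas \cref{thm:gspeccompci} is stated for honest $\Gamma$-species; strictly speaking one must check that the plethysm identity survives passage to the virtual completion. I see two ways to discharge this. The cleaner route avoids virtual composition entirely: apply \cref{thm:gspeccompci} in the honest direction to \cref{eq:bcdecomp}, obtaining $\gci{\symgp{2}}{\specname{BC}} = \ci{\specname{E}^{+}} \circ \gci{\symgp{2}}{\specname{CBC}}$ (again collapsing the trivially-acting outer factor), and then left-compose with $\ci{\con}$. Using associativity of the $\Gamma$-cycle-index plethysm of \cref{def:gcipleth} when the two outer factors carry the trivial action --- which reduces to ordinary plethystic associativity --- together with the plethystic inverse relation $\ci{\con} \circ \ci{\specname{E}^{+}} = p_{1} = \ci{X}$ and the fact that $p_{1}$ is the plethysm identity, this collapses to $\ci{\con} \circ \gci{\symgp{2}}{\specname{BC}} = \gci{\symgp{2}}{\specname{CBC}}$. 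Either argument reduces the whole statement to bookkeeping with \cref{thm:gspeccompci} and the trivial $\symgp{2}$-action on $\con$.
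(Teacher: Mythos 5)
Your main line is exactly the paper's argument: the paper likewise lifts \cref{eq:bcdecomp} to an $\symgp{2}$-species identity (trivial action on $\specname{E}^{+}$), inverts it to $\specname{CBC} = \con \circ \specname{BC}$, and then cites \cref{thm:gspeccompci}, with the outer factor collapsing to $\ci{\con}$ because the action on $\con$ is trivial. Where you go beyond the paper is your final paragraph, and the obstacle you flag is genuine: \cref{thm:gspeccompci} is stated for honest $\Gamma$-species, while $\con = \pbrac{\specname{E} - 1}^{\abrac{-1}}$ exists only as a virtual species, and the paper applies the theorem to $\con \circ \specname{BC}$ without comment --- its entire proof is the phrase ``it then follows immediately.'' Your ``cleaner route'' discharges this correctly: apply \cref{thm:gspeccompci} in the honest direction to $\specname{BC} = \specname{E}^{+} \circ \specname{CBC}$ (the hypothesis holds since $\specname{CBC}\sbrac{\varnothing} = \varnothing$, connected graphs being nonempty), obtaining $\gci{\symgp{2}}{\specname{BC}} = \ci{\specname{E}^{+}} \circ \gci{\symgp{2}}{\specname{CBC}}$, and then left-compose with $\ci{\con}$. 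The associativity you invoke does hold in the restricted setting you need (both outer factors constant in $\gamma$): unwinding the substitution rule of \cref{def:gcipleth}, each $p_{k}$ in the outer constant factor receives $g\pbrac{\gamma^{k}}\pbrac{p_{k}, p_{2k}, \dots}$, and the computation reduces to ordinary plethystic associativity; combined with $\ci{\con} \circ \ci{\specname{E}^{+}} = p_{1}$ this yields the claim. So: same decomposition and same key theorem as the paper, but your version closes a rigor gap that the paper's one-line proof leaves open, at the modest cost of one associativity check.
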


\section{Bipartite graphs}\label{s:bp}
As we previously observed, connected bipartite graphs are naturally identified with orbits of connected bicolored graphs under the color-reversing action of $\symgp{2}$.
Thus,
\begin{equation*}
  \specname{CBP} = \faktor{\specname{CBC}}{\symgp{2}}.
\end{equation*}
By application of \cref{thm:qsci}, we can then directly compute the cycle index of $\specname{CBP}$ in terms of previous results:
\begin{theorem}
  \begin{equation}
    \ci{\specname{CBP}} = \qgci{\symgp{2}}{\specname{CBC}} = \frac{1}{2} \pbrac{\gcielt{\symgp{2}}{\specname{CBC}}{e} + \gcielt{\symgp{2}}{\specname{CBC}}{\tau}}.
  \end{equation}
\end{theorem}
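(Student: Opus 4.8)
The plan is to derive both equalities as immediate consequences of the apparatus already built in \cref{s:quot}; the only genuinely graph-theoretic ingredient is the species identity $\specname{CBP} = \faktor{\specname{CBC}}{\symgp{2}}$ recorded immediately above the statement. Granting that identity, the first equality $\ci{\specname{CBP}} = \qgci{\symgp{2}}{\specname{CBC}}$ is literally the content of \cref{thm:qsci} applied to the $\symgp{2}$-species $F = \specname{CBC}$: that theorem asserts $\ci{\faktor{F}{\Gamma}} = \qgci{\Gamma}{F}$ for \emph{any} $\Gamma$-species, and here one simply takes $\Gamma = \symgp{2}$.

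The step where the real work lives, and hence the main obstacle, is securing the species identity itself. First I would verify that a connected bipartite graph on a label set $A$ is exactly a $\symgp{2}$-orbit of connected bicolored graphs on $A$ under the color-reversing action $\tau$. The key graph-theoretic fact is that a connected bipartite graph admits precisely two proper $2$-colorings and that these are interchanged by $\tau$: fixing the color of a single vertex forces the colors of all others by connectivity together with the edge constraint, so there are exactly two bicolorings and they are color-dual. Thus the fibre of the color-forgetting map $\specname{CBC} \to \specname{CBP}$ over each connected bipartite graph is a single $\symgp{2}$-orbit. I would then check that this correspondence is \emph{natural}---that it commutes with relabeling transport---so that it is a genuine species isomorphism $\faktor{\specname{CBC}}{\symgp{2}} \cong \specname{CBP}$ rather than a cardinality-level bijection; this holds because color-forgetting references no labels, exactly the sort of structural map discussed in \cref{s:introspec}.

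With the identity in hand, the second equality is pure bookkeeping. The group $\symgp{2} = \cbrac{e, \tau}$ has order $\abs{\symgp{2}} = 2$, so the definition $\qgci{\Gamma}{F} \defeq \frac{1}{\abs{\Gamma}} \sum_{\gamma \in \Gamma} \gcielt{\Gamma}{F}{\gamma}$ from \cref{thm:qsci} collapses to a sum of exactly two terms, namely $\frac{1}{2}\pbrac{\gcielt{\symgp{2}}{\specname{CBC}}{e} + \gcielt{\symgp{2}}{\specname{CBC}}{\tau}}$, which is the stated right-hand side. Nothing further need be computed at this point, since the two summands $\gcielt{\symgp{2}}{\specname{CBC}}{e}$ and $\gcielt{\symgp{2}}{\specname{CBC}}{\tau}$ are themselves furnished by the decomposition \cref{eq:zcbcdecomp} together with the earlier evaluations of $\gci{\symgp{2}}{\specname{BC}}$ in \cref{eq:ecibc,eq:tcibc}. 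The entire argument is therefore a one-line application of \cref{thm:qsci}, with all the substance residing in the quotient-species identity established in the opening paragraphs of this section.
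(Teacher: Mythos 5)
Your proposal is correct and follows essentially the same route as the paper: the paper likewise invokes the identity $\specname{CBP} = \faktor{\specname{CBC}}{\symgp{2}}$ (justified in the chapter's introduction by the observation that each connected bipartite graph corresponds to exactly two color-dual bicolored graphs) and then applies \cref{thm:qsci}, with the second equality being just the definition of $\qgci{\symgp{2}}{\specname{CBC}}$ for a group of order two. Your additional verification of the two-coloring fact and of naturality merely makes explicit what the paper leaves as an ``easy exercise.''
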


Finally, to reach a result for the general bipartite case, we return to the graph-theoretic composition relationship previously considered in \cref{s:cbc}:
\begin{equation*}
  \specname{BP} = \specname{E} \circ \specname{CBP}.
\end{equation*}

This time, we need not invert the composition, so the cycle-index calculation is simple:
\begin{theorem}
  \begin{equation}
    \ci{\specname{BP}} = \ci{\specname{E}} \circ \ci{\specname{CBP}}.
  \end{equation}
\end{theorem}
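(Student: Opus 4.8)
The plan is to reduce this cycle-index identity to the structural species identity $\specname{BP} = \specname{E} \circ \specname{CBP}$ stated just above and then invoke the composition theorem. First I would pin down that structural identity, which encodes the graph-theoretic fact that every bipartite graph decomposes uniquely as a set of connected bipartite graphs. The natural transformation to verify sends each bipartite graph to its collection of connected components; I would check that this is a species isomorphism by confirming (i) that each component of a bipartite graph is itself a connected bipartite graph, (ii) that any disjoint union of connected bipartite graphs is again bipartite, (iii) that the decomposition into components is unique, and (iv) that it commutes with transport of labels, so that the induced map $\specname{BP} \to \specname{E} \circ \specname{CBP}$ is natural with bijective components.

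The crucial --- and here, pleasantly easy --- point is that bipartiteness is a \emph{component-local} property: a graph is bipartite if and only if each of its connected components is bipartite. This stands in sharp contrast to the bicolored case treated in \cref{s:cbc}, where a genuine two-coloring must be coordinated across components, forcing us to carry an $\symgp{2}$-action through the decomposition and to invert the composition into $\specname{E}^{+}$ via a virtual species. Since a \emph{bipartite} graph carries no global coloring datum to be reconciled between components, no such coordination is needed and no inversion arises; the appropriate operation is simply the plain set-composition $\specname{E} \circ (-)$.

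Next I would verify the hypothesis required to apply \cref{thm:speccompci}, namely $\specname{CBP} \sbrac{\varnothing} = \varnothing$. This holds because, following the convention adopted in \cref{s:bcgraph} that $\specname{BC} \sbrac{\varnothing} = \varnothing$, our connected bicolored graphs --- and hence the connected bipartite graphs obtained as their $\symgp{2}$-quotients --- are all nonempty. Thus the composition $\specname{E} \circ \specname{CBP}$ is well-defined in the sense of \cref{def:speccomp}, and the plethystic formula for its cycle index is legitimate.

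With the structural identity established and this hypothesis verified, \cref{thm:speccompci} applies directly to yield $\ci{\specname{BP}} = \ci{\specname{E} \circ \specname{CBP}} = \ci{\specname{E}} \circ \ci{\specname{CBP}}$, which is the claim. I expect essentially all of the genuine content to lie in the first step; the remaining steps are the routine invocation of an already-proved theorem. Indeed, the only place a real obstacle could hide is in the component-locality argument, and that is exactly the feature that makes this calculation strictly simpler than its connected-bicolored analogue, since no group action must be threaded through the composition.
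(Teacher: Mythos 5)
Your proposal is correct and follows essentially the same route as the paper, whose own justification is exactly this but terser: invoke the decomposition $\specname{BP} = \specname{E} \circ \specname{CBP}$ and apply \cref{thm:speccompci}, the component-locality and nonemptiness checks you supply being precisely the routine content the paper leaves implicit. One small caveat on your aside: the inversion via the virtual species $\con$ in \cref{s:cbc} was forced by the direction of the computation there ($\specname{BC}$ was the known quantity and $\specname{CBC}$ the unknown, whereas here $\specname{CBP}$ is already in hand and $\specname{BP}$ is solved for by forward composition), not by any need to coordinate colorings across components---indeed the paper notes that $\specname{BC} = \specname{E}^{+} \circ \specname{CBC}$ holds even as an identity of $\symgp{2}$-species---but this does not affect the validity of your proof.
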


A generating function for labeled bipartite graphs was obtained first in \cite{harprins:bipartite} and later in \cite{han:bipartite}; the latter uses P\'{o}lya-theoretic methods to calculate the cycle index of what in modern terminology would be the species of edge-labeled complete bipartite graphs.

\section{Nonseparable graphs}\label{s:nbp}
We now turn our attention to the notions of block decomposition and nonseparable graphs.
A graph is said to be \emph{nonseparable} if it is vertex-$2$-connected (that is, if there exists no vertex whose removal disconnects the graph); every connected graph then has a canonical `decomposition'\footnote{Note that this decomposition does not actually partition the vertices, since many blocks may share a single cut-point, a detail which significantly complicates but does not entirely preclude species-theoretic analysis.} into maximal nonseparable subgraphs, often shortened to \emph{blocks}.
In the spirit of our previous notation, we we will denote by $\specname{NBP}$ the species of nonseparable bipartite graphs, our object of study.

The basic principles of block enumeration in terms of automorphisms and cycle indices of permutation groups were first identified and exploited in \cite{rob:nonsep}.
In \cite[\S 4.2]{bll:species}, a theory relating a specified species $B$ of nonseparable graphs to the species $C_{B}$ of connected graphs whose blocks are in $B$ is developed using similar principles.
It is apparent that the class of nonseparable bipartite graphs is itself exactly the class of blocks that occur in block decompositions of connected bipartite graphs; hence, we apply that theory here to study the species $\specname{NBP}$.
From \cite[eq.~4.2.27]{bll:species} we obtain
\begin{theorem}
  \begin{subequations}
    \label{eq:nbpexp}
    \begin{equation}
      \label{eq:nbpexpmain}
      \specname{NBP} = \specname{CBP} \pbrac{\specname{CBP}^{\bullet \abrac{-1}}} + X \cdot \deriv{\specname{NBP}} - X,
    \end{equation}
    where by \cite[4.2.26(a)]{bll:species} we have
    \begin{equation}
      \label{eq:nbpexpsub}
      \deriv{\specname{NBP}} = \con \pbrac{\frac{X}{\specname{CBP}^{\bullet \abrac{-1}}}}.
    \end{equation}
  \end{subequations}

\end{theorem}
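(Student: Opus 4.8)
The plan is to read this statement as the bipartite specialization of the general block-decomposition theory of \cite[\S 4.2]{bll:species}. That theory attaches to any species of nonseparable graphs---a species of \emph{blocks}---the species of connected graphs all of whose blocks lie in it, and the identities \cite[4.2.27]{bll:species} and \cite[4.2.26(a)]{bll:species} are purely formal consequences valid for \emph{every} such block species. The only content specific to our situation is the verification that $\specname{NBP}$ really is the block species associated to $\specname{CBP}$, i.e.\ that the connected graphs whose blocks all lie in $\specname{NBP}$ are exactly the connected bipartite graphs. Since we have already quotiented by the color-reversing $\symgp{2}$-action, both $\specname{NBP}$ and $\specname{CBP}$ are now ordinary species and no $\Gamma$-action needs to be carried through this argument.

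First I would prove the graph-theoretic lemma that a connected graph is bipartite if and only if each of its blocks is bipartite. One direction is trivial: a block is a subgraph, and every subgraph of a bipartite graph is bipartite. For the converse I would use the standard characterization of bipartiteness by the absence of odd cycles, together with the fact that any cycle is itself $2$-connected and therefore lies entirely within a single maximal nonseparable subgraph, i.e.\ within a single block. Thus an odd cycle anywhere in the graph would already appear inside some block, so if every block is bipartite the whole graph has no odd cycle and is bipartite. This lemma, together with the observation that the blocks of a bipartite graph are by definition nonseparable bipartite graphs, gives the required identification of $\specname{CBP}$ with the species of connected graphs whose blocks all lie in $\specname{NBP}$, structure-for-structure on every label set, so that the general theory applies.

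Next I would recover the two displayed equations from the rooted block decomposition
\[ \specname{CBP}^{\bullet} = X \cdot \specname{E}\pbrac{\deriv{\specname{NBP}}\pbrac{\specname{CBP}^{\bullet}}}, \]
which expresses a vertex-rooted connected bipartite graph as its root together with the set of blocks through that root, each such block rooted at the distinguished vertex (the derivative $\deriv{\specname{NBP}}$) and decorated at its remaining vertices by pendant rooted connected bipartite graphs. For \cref{eq:nbpexpsub}, note that $\specname{CBP}^{\bullet}$ has no constant term and linear part $X$, so its compositional inverse $\specname{CBP}^{\bullet\abrac{-1}}$ exists as a virtual species; substituting it into the decomposition turns the left-hand side into $X$ and yields $\specname{E}\pbrac{\deriv{\specname{NBP}}} = X / \specname{CBP}^{\bullet\abrac{-1}}$, after which applying the compositional logarithm $\con = \pbrac{\specname{E} - 1}^{\abrac{-1}}$ of \cref{eq:zgamma} isolates $\deriv{\specname{NBP}}$ in the form recorded in \cite[4.2.26(a)]{bll:species}. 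For \cref{eq:nbpexpmain}, I would invoke the block-decomposition dissymmetry theorem (reflecting that the vertex-block incidence structure of a connected graph is a tree), which writes $\specname{CBP}$ as the vertex-pointed species plus the block-pointed species minus the (vertex, incident-block)-pointed species; these equal $\specname{CBP}^{\bullet}$, $\specname{NBP}\pbrac{\specname{CBP}^{\bullet}}$, and $\specname{CBP}^{\bullet} \cdot \deriv{\specname{NBP}}\pbrac{\specname{CBP}^{\bullet}}$ respectively. Substituting $\specname{CBP}^{\bullet\abrac{-1}}$ into each sends these to $X$, $\specname{NBP}$, and $X \cdot \deriv{\specname{NBP}}$, and rearranging the resulting identity for $\specname{CBP}\pbrac{\specname{CBP}^{\bullet\abrac{-1}}}$ gives exactly \cref{eq:nbpexpmain}.

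I expect the main obstacle to be the careful accounting of the degenerate blocks---the single edge and, above all, the isolated vertex---which is precisely what deforms a clean composition into the corrected form $+ X \cdot \deriv{\specname{NBP}} - X$ and which governs the constant-term conventions under which the compositional inverse and the logarithm $\con$ must be read. One must check that the treatment of these trivial structures in $\specname{NBP}$ matches the conventions baked into \cite[\S 4.2]{bll:species} so that its general formulas transfer verbatim. By comparison the bipartite lemma is elementary and the compositional inversions, once the conventions are pinned down, are formal manipulations in the ring of virtual species.
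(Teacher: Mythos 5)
Your proposal is correct and follows essentially the same route as the paper: both identify $\specname{CBP}$ as the species of connected graphs whose blocks lie in $\specname{NBP}$ and then specialize the general block-decomposition theory of \cite[\S 4.2]{bll:species}. The difference is only one of detail---the paper declares the identification ``apparent'' and cites \cite[4.2.27]{bll:species} and \cite[4.2.26(a)]{bll:species} outright, whereas you supply the odd-cycle/blocks lemma and re-derive those identities from the rooted block decomposition and the vertex--block dissymmetry theorem, which is exactly how they are established in that reference.
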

We have already calculated the cycle index for the species $\specname{CBP}$, so the calculation of the cycle index of $\specname{NBP}$ is now simply a matter of algebraic expansion.

A generating function for labeled bipartite blocks was given in \cite{harrob:bipblocks}, where their analogue of \cref{eq:nbpexp} for the labeled exponential generating function for blocks comes from \cite{forduhl:combprob1}.
However, we could locate no corresponding unlabeled enumeration in the literature.
The numbers of labeled and unlabeled nonseparable bipartite graphs for $n \leq 10$ as calculated using our method are given in \cref{tab:bpblocks}.

\chapter{The species of $k$-trees}\label{c:ktrees}
\section{Introduction}\label{s:intro}
\subsection{$k$-trees}\label{ss:ktrees}
Trees and their generalizations have played an important role in the literature of combinatorial graph theory throughout its history.
The multi-dimensional generalization to so-called `$k$-trees' has proved to be particularly fertile ground for both research problems and applications.

The class $\kt{k}$ of $k$-trees (for $k \in \ringname{N}$) may be defined recursively:
\begin{definition}
  \label{def:ktree}
  The complete graph on $k$ vertices ($K_{k}$) is a $k$-tree, and any graph formed by adding a single vertex to a $k$-tree and connecting that vertex by edges to some existing $k$-clique (that is, induced $k$-complete subgraph) of that $k$-tree is a $k$-tree.
\end{definition}

The graph-theoretic notion of $k$-trees was first introduced in 1968 in \cite{harpalm:acycsimp}; vertex-labeled $k$-trees were quickly enumerated in the following year in both \cite{moon:lktrees} and \cite{beinpipp:lktrees}.
The special case $k=2$ has been especially thoroughly studied; enumerations are available in the literature for edge- and triangle-labeled $2$-trees in \cite{palm:l2trees}, for plane $2$-trees in \cite{palmread:p2trees}, and for unlabeled $2$-trees in \cite{harpalm:acycsimp} and \cite{harpalm:graphenum}.
In 2001, the theory of species was brought to bear on $2$-trees in \cite{gessel:spec2trees}, resulting in more explicit formulas for the enumeration of unlabeled $2$-trees.
An extensive literature on other properties of $k$-trees and their applications has also emerged; Beineke and Pippert claim in \cite{beinpipp:multidim} that ``[t]here are now over 100 papers on various aspects of $k$-trees''.
However, no general enumeration of unlabeled $k$-trees appears in the literature to date.

To begin, we establish two definitions for substructures of $k$-trees which we will use extensively in our analysis.
\begin{definition}
  \label{def:hedfront}
  A \emph{hedron} of a $k$-tree is a $\pbrac{k+1}$-clique and a \emph{front} is a $k$-clique.
\end{definition}
We will frequently describe $k$-trees as assemblages of hedra attached along their fronts rather than using explicit graph-theoretic descriptions in terms of edges and vertices, keeping in mind that the structure of interest is graph-theoretic and not geometric.
The recursive addition of a single vertex and its connection by edges to an existing $k$-clique in \cref{def:ktree} is then interpreted as the attachment of a hedron to an existing one along some front, identifying the $k$ vertices they have in common.
The analogy to the recursive definition of conventional trees is clear, and in fact the class $\mathfrak{a}$ of trees may be recovered by setting $k = 1$.
For higher $k$, the structures formed are still distinctively tree-like; for example, $2$-trees are formed by gluing triangles together along their edges without forming loops of triangles (see \cref{fig:ex2tree}), while $3$-trees are formed by gluing tetrahedra together along their triangular faces without forming loops of tetrahedra.

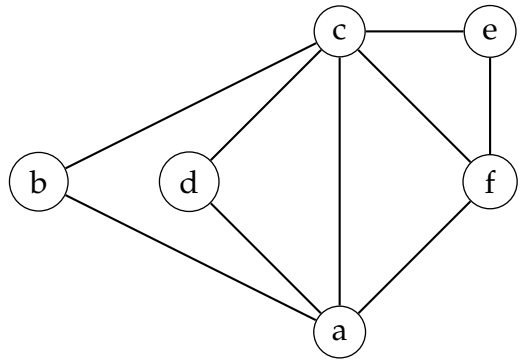
\begin{figure}[htb]
  \centering
  \begin{tikzpicture}
    \SetGraphUnit{2}
    \GraphInit[vstyle=normal]
    \Vertex{a}

    \NOWE(a){d}
    \Edge(a)(d)

    \NOEA(d){c}
    \Edge(a)(c)
    \Edge(d)(c)
    
    \WE(d){b}
    \Edge(a)(b)
    \Edge(c)(b)
    
    \NOEA(a){f}
    \Edge(a)(f)
    \Edge(c)(f)
    
    \NO(f){e}
    \Edge(f)(e)
    \Edge(c)(e)
  \end{tikzpicture}
  \caption{A (vertex-labeled) $2$-tree}
  \label{fig:ex2tree}
\end{figure}

In graph-theoretic contexts, it is conventional to label graphs on their vertices and possibly their edges.
However, for our purposes, it will be more convenient to label hedra and fronts.
Throughout, we will treat the species $\kt{k}$ of $k$-trees as a two-sort species, with $X$-labels on the hedra and $Y$-labels on their fronts; in diagrams, we will generally use capital letters for the hedron-labels and positive integers for the front-labels (see \cref{fig:exlab2tree}).

\begin{figure}[htb]
  \centering
  \begin{tikzpicture}
    \SetGraphUnit{3}
    \GraphInit[vstyle=Hasse]
    \SetUpEdge[labelstyle={draw}]

    \Vertex{a}

    \NOWE(a){d}
    \Edge[label=2](a)(d)

    \NOEA(d){c}
    \Edge[label=4](a)(c)
    \Edge[label=5](d)(c)

    \node at (barycentric cs:a=1,d=1,c=1) {B};
    
    \WE(d){b}
    \Edge[label=1](a)(b)
    \Edge[label=6](c)(b)

    \node at (barycentric cs:b=1,d=1) {D};
    
    \NOEA(a){f}
    \Edge[label=9](a)(f)
    \Edge[label=3](c)(f)

    \node at (barycentric cs:a=1,c=1,f=1) {C};
    
    \NO(f){e}
    \Edge[label=7](f)(e)
    \Edge[label=8](c)(e)

    \node at (barycentric cs:f=1,c=1,e=1) {A};

  \end{tikzpicture}
  \caption{A (hedron-and-front--labeled) $2$-tree}
  \label{fig:exlab2tree}
\end{figure}
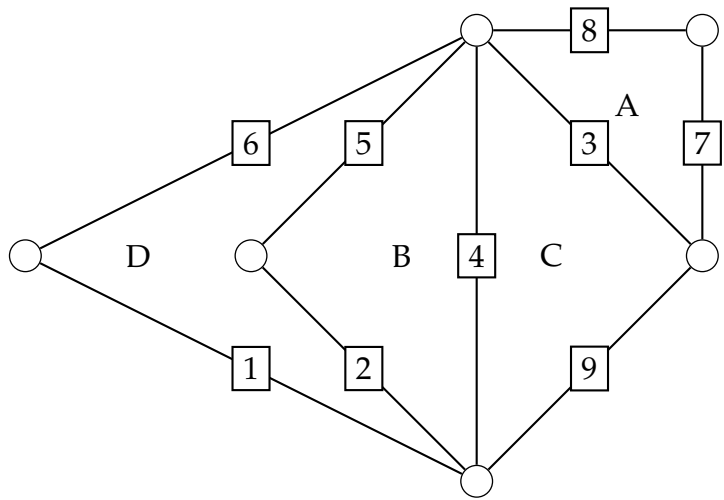

\section{The dissymmetry theorem for $k$-trees}\label{s:dissymk}
Studies of tree-like structures---especially those explicitly informed by the theory of species---often feature decompositions based on \emph{dissymmetry}, which allow enumerations of unrooted structures to be recharacterized in terms of rooted structures.
For example, as seen in \cite[\S 4.1]{bll:species}, the species $\mathfrak{a}$ of trees and $\specname{A} = \pointed{\mathfrak{a}}$ of rooted trees are related by the equation
\begin{equation*}
  \specname{A} + \specname{E}_{2} \pbrac{\specname{A}} = \mathfrak{a} + \specname{A}^{2}
\end{equation*}
where the proof hinges on a recursive structural decomposition of trees.
In this case, the species $\specname{A}$ is relatively easy to characterize explicitly, so this equation serves to characterize the species $\mathfrak{a}$, which would be difficult to do directly.

A similar theorem holds for $k$-trees.
\begin{theorem}
  \label{thm:dissymk}
  The species $\ktx{k}$ and $\kty{k}$ of $k$-trees rooted at hedra and fronts respectively, $\ktxy{k}$ of $k$-trees rooted at a hedron with a designated front, and $\kt{k}$ of unrooted $k$-trees are related by the equation
  \begin{equation}
    \label{eq:dissymk}
    \ktx{k} + \kty{k} = \kt{k} + \ktxy{k}
  \end{equation}
  as an isomorphism of species.
\end{theorem}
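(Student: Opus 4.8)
The plan is to recognize \cref{eq:dissymk} as an instance of the general dissymmetry theorem for trees, applied not to $k$-trees directly but to an auxiliary bipartite tree that encodes their hedron-front incidence structure. First I would associate to each $k$-tree $Q$ its \emph{incidence tree} $\mathcal{B}(Q)$: the bipartite graph whose nodes are the hedra and fronts of $Q$, with an edge joining a hedron $h$ to a front $f$ exactly when $f \subset h$ (that is, when $f$ is one of the $k+1$ faces of $h$). The key structural lemma is that $\mathcal{B}(Q)$ is in fact a tree. Connectivity is immediate from the recursive construction in \cref{def:ktree}, and acyclicity can be read off from a count: a $k$-tree with $m$ hedra has exactly $km+1$ fronts (the initial $K_{k+1}$ contributing $k+1$ and each attachment contributing $k$ new fronts), hence $\mathcal{B}(Q)$ has $(k+1)m+1$ nodes and $(k+1)m$ incidence edges, so a connected graph with this count is a tree. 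Under this correspondence the four species in play become rootings of $\mathcal{B}(Q)$: $\ktx{k}$ roots at a hedron-node, $\kty{k}$ at a front-node, $\ktxy{k}$ at an incident pair $(h,f)$ --- which is precisely an edge of $\mathcal{B}(Q)$, i.e.\ a hedron together with a designated front --- and $\kt{k}$ is the unrooted structure. Thus \cref{eq:dissymk} reduces to the assertion that, for the bipartite tree $\mathcal{B}(Q)$, rooting at a node (of either color) is equinumerous with the unrooted structure together with rooting at an edge.

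This reduced statement I would prove by the classical device of the canonical center, exactly as in the ordinary tree case of \cite{bll:species}. Every finite tree has a Jordan center --- obtained by iteratively pruning leaves --- which is either a single node or a single edge and is invariant under all automorphisms. I would define a map $\Theta$ from node-rooted incidence trees to unrooted-or-edge-rooted incidence trees as follows: given $(Q,v)$ with $v$ a node of $\mathcal{B}(Q)$, compute the center; if $v$ is the central node, send $(Q,v)$ to the unrooted tree $Q$, and otherwise send $(Q,v)$ to $(Q, e_v)$, where $e_v$ is the unique first edge on the geodesic from $v$ toward the center. Naturality of $\Theta$ --- hence that it is a genuine species isomorphism rather than a mere numerical identity --- is guaranteed because both the center and the geodesics are defined purely structurally and so commute with the two-sort relabeling transport.

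The one point requiring care, and the place where the bipartite structure earns its keep, is the case in which the center of $\mathcal{B}(Q)$ is an \emph{edge} $e^\ast = \{h^\ast, f^\ast\}$ rather than a node. Then no node is ``the center'', so the naive rule never produces the unrooted tree $Q$, while both endpoints $h^\ast$ and $f^\ast$ would be sent to the same edge-rooted structure $(Q,e^\ast)$, and the map fails to be either injective or surjective on these fibers. The repair is to break the tie canonically using the colors: declare that the hedron-endpoint $h^\ast$ is sent to the unrooted tree $Q$, while the front-endpoint $f^\ast$ (and every other node) is sent to its toward-center edge as before. A short check then confirms $\Theta$ is a bijection: in the central-node case each edge-rooted $(Q,e)$ is hit exactly once (by the endpoint of $e$ on the side of $e$ away from the center) and $Q$ exactly once (by the central node); in the central-edge case $Q$ is hit once (by $h^\ast$), $(Q,e^\ast)$ once (by $f^\ast$), and every other $(Q,e)$ once. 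Because $\mathcal{B}(Q)$ is bipartite, every edge has a well-defined hedron-end and front-end, so this tie-breaking is always available and canonical.

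I expect the main obstacle to be the structural lemma that $\mathcal{B}(Q)$ is a tree --- specifically, justifying rigorously from \cref{def:ktree} (rather than from the informal ``no loops of hedra'' picture) that the recursive construction never re-creates an existing front and that the node- and edge-counts are exactly those tallied above, so that connectivity forces acyclicity. Once that lemma and the automorphism-invariance of the Jordan center are in hand, the remaining verification that $\Theta$ is a natural bijection is the routine case analysis sketched above.
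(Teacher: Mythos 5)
Your proposal is correct, and it arrives at exactly the same bijection as the paper's proof---a root at the center is sent to the unrooted structure, and any other root is sent to the adjacent pair pointing one step toward the center---but the scaffolding you build underneath it is genuinely different. The paper works directly inside the $k$-tree: it defines $k$-paths (alternating sequences of consecutively adjacent hedra and fronts), observes that maximal $k$-paths end at fronts, and simply cites \cite{kob:ktlogspace} for the fact that every $k$-tree has a unique center clique (a hedron or a front) which is the midpoint of every longest $k$-path; the two-case map is then exactly yours. You instead transport the problem to the hedron--front incidence tree $\mathcal{B}(Q)$ and invoke the classical Jordan center, which makes the argument self-contained: your counting lemma ($km+1$ fronts and $(k+1)m$ incidences for $m$ hedra, so connectivity forces acyclicity) is a genuine substitute for the citation, and your $\mathcal{B}(Q)$ anticipates the bicolored-tree encoding that the paper itself develops later in \cref{ss:bctree}. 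One remark: the central-edge case that you take pains to repair is vacuous. Every hedron-node of $\mathcal{B}(Q)$ has degree $k+1 \geq 2$, so all leaves are front-nodes; in a tree whose leaves all lie in one class of the bipartition, every diameter path joins two fronts and therefore has even length, which forces the Jordan center to be a single node, never an edge. This is precisely the content of the paper's formulation (``the center is a clique''), and it is why the paper's observation that maximal $k$-paths end at fronts is not idle. Your color tie-breaking rule is natural and would yield a bijection if the case ever arose, so nothing is broken; it is simply never needed, and noting its vacuity would shorten your argument to match the paper's.
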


\begin{proof}
  We give a bijective, natural map from $\pbrac{\ktx{k} + \kty{k}}$-structures on the left side to $\pbrac{\kt{k} + \ktxy{k}}$-structures on the right side.
  Define a \emph{$k$-path} in a $k$-tree to be a non-self-intersecting sequence of consecutively adjacent hedra and fronts, and define the \emph{length} of a $k$-path to be the total number of hedra and fronts along it.
  Note that the ends of every maximal $k$-path in a $k$-tree are fronts.
  It is easily verified, as in \cite{kob:ktlogspace}, that every $k$-tree has a unique \emph{center} clique (either a hedron or a front) which is the midpoint of every longest $k$-path (or, equivalently, has the greatest $k$-eccentricity, defined appropriately).
  
  An $\pbrac{\ktx{k} + \kty{k}}$-structure on the left-hand side of the equation is a $k$-tree $T$ rooted at some clique $c$, which is either a hedron or a front.
  Suppose that $c$ is the center of $T$.
  We then map $T$ to its unrooted equivalent in $\kt{k}$ on the right-hand side.
  This map is a natural bijection from its preimage, the set of $k$-trees rooted at their centers, to $\kt{k}$, the set of unrooted $k$-trees.

  Now suppose that the root clique $c$ of the $k$-tree $T$ is \emph{not} the center, which we denote $C$.
  Identify the clique $c'$ which is adjacent to $c$ along the $k$-path from $c$ to $C$.
  We then map the $k$-tree $T$ rooted at the clique $c$ to the same tree $T$ rooted at \emph{both} $c$ and its neighbor $c'$.
  This map is also a natural bijection, in this case from the set of $k$-trees rooted at vertices which are \emph{not} their centers to the set $\ktxy{k}$ of $k$-trees rooted at an adjacent hedron-front pair.

  The combination of these two maps then gives the desired isomorphism of species in \cref{eq:dissymk}.
\end{proof}

In general we will reformulate the dissymmetry theorem as follows:
\begin{corollary}
  \label{cor:dissymkreform}
  For the various forms of the species $\kt{k}$ as above, we have
  \begin{equation}
    \label{eq:dissymkreform}
    \kt{k} = \ktx{k} + \kty{k} - \ktxy{k}.
  \end{equation}
  as an isomorphism of ordinary species.
\end{corollary}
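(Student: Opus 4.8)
The plan is to obtain \cref{eq:dissymkreform} as a direct algebraic consequence of the dissymmetry equation \cref{eq:dissymk} established in \cref{thm:dissymk}, with the only genuine content being a justification that the subtraction appearing on the right-hand side is meaningful. First I would recall that species form an abelian monoid under the sum of \cref{def:specsum}, and that this monoid embeds into the abelian group of virtual species discussed in \cref{s:specalg}; in that group, subtraction of arbitrary species is defined and agrees with the difference of \cref{def:specdif} wherever the latter applies.

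Next I would observe that the bijective, natural map constructed in the proof of \cref{thm:dissymk} exhibits $\ktxy{k}$ as a natural summand of $\ktx{k} + \kty{k}$. Indeed, the isomorphism $\ktx{k} + \kty{k} \cong \kt{k} + \ktxy{k}$ identifies the $\ktxy{k}$ factor on the right with a subspecies of the left---precisely the image of those rooted $k$-trees whose root clique is \emph{not} the center---thereby furnishing a concrete species embedding $\ktxy{k} \hookrightarrow \ktx{k} + \kty{k}$ in the sense of \cref{def:specmaptypes}. Consequently the difference $\ktx{k} + \kty{k} - \ktxy{k}$ is well-defined already as an ordinary (nonvirtual) species in the sense of \cref{def:specdif}, and it coincides with its virtual-species interpretation, so no appeal to the full machinery of virtual species is strictly required.

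With the subtraction legitimized, the remainder is pure rearrangement: adding the additive inverse of $\ktxy{k}$ to both sides of \cref{eq:dissymk} and invoking the commutativity and associativity of the species sum yields
\[
  \kt{k} = \ktx{k} + \kty{k} - \ktxy{k},
\]
as claimed. I expect no obstacle beyond the bookkeeping of this subtraction; all the substantive combinatorics---the center-finding argument and the two natural bijections---already resides in \cref{thm:dissymk}, and this corollary merely repackages that result in the additive form that will be convenient for the subsequent cycle-index computations.
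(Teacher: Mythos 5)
Your proposal is correct and follows essentially the same route as the paper: both justify the subtraction via \cref{def:specdif} by noting that the map from the proof of \cref{thm:dissymk} embeds $\ktxy{k}$ into $\ktx{k} + \kty{k}$ (with complement the $k$-trees rooted at their centers, i.e.\ a copy of $\kt{k}$), and then rearrange \cref{eq:dissymk}. Your additional framing through virtual species is harmless but, as you yourself note, unnecessary, and the paper omits it.
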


This species subtraction is well-defined in the sense of \cref{def:specdif}, since the species $\ktxy{k}$ embeds in the species $\ktx{k} + \kty{k}$ by the centering map described in the proof of \cref{thm:dissymk}.
Essentially, \cref{eq:dissymkreform} identifies each unrooted $k$-tree with itself rooted at its center simplex.

\Cref{thm:dissymk} and the consequent \cref{eq:dissymkreform} allow us to reframe enumerative questions about generic $k$-trees in terms of questions about $k$-trees rooted in various ways.
However, the rich internal symmetries of large cliques obstruct direct analysis of these rooted structures.
We need to break these symmetries to proceed.

\section{Coherently-oriented $k$-trees}
\subsection{Symmetry-breaking}\label{ss:symbreak}
In the case of the species $\specname{A} = \pointed{\kt{1}}$ of rooted trees, we may obtain a simple recursive functional equation \cite[\S 1, eq.~(9)]{bll:species}:
\begin{equation}
  \label{eq:rtrees}
  \specname{A} = X \cdot \specname{E} \pbrac{\specname{A}}.
\end{equation}
This completely characterizes the combinatorial structure of the class of trees.

However, in the more general case of $k$-trees, no such simple relationship obtains; attached to a given hedron is a collection of sets of hedra (one such set per front), but simply specifying which fronts to attach to which does not fully specify the attachings, and the structure of that collection of sets is complex.
We will break this symmetry by adding additional structure which we can later remove using the theory of quotient species.

\begin{definition}
  \label{def:mirrorfronts}
  Let $h_{1}$ and $h_{2}$ be two hedra joined at a front $f$, hereafter said to be \emph{adjacent}.
  Each other front of one of the hedra shares $k-1$ vertices with $f$; we say that two fronts $f_{1}$ of $h_{1}$ and $f_{2}$ of $h_{2}$ are \emph{mirror with respect to $f$} if these shared vertices are the same, or equivalently if $f_{1} \cap f = f_{2} \cap f$.
\end{definition}

\begin{observation}
  \label{obs:mirrorfronts}
  Let $T$ be a coherently-oriented $k$-tree with two hedra $h_{1}$ and $h_{2}$ joined at a front $f$.
  Then there is exactly one front of $h_{2}$ mirror to each front of $h_{1}$ with respect to their shared front $f$.
\end{observation}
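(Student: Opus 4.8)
The plan is to argue purely from the local combinatorics of two adjacent hedra, since the statement is entirely about the structure at a single shared front $f$. Recall that a hedron is a $(k+1)$-clique and a front is a $k$-clique, so a hedron has exactly $k+1$ fronts, each obtained by deleting one of its $k+1$ vertices. First I would fix the shared front $f$, which is a common $k$-clique of $h_1$ and $h_2$. Since $h_1$ and $h_2$ are each $(k+1)$-cliques containing $f$, each has exactly one vertex outside $f$; call these $v_1 \in h_1 \setminus f$ and $v_2 \in h_2 \setminus f$, with $v_1 \neq v_2$ (else $h_1 = h_2$). Thus $h_1 = f \cup \{v_1\}$ and $h_2 = f \cup \{v_2\}$.

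\textbf{Counting fronts other than $f$.} Next I would enumerate the fronts of $h_1$ distinct from $f$. Since $f = h_1 \setminus \{v_1\}$, the front $f$ is the unique front of $h_1$ \emph{not} containing $v_1$; every \emph{other} front of $h_1$ is obtained by deleting one of the $k$ vertices of $f$ and must therefore contain $v_1$. Writing $f = \{u_1, \dots, u_k\}$, the fronts of $h_1$ other than $f$ are exactly the sets $f_1^{(i)} := (f \setminus \{u_i\}) \cup \{v_1\}$ for $i = 1, \dots, k$, and symmetrically the fronts of $h_2$ other than $f$ are $f_2^{(i)} := (f \setminus \{u_i\}) \cup \{v_2\}$. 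The key intersection computation is then immediate: $f_1^{(i)} \cap f = f \setminus \{u_i\} = f_2^{(j)} \cap f$ if and only if $i = j$.

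\textbf{Establishing the bijection.} With these parametrizations in hand, the mirror condition from \cref{def:mirrorfronts} — that $f_1 \cap f = f_2 \cap f$ — becomes the statement that $f_1^{(i)}$ and $f_2^{(j)}$ are mirror precisely when $f \setminus \{u_i\} = f \setminus \{u_j\}$, i.e.\ when $i = j$. This yields the desired bijection $f_1^{(i)} \mapsto f_2^{(i)}$ between the $k$ non-$f$ fronts of $h_1$ and the $k$ non-$f$ fronts of $h_2$: each front of $h_1$ other than $f$ has \emph{exactly one} mirror front of $h_2$, namely the one deleting the same vertex $u_i$ of the shared front. This is exactly the content of \cref{obs:mirrorfronts}.

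\textbf{Expected obstacle.} I do not expect a genuine mathematical obstacle here — the result is essentially a bookkeeping identity about sub-cliques of two $(k+1)$-cliques sharing a common $k$-face. The only points demanding care are (i) checking that the map $i \mapsto (f \setminus \{u_i\})$ is injective, so that distinct deleted vertices give distinct shared $(k-1)$-sets, which holds because $f$ is a $k$-element set and removing different elements yields different $(k-1)$-subsets; and (ii) confirming that $f$ itself is correctly excluded from consideration on both sides, so that we are genuinely matching the $k$ \emph{other} fronts rather than accidentally pairing $f$ with itself. The phrase ``coherently-oriented'' in the hypothesis plays no role in this particular observation — it is a property of $T$ used elsewhere — so I would note that the orientation hypothesis can be dropped for this statement, or simply carry it along harmlessly.
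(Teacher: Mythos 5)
Your proof is correct, and it is exactly the verification the paper leaves implicit: the paper states this as an Observation with no proof at all, treating it as evident bookkeeping, and your parametrization of the non-$f$ fronts of $h_{i}$ as $\pbrac{f \setminus \cbrac{u}} \cup \cbrac{v_{i}}$ for $u$ ranging over the $k$ vertices of $f$, with the mirror condition $f_{1} \cap f = f_{2} \cap f$ forcing the deleted vertex $u$ to agree, is the natural argument behind it. Your closing remark is also on point: the coherent-orientation hypothesis is indeed inert here, and it must be, since the mirror bijection established by this observation is precisely what \cref{def:coktree} then uses to define when the orientations of two adjacent hedra are compatible --- so the observation cannot logically presuppose coherence without circularity (note that the paper even states the observation before coherence is defined).
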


\begin{definition}
  \label{def:coktree}
  Define an \emph{orientation} of a hedron to be a cyclic ordering of the set of its fronts and an \emph{orientation} of a $k$-tree to be a choice of orientation for each of its hedra.
  If two oriented hedra share a front, their orientations are \emph{compatible} if they correspond under the mirror bijection.
  Then an orientation of a $k$-tree is \emph{coherent} if every pair of adjacent hedra is compatibly-oriented.
\end{definition}
See \cref{fig:exco2tree} for an example.
Note that every $k$-tree admits many coherent orientations---any one hedron of the $k$-tree may be oriented freely, and a unique orientation of the whole $k$-tree will result from each choice of such an orientation of one hedron.
We will denote by $\ktco{k}$ the species of coherently-oriented $k$-trees.

By shifting from the general $k$-tree setting to that of coherently-oriented $k$-trees, we break the symmetry described above.
If we can now establish a group action on $\ktco{k}$ whose orbits are generic $k$-trees we can use the theory of quotient species to extract the generic species $\kt{k}$.
First, however, we describe an encoding procedure which will make future work more convenient.

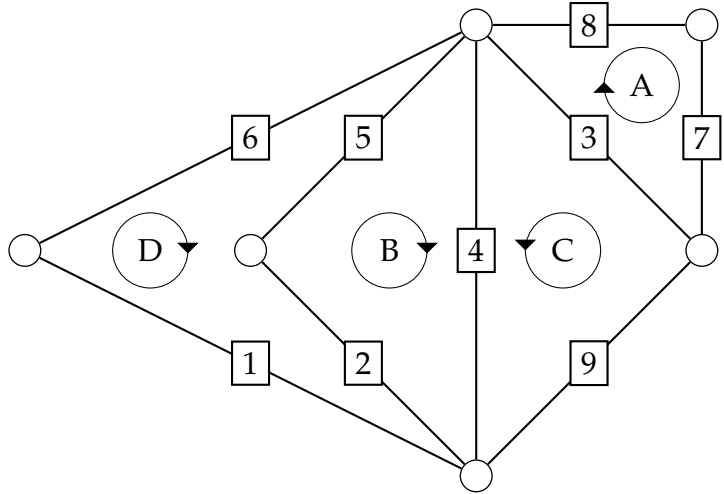
\begin{figure}[htb]
  \centering
  \begin{tikzpicture}
    \SetGraphUnit{3}
    \GraphInit[vstyle=Hasse]
    \SetUpEdge[labelstyle={draw}]

    \Vertex{a}

    \NOWE(a){d}
    \Edge[label=2](a)(d)

    \NOEA(d){c}
    \Edge[label=4](a)(c)
    \Edge[label=5](d)(c)

    \coordinate (B) at (barycentric cs:a=1,d=1.25,c=1);
    \node at (B) {B};
    \cyccwr[.5cm]{(B)};
    
    \WE(d){b}
    \Edge[label=1](a)(b)
    \Edge[label=6](c)(b)

    \coordinate (D) at (barycentric cs:b=1,d=1.25);
    \node at (D) {D};
    \cyccwr[.5cm]{(D)};
    
    \NOEA(a){f}
    \Edge[label=9](a)(f)
    \Edge[label=3](c)(f)

    \coordinate (C) at (barycentric cs:a=1,c=1,f=1.25);
    \node at (C) {C};
    \cycccwl[.5cm]{(C)};
    
    \NO(f){e}
    \Edge[label=7](f)(e)
    \Edge[label=8](c)(e)

    \coordinate (A) at (barycentric cs:f=1,c=1,e=1.75);
    \node at (A) {A};
    \cyccwl[.5cm]{(A)};
    
  \end{tikzpicture}
  \caption{A coherently-oriented $2$-tree}
  \label{fig:exco2tree}
\end{figure}

\subsection{Bicolored tree encoding}\label{ss:bctree}
Although $k$-trees are graphs (and hence made up simply of edges and vertices), their structure is more conveniently described in terms of their simplicial structure of hedra and fronts.
Indeed, if each hedron has an orientation of its faces and we choose in advance which hedra to attach to which by what fronts, the requirement that the resulting $k$-tree be coherently oriented is strong enough to characterize the attaching completely.
We thus pass from coherently-oriented $k$-trees to a surrogate structure which exposes the salient features of this attaching structure more clearly---structured bicolored trees in the spirit of the $R, S$-enriched bicolored trees of \cite[\S 3.2]{bll:species}.

A $\pbrac{\specname{C}_{k+1}, \specname{E}}$-enriched bicolored tree is a bicolored tree each black vertex of which carries a $\specname{C}_{k+1}$-structure (that is, a cyclic ordering on $k+1$ elements) on its white neighbors.
(The $\specname{E}$-structure on the black neighbors of each white vertex is already implicit in the bicolored tree itself.)
For later convenience, we will sometimes call such objects \emph{$k$-coding trees}, and we will denote by $\ct{k}$ the species of such $k$-coding trees.

We now define a map $\beta: \ktco{k} \sbrac{n} \to \ct{k} \sbrac{n}$.
For a given coherently-oriented $k$-tree $T$ with $n$ hedra:
\begin{itemize}
  \item For every hedron of $T$ construct a black vertex and for every front a white vertex, assigning labels appropriately.
  \item For every black-white vertex pair, construct a connecting edge if the white vertex represents a front of the hedron represented by the black vertex.
  \item Finally, enrich the collection of neighbors of each black vertex with a $\specname{C}_{k+1}$-structure inherited directly from the orientation of the $k$-tree $T$.
\end{itemize}
The resulting object $\beta \pbrac{T}$ is clearly a $k$-coding tree with $n$ black vertices.

We can recover a $T$ from $\beta \pbrac{T}$ by following the reverse procedure.
For an example, see \cref{fig:exbctree}, which shows the $2$-coding tree associated to the coherently-oriented $2$-tree of \cref{fig:exco2tree}.
Note that, for clarity, we have rendered the black vertices (corresponding to hedra) with squares.

\begin{figure}[htb]
  \centering
  \begin{tikzpicture}
    \SetGraphUnit{1.5}
    \GraphInit[vstyle=normal]

    \Vertex[style=ynode]{4}

    \SOWE[style=xnode](4){B}
    \cyccwr{(B)};
    \Edge(4)(B)

    \WE[style=ynode](B){5}
    \Edge(B)(5)

    \SOEA[style=ynode](B){2}
    \Edge(B)(2)

    \NOWE[style=xnode](4){D}
    \cyccwr{(D)};
    \Edge(4)(D)
    
    \WE[style=ynode](D){1}
    \Edge(D)(1)

    \NOEA[style=ynode](D){6}
    \Edge(D)(6)

    \EA[style=xnode](4){C}
    \cycccwr{(C)};
    \Edge(4)(C)

    \SOEA[style=ynode](C){9}
    \Edge(C)(9)

    \NOEA[style=ynode](C){3}
    \Edge(C)(3)

    \NO[style=xnode](3){A}
    \cyccwl{(A)};
    \Edge(3)(A)

    \NOWE[style=ynode](A){8}
    \Edge(A)(8)

    \NOEA[style=ynode](A){7}
    \Edge(A)(7)
    
  \end{tikzpicture}
  \caption{A $\pbrac{\specname{C}_{k+1}, \specname{E}}$-enriched bicolored tree encoding a coherently-oriented $2$-tree}
  \label{fig:exbctree}
\end{figure}

\begin{theorem}\label{thm:bctreeenc}
  The map $\beta$ induces an isomorphism of species $\ktco{k} \simeq \ct{k}$.
\end{theorem}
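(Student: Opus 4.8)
The plan is to unpack the definition of a species isomorphism from \cref{def:specmaptypes}: I must show that $\beta$ is a natural transformation all of whose components are bijections. Since the text has already described the forward construction and asserted that $\beta \pbrac{T}$ is a genuine $k$-coding tree, the two tasks that remain are, first, to check naturality---that $\beta$ commutes with relabeling in both sorts---and, second, to exhibit a two-sided inverse by making the ``reverse procedure'' precise and verifying that it indeed lands in $\ktco{k}$.

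Naturality I expect to be routine. A morphism in $\catname{FinBij}^{2}$ is a pair of bijections permuting the hedron-labels and the front-labels, and under $\beta$ these become exactly the relabelings of the black and white vertices of the associated coding tree. Because every step in the construction of $\beta$ refers only to incidence data (which front belongs to which hedron) and to the inherited $\specname{C}_{k+1}$-orientation, and never to the particular values of the labels, relabeling before applying $\beta$ agrees with applying $\beta$ and then relabeling. Instantiating the square of \cref{def:specmap} with $\beta$ on the vertical arrows and the transports on the horizontal arrows, commutativity is then immediate.

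The substance of the proof is the construction of $\beta^{-1}$. Given a $k$-coding tree, I would create one hedron for each black vertex, recording its $k+1$ fronts together with their cyclic order (the $\specname{C}_{k+1}$-structure on its white neighbors), and then glue two hedra along a front exactly when they share the corresponding white vertex. The difficulty is that the coding tree records only \emph{which} fronts are shared, not \emph{how} the $k$ vertices of a shared front are to be identified; I must recover these identifications and then confirm that the result is a coherently-oriented $k$-tree in the sense of \cref{def:coktree}. The key tool is \cref{obs:mirrorfronts}: once a shared front $f$ is fixed, coherence demands that the cyclic orders of the two incident hedra correspond under the mirror bijection, and this correspondence pins down, front-by-front, how the remaining vertices of one hedron are identified with those of the other. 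I would therefore argue by induction along the tree---rooting the coding tree at an arbitrary black vertex, reconstructing that hedron freely, and at each step attaching the next hedron along its unique shared front, where the mirror condition uniquely determines the gluing. Because the coding tree is genuinely a tree, there are no cycles of hedra along which independently-propagated identifications could clash, so the reconstruction is consistent and globally well-defined.

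Finally, I would verify that $\beta \circ \beta^{-1}$ and $\beta^{-1} \circ \beta$ are the respective identities: the forward map forgets exactly the vertex-level data that the reverse map reconstructs, and the inductive reconstruction returns precisely the orientation and incidence data that $\beta$ recorded. The main obstacle is the well-definedness of $\beta^{-1}$---specifically, proving that the local orientation data forces a unique global attaching and that the recovered object satisfies the coherence condition. I expect the tree structure (the absence of loops of hedra) together with \cref{obs:mirrorfronts} to do the real work here, exactly in the spirit of the earlier claim that specifying orientations together with an abstract attaching scheme determines a coherently-oriented $k$-tree completely.
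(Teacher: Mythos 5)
Your strategy is sound and would prove the theorem, but it takes a genuinely different route from the paper's at the crucial step. Like you, the paper dismisses injectivity and naturality as clear; however, it never formalizes the ``reverse procedure.'' Instead it proves only \emph{surjectivity}, by induction on the number of hedra: defining ``contact of order $n$'' to mean the two species restricted to label sets of size at most $n$ are naturally isomorphic, it checks the base case directly (there are $k!$ structures of each kind on one hedron), and then, given a $k$-coding tree $C$ with $n+1$ black vertices, it chooses two distinct sub-coding-trees $C_{1}$ and $C_{2}$, each obtained by deleting a suitable near-leaf black vertex; the inductive hypothesis supplies coherently-oriented $k$-trees $T_{1}, T_{2}$ with $\beta \pbrac{T_{i}} = C_{i}$, and since $\beta \pbrac{T_{1} \cap T_{2}} = C_{1} \cap C_{2}$, the union $T = T_{1} \cup T_{2}$ is a coherently-oriented $k$-tree with $\beta \pbrac{T} = C$. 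This overlapping-union trick lets the paper delegate all gluing consistency to the inductive hypothesis, so the mirror-bijection analysis you propose never has to be carried out; the cost is that the reconstruction stays implicit, and the argument quietly needs $C$ to possess two such removable black vertices. Your explicit inverse, by contrast, does more work --- you must show the coherence requirement pins down each attachment uniquely (note that a shared front may belong to \emph{several} already-reconstructed hedra, so you should observe that compatibility with one of them forces compatibility with all, since mirror bijections through a common front compose) and that the reconstructed object is a coherently-oriented $k$-tree --- but it buys you an actual algorithm for $\beta^{-1}$, makes the paper's ``reverse procedure'' remark precise, and establishes injectivity and surjectivity in one stroke rather than taking the former on faith.
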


\begin{proof}
  It is clear that $\beta$ sends each coherently-oriented $k$-tree to a unique $k$-coding tree, and that this map commutes with permutations on the label sets (and thus is categorically natural).
  To show that $\beta$ induces a species isomorphism, then, we need only show that $\beta$ is a surjection onto $\ct{k} \sbrac{n}$ for each $n$.
  Throughout, we will say `$F$ and $G$ have contact of order $n$' when the restrictions $F_{\leq n}$ and $G_{\leq n}$ of the species $F$ and $G$ to label sets of cardinality at most $n$ are naturally isomorphic.
  
  First, we note that there are exactly $k!$ coherently-oriented $k$-trees with one hedron---one for each cyclic ordering of the $k+1$ front labels.
  There are also $k!$ coding trees with one black vertex, and the encoding $\beta$ is clearly a natural bijection between these two sets.
  Thus, the species $\ktco{k}$ of coherently-oriented $k$-trees and $\ct{k}$ of $k$-coding trees have contact of order $1$. 

  Now, by way of induction, suppose $\ktco{k}$ and $\ct{k}$ have contact of order $n \geq 1$.
  Let $C$ be a $k$-coding tree with $n+1$ black vertices.
  Then let $C_{1}$ and $C_{2}$ be two distinct sub-$k$-coding trees of $C$, each obtained from $C$ by removing one black node which has only one white neighbor which is not a leaf.
  Then, by hypothesis, there exist coherently-oriented $k$-trees $T_{1}$ and $T_{2}$ with $n$ hedra such that $\beta \pbrac{T_{1}} = C_{1}$ and $\beta \pbrac{T_{2}} = C_{2}$.
  Moreover, $\beta \pbrac{T_{1} \cap T_{2}} = \beta \pbrac{T_{1}} \cap \beta \pbrac{T_{2}}$, and this $k$-coding tree has $n-1$ black vertices, so $T_{1} \cap T_{2}$ has $n-1$ hedra.
  Thus, $T = T_{1} \cup T_{2}$ is a coherently-oriented $k$-tree with $n+1$ black hedra, and $\beta \pbrac{T} = C$ as desired.
  Thus, $\beta^{-1} \pbrac{\beta \pbrac{T_{1}} \cup \beta \pbrac{T_{2}}} = T_{1} \cup T_{2} = T$, and hence $\ktco{k}$ and $\ct{k}$ have contact of order $n+1$.
\end{proof}

Thus, $\ktco{k}$ and $\ct{k}$ are isomorphic as species; however, $k$-coding trees are much simpler than coherently-oriented $k$-trees as graphs.
Moreover, $k$-coding trees are doubly-enriched bicolored trees as in \cite[\S 3.2]{bll:species}, for which the authors of that text develop a system of functional equations which fully characterizes the cycle index of such a species.
We thus will proceed in the following sections with a study of the species $\ct{k}$, then lift our results to the $k$-tree context.

\subsection{Functional decomposition of $k$-coding trees}\label{ss:codecomp}
With the encoding $\beta: \ktco{k} \to \ct{k}$, we now have direct graph-theoretic access to the attaching structure of coherently-oriented $k$-trees.
We therefore turn our attention to the $k$-coding trees themselves to produce a recursive decomposition.
As with $k$-trees, we will study rooted versions of the species $\ct{k}$ of $k$-coding trees first, then use dissymmetry to apply the results to unrooted enumeration.

Let $\ctx{k}$ denote the species of $k$-coding trees rooted at black vertices, $\cty{k}$ denote the species of $k$-coding trees rooted at white vertices, and $\ctxy{k}$ denote the species of $k$-coding trees rooted at edges (that is, at adjacent black-white pairs).
By construction, a $\ctx{k}$-structure consists of a single $X$-label and a cyclically-ordered $\pbrac{k+1}$-set of $\cty{k}$-structures.
See \cref{fig:ctxconst} for an example of this construction.

\begin{figure}[htb]
  \centering
  \def\kval{4}
  \begin{tikzpicture}
    \pgfmathparse{int(1+\kval)} 
    \let\numfronts\pgfmathresult

    \pgfmathparse{180/\numfronts}
    \let\childshift\pgfmathresult

    \node [style=xnode] (root) at (0,0) {$X$};   
    \cycccwl{(root)};

    \draw (180/\numfronts:1) node {$\specname{C}_{\numfronts}$};

    \foreach \i in {0, ..., \kval} {
      \pgfmathparse{360*\i/\numfronts}
      \let\theta\pgfmathresult     
      
      \node [style=ynode] (child\i) at (\theta:3) {$Y$};
      \path [draw] (root) -- (child\i);
      \draw (child\i) ++(\theta+90:1) node {$\cty{\kval}$};

      \draw (child\i) ++(\theta:1cm) ++(180+\theta-\childshift:2cm) arc (180+\theta-\childshift:180+\theta+\childshift:2cm);

      \path [draw] (child\i) -- ++(\theta:2) node [rotate=\theta,fill=white] {$\cdots$};
      \path [draw] (child\i) -- ++(\theta+\childshift:2);
      \path [draw] (child\i) -- ++(\theta-\childshift:2);
    }
    
  \end{tikzpicture}
  \caption[An example $X$-rooted $k$-coding tree]{An example $\ctx{\kval}$-structure, rooted at the $X$-vertex.}
  \label{fig:ctxconst}
\end{figure}

Similarly, a $\cty{k}$-structure essentially consists of a single $Y$-label and a (possibly empty) set of $\ctx{k}$-structures, but with some modification.
Every white neighbor of the black root of a $\ctx{k}$-structure is labeled in the construction above, but the white parent of a $\ctx{k}$-structure in this recursive decomposition is already labeled.
Thus, the structure around a black vertex which is a child of a white vertex consists of an $X$ label and a linearly-ordered $k$-set of $\cty{k}$-structures.
Thus, a $\cty{k}$-structure consists of a $Y$-label and a set of pairs of an $X$ label and an $\specname{L}_{k}$-structure of $\cty{k}$-structures.
We note here for conceptual consistency that in fact $\specname{L}_{k} = \deriv{\specname{C}}_{k+1}$ for $\specname{L}$ the species of linear orders and $\specname{C}$ the species of cyclic orders and that $\deriv{\specname{E}} = \specname{E}$ for $\specname{E}$ the species of sets; readers familiar with the $R, S$-enriched bicolored trees of \cite[\S 3.2]{bll:species} will recognize echoes of their decomposition in these facts.

Finally, a $\ctxy{k}$-structure is simply an $X \cdot \specname{L}_{k} \pbracs[big]{\cty{k}}$-structure as described above (corresponding to the black vertex) together with a $\cty{k}$-structure (corresponding to the white vertex).
For reasons that will become clear later, we note that we can incorporate the root white vertex into the linear order by making it last, thus representing a $\ctxy{k}$-structure instead as an $X \cdot \specname{L}_{k+1} \pbracs[big]{\cty{k}}$-structure.
See \cref{fig:ctxyconst} for an example of this construction.

\begin{figure}[htb]
  \centering
  \def\kval{4}
  \begin{tikzpicture}
    \pgfmathparse{int(1+\kval)} 
    \let\numfronts\pgfmathresult

    \pgfmathparse{180/\numfronts}
    \let\childshift\pgfmathresult

    \node [style=xnode] (root) at (0,0) {$X$};
    \draw[->,postaction={decorate},decoration={markings, mark = at position 1 with {\arrow{triangle 90}}}] (root) ++(2*\childshift:1cm) arc (2*\childshift:360:1cm); 

    \draw (root) -- ++(0:3) [ultra thick] node [fill=white] {};

    \draw (180/\numfronts:1) node {$\specname{L}_{\numfronts}$};

    \foreach \i in {0, ..., \kval} {
      \pgfmathparse{360*\i/\numfronts}
      \let\theta\pgfmathresult     
      
      \node [style=ynode] (child\i) at (\theta:3) {$Y$};
      \path [draw] (root) -- (child\i);
      \draw (child\i) ++(\theta+90:1) node {$\cty{\kval}$};

      \draw (child\i) ++(\theta:1cm) ++(180+\theta-\childshift:2cm) arc (180+\theta-\childshift:180+\theta+\childshift:2cm);

      \path [draw] (child\i) -- ++(\theta:2) node [rotate=\theta,fill=white] {$\cdots$};
      \path [draw] (child\i) -- ++(\theta+\childshift:2);
      \path [draw] (child\i) -- ++(\theta-\childshift:2);
    }
    
  \end{tikzpicture}
  \caption[An example $XY$-rooted $k$-coding tree]{An example $\ctxy{\kval}$-structure, rooted at the $X$-vertex and the thick edge adjoining it.}
  \label{fig:ctxyconst}
\end{figure}

The various species of rooted $k$-coding trees are therefore related by a system of functional equations:
\begin{observation}
  \label{obs:funcdecompct}
  For the (ordinary) species $\ctx{k}$ of $X$-rooted $k$-coding trees, $\cty{k}$ of $Y$-rooted $k$-coding trees, and $\ctxy{k}$ of edge-rooted $k$-coding trees, we have the functional relationships
  \begin{subequations}
    \label{eq:ctfunc}
    \begin{align}
      \ctx{k} &= X \cdot \specname{C}_{k+1} \pbracs[big]{\cty{k}} \label{eq:ctxfunc} \\
      \cty{k} &= Y \cdot \specname{E} \pbrac{X \cdot \specname{L}_{k} \pbracs[big]{\cty{k}}} \label{eq:ctyfunc} \\
      \ctxy{k} &= \cty{k} \cdot X \cdot \specname{L}_{k} \pbracs[big]{\cty{k}} = X \cdot \specname{L}_{k+1} \pbracs[big]{\cty{k}} \label{eq:ctxyfunc}
    \end{align}
  \end{subequations}
  as isomorphisms of ordinary species.
\end{observation}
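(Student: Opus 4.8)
The plan is to treat all three identities as structural decompositions and, for each, to exhibit an explicit natural bijection between the structures on the two sides, read off directly from the recursive anatomy of a rooted $k$-coding tree as established through the encoding $\beta$ of \cref{thm:bctreeenc}. Recall that a $\ct{k}$-structure is a bicolored tree in which each black vertex carries a $\specname{C}_{k+1}$-enrichment on its (exactly $k+1$) white neighbors, while each white vertex carries only the implicit $\specname{E}$-structure on its black neighbors. Each of the three rootings amounts to distinguishing a vertex or an edge and then describing what hangs off it. The key technical device throughout will be the identity $\specname{L}_{k} = \deriv{\specname{C}}_{k+1}$ noted just above the statement: distinguishing one element of a cyclic order on $k+1$ points---namely the white neighbor lying \emph{toward} the root---is precisely the data of a linear order on the remaining $k$.

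For \cref{eq:ctxfunc} I would argue that an $X$-rooted $k$-coding tree is a black root carrying a single $X$-label whose $k+1$ white neighbors are cyclically ordered by the root's own enrichment; since the root has no parent, this full cyclic order survives, and deleting the root splits the tree into $k+1$ subtrees, each naturally a $Y$-rooted $k$-coding tree. This gives $\ctx{k} = X \cdot \specname{C}_{k+1} \pbracs[big]{\cty{k}}$. For \cref{eq:ctyfunc}, a $Y$-rooted tree is a white root carrying a $Y$-label together with an unstructured \emph{set} of black neighbors (the $\specname{E}$-enrichment, with $\deriv{\specname{E}} = \specname{E}$ accounting for the discarded parent-slot); each such black neighbor carries an $X$-label and, once the white root is distinguished as its parent, a \emph{linear} order on its $k$ remaining white neighbors, each again a $\cty{k}$-structure. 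This yields $\cty{k} = Y \cdot \specname{E} \pbrac{X \cdot \specname{L}_{k} \pbracs[big]{\cty{k}}}$.

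For \cref{eq:ctxyfunc}, cutting an edge-rooted tree along its root edge separates a $\cty{k}$-structure on the white side from the black end, which carries its $X$-label and, with the root white vertex distinguished as parent, a linear order on its $k$ other white neighbors; this gives the first expression $\cty{k} \cdot X \cdot \specname{L}_{k} \pbracs[big]{\cty{k}}$. The second expression then follows from the elementary species isomorphism $\specname{L}_{k+1} \pbracs[big]{A} \cong A \cdot \specname{L}_{k} \pbracs[big]{A}$ (peel off the last term of a linear order), applied with $A = \cty{k}$, which reabsorbs the root white vertex as the final entry of a linear order of length $k+1$.

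The main point requiring care---and the only genuine obstacle in what is otherwise a matter of reading off decompositions---is the cyclic-to-linear passage, i.e.\ verifying that distinguishing the parent neighbor converts the $\specname{C}_{k+1}$-enrichment into exactly an $\specname{L}_{k}$-structure and that this conversion is performed consistently at every black vertex that has a parent. Alongside this I would check that each correspondence is genuinely \emph{natural}, that is, commutes with transport of labels in both the $X$- and $Y$-sorts, so that the equalities hold as isomorphisms of (two-sort) species rather than merely as bijections of structure sets; this is routine, since every step either relabels a distinguished vertex or redistributes whole subtrees together with their labels. Finally, I would confirm the nonemptiness hypothesis required by \cref{def:speccomp} for the compositions into $\specname{C}_{k+1}$ and $\specname{L}_{k}$, which holds because every $\cty{k}$-structure contains at least its white root and so is never empty.
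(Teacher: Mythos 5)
Your proposal is correct and follows essentially the same route as the paper: the paper justifies \cref{obs:funcdecompct} by exactly these decompositions---the black root with a $\specname{C}_{k+1}$-structure on its white subtrees for \cref{eq:ctxfunc}, the cyclic-to-linear conversion via $\specname{L}_{k} = \deriv{\specname{C}}_{k+1}$ (with $\deriv{\specname{E}} = \specname{E}$) at black vertices having a distinguished parent for \cref{eq:ctyfunc}, and absorption of the root white vertex as the last entry of a length-$(k+1)$ linear order for \cref{eq:ctxyfunc}. Your added verifications of naturality and of the nonemptiness hypothesis for the compositions are sound and only make explicit what the paper leaves implicit in calling this an observation.
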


However, a recursive characterization of the various ordinary species of $k$-coding trees is insufficient to characterize the species of $k$-trees itself, since $k$-coding trees represent $k$-trees with coherent orientations.

\section{Generic $k$-trees}\label{s:genkt}
To remove the additional structure of coherent orientation imposed on $k$-trees before their conversion to $k$-coding trees, we now apply the theory of $\Gamma$-species developed in \cref{s:quot}.
In \cite{gessel:spec2trees}, the orientation-reversing action of $\symgp{2}$ on $\cyc_{\sbrac{3}}$ is exploited to study $2$-trees species-theoretically.
We might hope to develop an analogous group action under which general $k$-trees are naturally identified with orbits of coherently-oriented $k$-trees under an action of $\symgp{k}$.
Unfortunately:
\begin{proposition}
  \label{prop:notransac}
  For $k \geq 3$, no transitive action of any group on the set $\cyc_{\sbrac{k+1}}$ of cyclic orders on $\sbrac{k+1}$ commutes with the action of $\symgp{k+1}$ that permutes labels.
\end{proposition}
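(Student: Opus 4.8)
The plan is to exploit the rigidity of the $\symgp{k+1}$-action on $\cyc_{\sbrac{k+1}}$: because this action is transitive with large point-stabilizers, there is very little room for any commuting action, and certainly not enough room for a transitive one. First I would record a convenient model of the $\symgp{k+1}$-set $\cyc_{\sbrac{k+1}}$. Each cyclic order $c$ on $\sbrac{k+1}$ is determined by its successor permutation $s \pbrac{c}$, a single $\pbrac{k+1}$-cycle, and $c \mapsto s \pbrac{c}$ is a bijection from $\cyc_{\sbrac{k+1}}$ onto the set of $\pbrac{k+1}$-cycles in $\symgp{k+1}$ (of which there are exactly $k!$). Under this identification the label-permuting action of $\tau \in \symgp{k+1}$ becomes conjugation, $s \pbrac{\tau \cdot c} = \tau \, s \pbrac{c} \, \tau^{-1}$, so the stabilizer of $c$ is the centralizer of the $\pbrac{k+1}$-cycle $s \pbrac{c}$, which is exactly the cyclic group $\langle s \pbrac{c} \rangle$ of order $k+1$. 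In particular the action is transitive and every point-stabilizer is cyclic of order $k+1$.

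The crux is to show that any commuting action has small orbits. Suppose a group $G$ acts on $\cyc_{\sbrac{k+1}}$ and this action commutes with that of $\symgp{k+1}$. Then each $g \in G$ acts as a $\symgp{k+1}$-equivariant bijection, so it carries stabilizers into stabilizers: if $\tau \cdot c_{0} = c_{0}$ then $\tau \cdot \pbrac{g \cdot c_{0}} = g \cdot \pbrac{\tau \cdot c_{0}} = g \cdot c_{0}$, giving $\operatorname{Stab} \pbrac{c_{0}} \subseteq \operatorname{Stab} \pbrac{g \cdot c_{0}}$ for a fixed base point $c_{0}$. Since both stabilizers are cyclic of order $k+1$, this containment is forced to be an equality, so $\langle s \pbrac{g \cdot c_{0}} \rangle = \langle s \pbrac{c_{0}} \rangle$, and hence $s \pbrac{g \cdot c_{0}}$ is a generator of the fixed cyclic group $\langle s \pbrac{c_{0}} \rangle$.

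It then remains to count the cyclic orders that can arise this way. A cyclic order $c'$ satisfies $\langle s \pbrac{c'} \rangle = \langle s \pbrac{c_{0}} \rangle$ precisely when $s \pbrac{c'}$ is one of the generators of that order-$\pbrac{k+1}$ cyclic group, and a cyclic group of order $k+1$ has exactly $\varphi \pbrac{k+1}$ generators, where $\varphi$ denotes Euler's totient. Therefore the entire $G$-orbit of $c_{0}$ lies inside a set of cardinality $\varphi \pbrac{k+1}$, so $\abs{G \cdot c_{0}} \leq \varphi \pbrac{k+1}$. A transitive $G$-action would force $\varphi \pbrac{k+1} \geq \abs{\cyc_{\sbrac{k+1}}} = k!$; but $\varphi \pbrac{k+1} \leq k < k!$ for every $k \geq 3$, a contradiction, which proves the proposition.

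I expect the main obstacle to be the middle step: verifying cleanly that commuting with the $\symgp{k+1}$-action forces each $g$ to be equivariant and hence to preserve stabilizers, and then arguing that the resulting inclusion of equal-order cyclic groups is an equality rather than a mere containment. The cardinality bookkeeping afterward is routine. It is worth noting that the argument is sharp at $k = 2$, where $\varphi \pbrac{3} = 2 = 2!$, which is exactly why the orientation-reversing $\symgp{2}$-action used for $2$-trees in \cite{gessel:spec2trees} exists; the obstruction appears only once $k \geq 3$.
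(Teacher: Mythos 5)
Your proof is correct and follows essentially the same route as the paper's: both identify cyclic orders with their successor $\pbrac{k+1}$-cycles so that the label action becomes conjugation, and both use the commuting hypothesis to force the $G$-orbit of a cyclic order $c$ into the centralizer of $c$ in $\symgp{k+1}$ --- the paper via the one-line computation $g \cdot c = g \cdot ccc^{-1} = c \pbrac{g \cdot c} c^{-1}$, you via preservation of stabilizers under equivariant maps. The only divergence is in the finish: the paper concludes qualitatively (not all $\pbrac{k+1}$-cycles commute once $k \geq 3$), while your generator count gives the quantitative orbit bound $\abs{G \cdot c_{0}} \leq \varphi \pbrac{k+1} \leq k < k!$, which has the added virtue of showing exactly why $k = 2$ escapes the obstruction.
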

\begin{proof}
  We represent the elements of $\cyc_{\sbrac{k+1}}$ as cyclic permutations on the alphabet $\sbrac{k+1}$; then the action of $\symgp{k+1}$ that permutes labels is exactly the conjugation action on these permutations.
  Consider an action of a group $G$ on $\cyc_{\sbrac{k+1}}$ that commutes with this conjugation action.
  Then, for any $g \in G$ and any $c \in \cyc_{\sbrac{k+1}}$, we have that
  \begin{equation}
    \label{eq:transaction}
    g \cdot c = g \cdot c c c^{-1} = c \pbrac{g \cdot c} c^{-1}
  \end{equation}
  and so $c$ and $g \cdot c$ commute.
  Thus, $c$ commutes with every element of its orbit under the action of $G$.
  But, for $k \geq 3$, not all elements of $\cyc_{\sbrac{k+1}}$ commute, so the action is not transitive.
\end{proof}

We thus cannot hope to attack the coherent orientations of $k$-trees by acting directly on the cyclic orderings of fronts.
Accordingly, we cannot simply apply the results of \cref{ss:codecomp} to compute a $\Gamma$-species $\ct{k}$ with respect to some hypothetical action of a group $\Gamma$ whose orbits correspond to generic $k$-trees.
Instead, we will use the additional structure on \emph{rooted} coherently-oriented $k$-trees; with rooting, the cyclic orders around black vertices are converted into linear orders, for which there is a natural action of $\symgp{k+1}$.

\subsection{Group actions on $k$-coding trees}\label{ss:actct}

We have noted previously that every labeled $k$-tree admits exactly $k!$ coherent orientations.
Thus, there are $k!$ distinct $k$-coding trees associated to each labeled $k$-tree, which differ only in the $\specname{C}_{k+1}$-structures on their black vertices.
Consider a rooted $k$-coding tree $T$ and a black vertex $v$ which is not the root vertex.
Then one white neighbor of $v$ is the `parent' of $v$ (in the sense that it lies on the path from $v$ to the root).
We thus can convert the cyclic order on the $k+1$ white neighbors of $v$ to a linear order by choosing the parent white neighbor to be last.
There is a natural, transitive, label-independent action of $\symgp{k+1}$ on the set of such linear orders which induces an action on the cyclic orders from which the linear orders are derived.
However, only elements of $\symgp{k+1}$ which fix $k+1$ will respect the structure around the black vertex we have chosen, since its parent white vertex must remain last.

In addition, if we simply apply the action of some $\sigma \in \symgp{k+1}$ to the order on white neighbors of $v$, we change the coherently-oriented $k$-tree $\beta^{-1} \pbrac{T}$ to which $T$ is associated in such a way that it no longer corresponds to the same unoriented $k$-tree.
Let $t$ denote the unoriented $k$-tree associated to $\beta^{-1} \pbrac{T}$; then there exists a coherent orientation of $t$ which agrees with orientation around $v$ induced by $\sigma$.
The $k$-coding tree $T'$ corresponding to this new coherent orientation has the same underlying bicolored tree as $T$ but possibly different orders around its black vertices.
If we think of the $k$-coding tree $T'$ as the image of $T$ under a global action of $\sigma$, orbits under all of $\symgp{}$ will be precisely the classes of $k$-coding trees corresponding to all coherent orientations of specified $k$-trees, allowing us to study unoriented $k$-trees as quotients.
The orientation of $T'$ will be that obtained by applying $\sigma$ at $v$ and then recursively adjusting the other cyclic orders so that fronts which were mirror are made mirror again.
This will ensure that the combinatorial structure of the underlying $k$-tree $t$ is preserved.

Therefore, when we apply some permutation $\sigma \in \symgp{k+1}$ to the white neighbors of a black vertex $v$, we must also permute the cyclic orders of the descendant black vertices of $v$.
In particular, the permutation $\sigma'$ which must be applied to some immediate black descendant $v'$ of $v$ is precisely the permutation on the linear order of white neighbors of $v'$ induced by passing over the mirror bijection from $v'$ to $v$, applying $\sigma$, and then passing back.
We can express this procedure in formulaic terms:
\begin{theorem}
  \label{thm:rhodef}
  If a permutation $\sigma \in \symgp{k+1}$ is applied to linearized orientation of a black vertex $v$ in rooted $k$-coding tree, the permutation which must be applied to the linearized orientation a child black vertex $v'$ which was attached to the $i$th white child of $v$ (with respect to the linear ordering induced by the orientation) to preserve the mirror relation is $\rho_{i} \pbrac{\sigma}$, where $\rho_{i}$ is the map given by
  \begin{equation}
    \label{eq:rhodef}
    \rho_{i} \pbrac{\sigma}: a \mapsto \sigma \pbrac{i + a} - \sigma \pbrac{i}
  \end{equation}
  in which all sums and differences are reduced to their representatives modulo $k+1$ in $\cbrac{1, 2, \dots, k+1}$.
\end{theorem}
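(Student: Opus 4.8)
The plan is to reduce \cref{thm:rhodef} to a purely local computation about the two adjacent hedra carried by $v$ and its child $v'$, since both coherence (\cref{def:coktree}) and the parent-last linearization are imposed hedron-by-hedron and front-by-front. First I would fix the hedra $h$ and $h'$ represented by $v$ and $v'$; they share the front $f$ which is the $i$th white child of $v$. Writing each hedron as $f$ together with one additional apex vertex and identifying each front with the unique vertex it omits, the mirror bijection $\mu$ of \cref{def:mirrorfronts} becomes completely explicit: it carries the apex of $h$ to the apex of $h'$ and fixes every vertex of $f$, so on the level of omitted-vertex labels it matches ``omit apex of $h$'' with ``omit apex of $h'$'' and fixes each ``omit $w$'' for $w \in f$. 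In particular the front $f$ itself (``omit apex'') is sent to $f$, recovering the uniqueness asserted in \cref{obs:mirrorfronts}.

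Next I would translate the two linearizations into this picture. At $v$ the parent front of $h$ occupies position $k+1$ while the shared front $f$, being the $i$th child, occupies position $i$; at $v'$ the shared front $f$ is itself the parent and so occupies position $k+1$. Coherence says the cyclic order on the fronts of $h'$ is exactly the $\mu$-image of the cyclic order on the fronts of $h$. Reading each cyclic order into its linear order so that each ends at its own parent, and comparing the two cut points, then yields the position dictionary $L_{h'}[b] = \mu\pbrac{L_h[i+b]}$, with all indices reduced modulo $k+1$ into $\cbrac{1, \dots, k+1}$. The shift by $i$ is the heart of the matter: it records that the front $f$ which terminates $v'$'s linear order occupies cyclic position $i$ in the order of $h$.

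Finally I would apply $\sigma$ to $L_h$ --- under the natural left action, whereby $\sigma$ moves the front in position $a$ to position $\sigma(a)$ --- and propagate it through the \emph{fixed} bijection $\mu$ to the new coherent orientation of $h'$. The only subtlety is that the shared front $f$, and hence the cut point for $v'$, now lands in position $\sigma(i)$, so re-linearizing $h'$ with $f$ last reintroduces exactly the correction term $-\sigma(i)$; composing the shift-by-$i$ dictionary with $\sigma$ and this recut produces the map $a \mapsto \sigma(i+a) - \sigma(i)$, which is $\rho_i\pbrac{\sigma}$. I expect this recutting bookkeeping --- keeping straight which cut point is in force and in which direction each shift runs --- to be the main obstacle, and I would guard against sign and convention errors by checking the forced value $\rho_i\pbrac{\sigma}\pbrac{k+1} = \sigma(i) - \sigma(i) \equiv k+1$, which confirms that $\rho_i\pbrac{\sigma}$ fixes $k+1$ and hence preserves the parent-last structure at $v'$, exactly as the surrounding discussion requires.
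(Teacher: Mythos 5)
Your proposal is correct and follows essentially the same route as the paper's proof: both track front positions through the mirror bijection and the parent-last linearizations, with the shift by $i$ arising from the cut at the shared front and the $-\sigma(i)$ correction arising from recutting after $\sigma$ acts. Your version merely makes explicit (via the omitted-vertex description of the mirror bijection and the dictionary $L_{h'}[b] = \mu(L_{h}[i+b])$) the positional relation that the paper asserts directly.
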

\begin{proof}
  Let $v'$ denote a black vertex which is attached to $v$ by the white vertex $1$, which we suppose to be in position $i$ in the linear order induced by the original orientation of $v$.
  Let $2$ denote the white child of $v'$ which is $a$th in the linear order induced by the original orientation around $v'$.
  It is mirror to the white child $3$ of $v$ which is $\pbrac{i+a}$th in the linear order induced by the original orientation around $v$.
  After the action of $\sigma$ is applied, vertex $3$ is $\sigma \pbrac{i+a}$th in the new linear order around $v$.
  We require that $2$ is still mirror to $3$, so we must move it to position $\sigma \pbrac{i + a} - \sigma \pbrac{i}$ when we create a new linear order around $v'$.
  This completes the proof.
\end{proof}
This procedure is depicted in \cref{fig:rhoapp}.

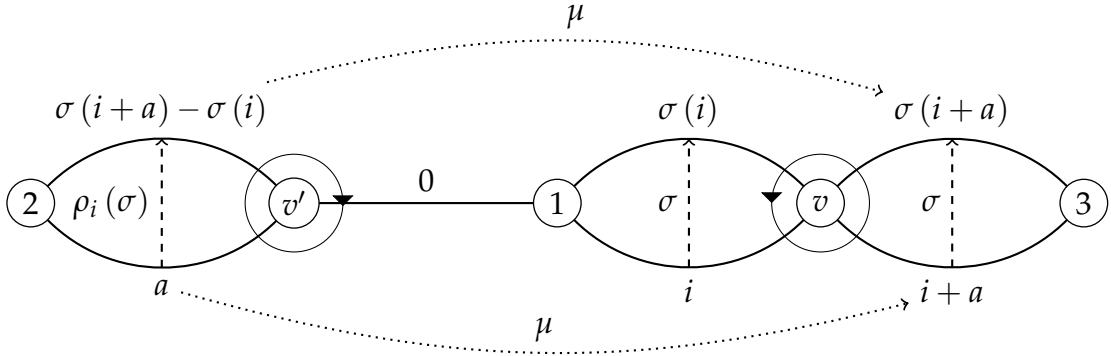
\begin{figure}[htbp]
  \centering
  \begin{tikzpicture}
    \SetGraphUnit{3.5}

    \Vertex[style=xnode,Math=true]{v}
    \cycccwl{(v)};

    \EA[style=ynode](v){3}
    \path (v) edge [bend right=45, thick] node [below](a3d){$i+a$} (3);
    \path (v) edge [bend left=45, thick] node [above](a3u){$\sigma \pbrac{i+a}$} (3);
    \path (a3d) edge [->, dashed, thick] node [auto]{$\sigma$} (a3u);

    \WE[style=ynode](v){1}
    \path (v) edge [bend left=45, thick] node [below](a1d){$i$} (1);
    \path (v) edge [bend right=45, thick] node [above](a1u){$\sigma \pbrac{i}$} (1);
    \path (a1d) edge [->, dashed, thick] node [auto]{$\sigma$} (a1u);
    
    \WE[style=xnode, Math=true](1){v'}
    \cyccwr{(v')};
    \path (v') edge [thick] node [auto](b1){$0$} (1);

    \WE[style=ynode](v'){2}
    \path (v') edge [bend left=45, thick] node [below](b2d){$a$} (2);
    \path (v') edge [bend right=45, thick] node [above](b2u){$\sigma \pbrac{i+a} - \sigma \pbrac{i}$} (2);
    \path (b2d) edge [->, dashed, thick] node [auto]{$\rho_{i} \pbrac{\sigma}$} (b2u);

    \path (b2d) edge [->, dotted, thick, bend right=15] node [auto]{$\mu$} (a3d);
    \path (b2u) edge [->, dotted, thick, bend left=15] node [auto]{$\mu$} (a3u);
  \end{tikzpicture}
  \caption[The permutation-modifying map $\rho$]{Application of a permutation $\sigma$ to the orientation of a non-root black vertex $v$.
    The vertices $2$ and $3$ are mirror in the original orientation (lower set of edges), as shown by the arrows $\mu$, so we must preserve this mirror relation when we apply $\sigma$.
    The permutation $\sigma$ moves $3$ from the $\pbrac{i+a}$th place to the $\sigma \pbrac{i+a}$th, so $\rho_{i} \pbrac{\sigma}$ must carry $2$ from the $a$th place to the $\pbrac{\sigma \pbrac{i+a} - \sigma \pbrac{i}}$th.}
  \label{fig:rhoapp}
\end{figure}

As an aside, we note that, although the construction $\rho$ depends on $k$, the value of $k$ will be fixed in any given context, so we suppress it in the notation.

Any $\sigma$ which is to be applied to a non-root black vertex $v$ must of course fix $k+1$.
We let $\Delta: \symgp{k} \to \symgp{k+1}$ denote the obvious embedding; then the image of $\Delta$ is exactly the set of $\sigma \in \symgp{k+1}$ which fix $k+1$.
We then have an action of $\symgp{k}$ on non-root black vertices induced by $\Delta$.
(Equivalently, we can think of $\symgp{k}$ as the subgroup of $\symgp{k+1}$ of permutations fixing $k+1$, but the explicit notation $\Delta$ will be of use in later formulas.)

In light of \cref{obs:funcdecompct}, we now wish to adapt these ideas into explicit $\symgp{k}$- and $\symgp{k+1}$-actions on $\ctx{k}$, $\cty{k}$, and $\ctxy{k}$ whose orbits correspond to the various coherent orientations of single underlying rooted $k$-trees.
In the case of a $Y$-rooted $k$-coding tree $T$, if we declare that $\sigma \in \symgp{k}$ acts on $T$ by acting directly (as $\Delta \pbrac{\sigma}$) on each of the black vertices immediately adjacent to the root and then applying $\rho$-derived permutations recursively to their descendants, orbits behave as expected.
The same $\symgp{k}$-action serves equally well for edge-rooted $k$-coding trees, where (for purposes of applying the action of some $\sigma$) we can simply ignore the black vertex in the root.

However, if we begin with an $X$-rooted $k$-coding tree, the cyclic ordering of the white neighbors of the root black vertex has no canonical choice of linearization.
If we make an arbitrary choice of one of the $k+1$ available linearizations, and thus convert to an edge-rooted $k$-coding tree, the full $\symgp{k+1}$-action defined previously can be applied directly to the root vertex.
The orbit under this action of some edge-rooted $k$-coding tree $T$ with a choice of linearization at the root then includes all possible linearizations of the root orders of all possible $X$-rooted $k$-coding trees corresponding to the different coherent orientations of a single $k$-coding tree.

\subsection{$k$-trees as quotients}\label{ss:ktquot}
Since these actions are label-independent, we may now treat $\cty{k}$ and $\ctxy{k}$ as $\symgp{k}$-species and $\ctxy{k}$ as an $\symgp{k+1}$-species.
The $\symgp{k}$- and $\symgp{k+1}$-actions on $\ctxy{k}$ are compatible, but we will make explicit reference to $\ctxy{k}$ as an $\symgp{k}$- or $\symgp{k+1}$-species whenever it is important and not completely clear from context which we mean.
As a result of the above results, we can then relate the rooted $\Gamma$-species forms of $\ct{k}$ to the various ordinary species forms of generic rooted $k$-trees in \cref{thm:dissymk}:
\begin{theorem}
  \label{thm:arootquot}
  For the various rooted forms of the ordinary species $\kt{k}$ as in \cref{thm:dissymk} and the various rooted $\Gamma$-species forms of $\ct{k}$ as in \cref{obs:funcdecompct} as $\symgp{k}$- and $\symgp{k+1}$-species, we have
  \begin{subequations}
    \label{eq:arootquot}
    \begin{align}
      \kty{k} &= \faktor{\cty{k}}{\symgp{k}} \label{eq:ayquot} \\
      \ktxy{k} &= \faktor{\ctxy{k}}{\symgp{k}} \label{eq:axyquot} \\
      \ktx{k} &= \faktor{\ctxy{k}}{\symgp{k+1}} \label{eq:axquot}
    \end{align}
  \end{subequations}
  as isomorphisms of ordinary species, where $\ctxy{k}$ is an $\symgp{k}$-species in \cref{eq:axyquot} and an $\symgp{k+1}$-species in \cref{eq:axquot}.
\end{theorem}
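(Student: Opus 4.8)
The plan is to prove all three isomorphisms of \cref{eq:arootquot} by a single uniform strategy: exhibit each quotient map as an \emph{orientation-forgetting} natural transformation and show that its fibers coincide exactly with the orbits of the prescribed group action, so that the induced map on orbits is the claimed species isomorphism. First I would use \cref{thm:bctreeenc} to transport everything back from $k$-coding trees to coherently-oriented $k$-trees, identifying $\cty{k}$, $\ctxy{k}$, and $\ctx{k}$ with the species of front-, edge-, and hedron-rooted coherently-oriented $k$-trees, so that the $\symgp{k}$- and $\symgp{k+1}$-actions of \cref{ss:actct} act literally on orientations. Then, for \cref{eq:ayquot} and \cref{eq:axyquot}, I would define the forgetful map $\omega$ sending a rooted coherently-oriented $k$-tree to its underlying rooted generic $k$-tree; for \cref{eq:axquot}, $\omega$ additionally forgets the chosen linearization (the distinguished root front) introduced when passing from the $X$-rooted to the edge-rooted picture. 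Each such $\omega$ is label-independent, hence a natural transformation, and is surjective since every rooted $k$-tree admits at least one coherent orientation. Thus $\omega$ is a species covering in the sense of \cref{def:specmaptypes}; being constant on orbits, it factors through the quotient species, and the whole theorem reduces to the claim that for each fixed rooted generic $k$-tree the fiber of $\omega$ is a single orbit.

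One inclusion is immediate: orbits are contained in fibers, because the action was engineered in \cref{ss:actct}, via the propagation rule $\rho$ of \cref{thm:rhodef}, precisely to re-cohere the orientation while fixing the underlying $k$-tree. The substantive content is the reverse inclusion, namely \emph{transitivity} of the action on each fiber, i.e.\ on the set of coherent orientations of a fixed rooted $k$-tree. Here I would exploit that a coherent orientation is completely determined by the cyclic order at any single hedron, the remaining orientations being forced by compatibility and computable by iterating $\rho$. In the edge-rooted case \cref{eq:axyquot}, the root hedron carries a distinguished front; linearizing with that front last converts its cyclic order into a linear order on the other $k$ fronts, on which $\symgp{k}$ acts as the regular, hence transitive, representation, so all $k!$ coherent orientations lie in one orbit. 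For \cref{eq:axquot} the root hedron has no distinguished front, so all $k+1$ linearizations are available and $\symgp{k+1}$ acts regularly on the $(k+1)! = (k+1)\cdot k!$ pairs (linearization, coherent orientation); quotienting forgets both pieces of data and returns the bare hedron-rooted $k$-tree.

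The main obstacle I anticipate is verifying transitivity cleanly in the front-rooted case \cref{eq:ayquot}, where the root front $f$ may be shared by several hedra. There a single $\sigma \in \symgp{k}$ must be applied simultaneously to all hedra containing $f$, and I must check that this preserves coherence, i.e.\ that compatibility across $f$ is maintained. This should follow because the non-$f$ fronts of each such hedron are canonically indexed by the $k$ vertices of $f$ (each omits exactly one vertex of $f$), the mirror bijection across $f$ is precisely ``omits the same vertex'', and compatibility means that the induced linear orders agree under this common indexing; applying one $\sigma$ to the shared index set therefore preserves that agreement and realizes all $k!$ orientations in one orbit. Once this well-definedness and coherence-preservation is confirmed, together with the fact that the $\rho$-propagation is unambiguous on a tree (each vertex has a unique path to the root), transitivity on fibers follows, $\omega$ induces a bijection on orbits in every case, and naturality is inherited from $\omega$ — so all three isomorphisms drop out simultaneously.
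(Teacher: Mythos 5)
Your proposal is correct and is essentially the paper's own approach: the paper presents \cref{thm:arootquot} as an immediate consequence of the construction of the $\symgp{k}$- and $\symgp{k+1}$-actions in \cref{ss:actct}, whose orbits are asserted (informally) to be exactly the sets of $k$-coding trees arising from all coherent orientations, together with all root-linearizations in the $X$-rooted case, of a fixed underlying rooted $k$-tree. Your fibers-equal-orbits argument---surjectivity of the orientation-forgetting map, orbits contained in fibers because $\rho$-propagation preserves the underlying $k$-tree, and transitivity via the regular action on linearized root orientations---is a rigorous write-up of exactly what the paper leaves implicit, including the one delicate point (several hedra sharing the root front in the $Y$-rooted case, handled by the omitted-vertex indexing of the mirror bijection) that the paper glosses over.
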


As a result, we have explicit characterizations of all the rooted components of the original dissymmetry theorem, \cref{thm:dissymk}.
To compute the cycle indices of these components (and thus the cycle index of $\kt{k}$ itself), we need only compute the cycle indices of the various rooted $\ct{k}$ species, which we will do using a combination of the functional equations in \cref{eq:ctfunc} and explicit consideration of automorphisms.

\section{Automorphisms and cycle indices}\label{s:ktcycind}
\subsection{$k$-coding trees: $\cty{k}$ and $\ctxy{k}$}\label{ss:ctcycind}
\Cref{cor:dissymkreform} of the dissymmetry theorem for $k$-trees has a direct analogue in terms of cycle indices:
\begin{theorem}
  \label{thm:dissymkci}
  For the various forms of the species $\kt{k}$ as in \cref{s:dissymk}, we have
  \begin{equation}
    \label{eq:dissymkci}
    \ci{\kt{k}} = \ci{\ktx{k}} + \ci{\kty{k}} - \ci{\ctxy{k}}.
  \end{equation}
\end{theorem}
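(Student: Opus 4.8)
The plan is to read \cref{eq:dissymkci} as nothing more than the image of the species isomorphism of \cref{cor:dissymkreform}, $\kt{k} = \ktx{k} + \kty{k} - \ktxy{k}$, under the cycle-index operation, and then to rewrite the final term by means of the quotient identifications of \cref{thm:arootquot}. Because the proof of \cref{thm:dissymk} already exhibits the centering embedding of $\ktxy{k}$ into $\ktx{k} + \kty{k}$, the difference appearing in \cref{eq:dissymkreform} is well-defined in the sense of \cref{def:specdif}, so there is no ambiguity to resolve before taking cycle indices.

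First I would dispose of the additive part: by \cref{thm:specsumci} the cycle index is additive over species sums, so $\ci{\ktx{k} + \kty{k}} = \ci{\ktx{k}} + \ci{\kty{k}}$.

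The one point that genuinely requires argument is that $\ci{\cdot}$ is also compatible with the species difference of \cref{def:specdif}: if $\phi \colon G \hookrightarrow F$ is a species embedding, then $\ci{F \specsub{\phi} G} = \ci{F} - \ci{G}$. The key observation is that, since $\phi$ is a natural transformation, the image $\phi\pbrac{G\sbrac{A}}$ is invariant under every transport $F\sbrac{\sigma}$; hence for each $\sigma \in \symgp{n}$ the fixed points of $F\sbrac{\sigma}$ split into those lying in $\phi\pbrac{G\sbrac{A}}$ and those lying in its complement $\pbrac{F \specsub{\phi} G}\sbrac{A}$. As $\phi$ is injective and natural, it carries the $\sigma$-fixed $G$-structures bijectively onto the $\sigma$-fixed points inside $\phi\pbrac{G\sbrac{A}}$, whence $\fix\pbrac{\pbrac{F \specsub{\phi} G}\sbrac{\sigma}} = \fix\pbrac{F\sbrac{\sigma}} - \fix\pbrac{G\sbrac{\sigma}}$ for every $\sigma$. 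Weighting by $p_{\sigma}/n!$ and summing as in \cref{eq:cycinddef} then gives $\ci{F \specsub{\phi} G} = \ci{F} - \ci{G}$; this is precisely the assertion that $\ci{\cdot}$ extends to a homomorphism on virtual species, as anticipated in \cref{s:specalg}.

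Combining these two facts and applying them to \cref{eq:dissymkreform} yields $\ci{\kt{k}} = \ci{\ktx{k}} + \ci{\kty{k}} - \ci{\ktxy{k}}$. To reach the form displayed in \cref{eq:dissymkci}, I would finally invoke \cref{eq:axyquot}, which identifies $\ktxy{k}$ with the quotient $\faktor{\ctxy{k}}{\symgp{k}}$; by \cref{thm:qsci} the last term is therefore the cycle index of that quotient, expressed through the $k$-coding-tree $\symgp{k}$-species $\ctxy{k}$ as $\qgci{\symgp{k}}{\ctxy{k}}$. The main obstacle is conceptual rather than computational: the entire statement collapses to the compatibility of $\ci{\cdot}$ with the species difference, and once that is secured through the naturality of the centering embedding of \cref{thm:dissymk}, the corollary follows at once.
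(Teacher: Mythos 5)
Your proposal is correct and takes essentially the same route as the paper, which offers no separate argument but presents \cref{eq:dissymkci} as the immediate image of the species identity of \cref{cor:dissymkreform} under the cycle-index map; your explicit verification that $\ci{F \specsub{\phi} G} = \ci{F} - \ci{G}$ for a species embedding $\phi$ simply fills in a step the paper leaves implicit in its remarks on virtual species and \cref{def:specdif}. Your closing identification of the subtracted term with $\qgci{\symgp{k}}{\ctxy{k}}$ via \cref{thm:arootquot} and \cref{thm:qsci} also matches how the paper itself interprets the notation $\ci{\ctxy{k}}$ in \cref{eq:dissymkci} and \cref{thm:aquotci}.
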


Thus, we need to calculate the cycle indices of the three rooted forms of $\kt{k}$.
From \cref{thm:arootquot} and by \cref{thm:qsci} we obtain:
\begin{theorem}
  \label{thm:aquotci}
  For the various forms of the species $\kt{k}$ as in \cref{s:dissymk} and the various $\symgp{k}$-species and $\symgp{k+1}$-species forms of $\ct{k}$ as in \cref{ss:actct}, we have
  \begin{subequations}
    \label{eq:aquotci}
    \begin{align}
      \ci{\kty{k}} &= \qgci{\symgp{k}}{\cty{k}} = \frac{1}{k!} \sum_{\sigma \in \symgp{k}} \gcielt{\symgp{k}}{\cty{k}}{\sigma} \label{eq:ayquotci} \\
      \ci{\ctxy{k}} &= \qgci{\symgp{k}}{\ctxy{k}} = \frac{1}{k!} \sum_{\sigma \in \symgp{k}} \gcielt{\symgp{k}}{\ctxy{k}}{\sigma} \label{eq:axyquotci} \\
      \ci{\ktx{k}} &= \qgci{\symgp{k+1}}{\ctxy{k}} = \frac{1}{\pbrac{k+1}!} \sum_{\sigma \in \symgp{k+1}} \gcielt{\symgp{k+1}}{\ctxy{k}}{\sigma} \label{eq:axquotci}
    \end{align}
  \end{subequations}
\end{theorem}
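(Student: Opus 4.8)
The plan is to obtain all three identities as immediate consequences of the quotient cycle-index formula \cref{thm:qsci}, once the species isomorphisms of \cref{thm:arootquot} are in hand. First I would invoke the fact that cycle indices are invariant under species isomorphism: since $\fix \pbrac{F \sbrac{\sigma}}$ depends only on the isomorphism type of $F$, naturally isomorphic species share a cycle index. Hence the isomorphisms \cref{eq:ayquot}, \cref{eq:axyquot}, and \cref{eq:axquot} yield $\ci{\kty{k}} = \ci{\faktor{\cty{k}}{\symgp{k}}}$, $\ci{\ktxy{k}} = \ci{\faktor{\ctxy{k}}{\symgp{k}}}$, and $\ci{\ktx{k}} = \ci{\faktor{\ctxy{k}}{\symgp{k+1}}}$ respectively, reducing every line to the computation of a cycle index of a quotient species.

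Next I would apply \cref{thm:qsci} to each quotient in turn. For the first two lines the acting group is $\symgp{k}$, so with $\abs{\symgp{k}} = k!$ the formula gives $\ci{\faktor{\cty{k}}{\symgp{k}}} = \qgci{\symgp{k}}{\cty{k}} = \frac{1}{k!} \sum_{\sigma \in \symgp{k}} \gcielt{\symgp{k}}{\cty{k}}{\sigma}$ and, identically, $\ci{\faktor{\ctxy{k}}{\symgp{k}}} = \qgci{\symgp{k}}{\ctxy{k}} = \frac{1}{k!} \sum_{\sigma \in \symgp{k}} \gcielt{\symgp{k}}{\ctxy{k}}{\sigma}$. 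For the third line the acting group is $\symgp{k+1}$, so with $\abs{\symgp{k+1}} = \pbrac{k+1}!$ we obtain $\ci{\faktor{\ctxy{k}}{\symgp{k+1}}} = \qgci{\symgp{k+1}}{\ctxy{k}} = \frac{1}{\pbrac{k+1}!} \sum_{\sigma \in \symgp{k+1}} \gcielt{\symgp{k+1}}{\ctxy{k}}{\sigma}$. Chaining these equalities with the isomorphism-invariance of the previous paragraph produces \cref{eq:aquotci} exactly.

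The one point that demands care --- though it is bookkeeping rather than a genuine obstacle --- is that the single species $\ctxy{k}$ carries two distinct group actions, appearing as an $\symgp{k}$-species in \cref{eq:axyquot} and as an $\symgp{k+1}$-species in \cref{eq:axquot}. The $\Gamma$-cycle indices $\gci{\symgp{k}}{\ctxy{k}}$ and $\gci{\symgp{k+1}}{\ctxy{k}}$ are therefore taken with respect to different actions and are genuinely different objects, so I would be explicit in each line about which action is meant, ensuring that the fixed-point counts $\fix \pbrac{\sigma \cdot \ctxy{k} \sbrac{\tau}}$ entering \cref{eq:gcycinddef} range over the correct group. Beyond this, no real computation remains: all the substantive content --- the identification of rooted $k$-trees with quotients of rooted $k$-coding trees --- was already discharged in \cref{thm:arootquot}, and the present theorem is merely its translation into the language of cycle indices through the generalized Burnside argument underlying \cref{thm:qsci}.
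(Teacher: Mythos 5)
Your proposal is correct and matches the paper's own derivation exactly: the paper presents \cref{thm:aquotci} as an immediate consequence of \cref{thm:arootquot} combined with \cref{thm:qsci}, which is precisely the chain of reasoning (isomorphism-invariance of cycle indices, then the generalized Burnside formula with $\abs{\symgp{k}} = k!$ and $\abs{\symgp{k+1}} = \pbrac{k+1}!$) that you spell out. Your care in distinguishing the $\symgp{k}$- and $\symgp{k+1}$-actions on $\ctxy{k}$ mirrors the paper's own caveat in \cref{ss:ktquot}, so nothing is missing.
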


We thus need only calculate the various $\Gamma$-cycle indices for the $\symgp{k}$-species and $\symgp{k+1}$-species forms of $\cty{k}$ and $\ctxy{k}$ to complete our enumeration of general $k$-trees.

In \cref{obs:funcdecompct}, the functional equations for the ordinary species $\cty{k}$ and $\ctxy{k}$ both include terms of the form $\specname{L}_{k} \circ \cty{k}$.
The plethysm of ordinary species does have a generalization to $\Gamma$-species, as given in \cref{def:gspeccomp}, but it does not correctly describe the manner in which $\symgp{k}$ acts on linear orders of $\cty{k}$-structures in these recursive decompositions.
Recall from \cref{s:quot} that, for two $\Gamma$-species $F$ and $G$, an element $\gamma \in \Gamma$ acts on an $\pbrac{F \circ G}$-structure (colloquially, `an $F$-structure of $G$-structures') by acting on the $F$-structure and on each of the $G$-structures independently.
In our action of $\symgp{k}$, however, the actions of $\sigma$ on the descendant $\cty{k}$-structures are \emph{not} independent---they depend on the position of the structure in the linear ordering around the parent black vertex.
In particular, if $\sigma$ acts on some non-root black vertex, then $\rho_{i} \pbrac{\sigma}$ acts on the white vertex in the $i$th place, where in general $\rho_{i} \pbrac{\sigma} \neq \sigma$.

Thus, we consider automorphisms of these $\symgp{k}$-structures directly.
First, we consider the component species $X \cdot \specname{L}_{k} \pbracs[big]{\cty{k}}$.
\begin{lemma}
  \label{lem:ctyinvar}
  Let $B$ be a structure of the species $X \cdot \specname{L}_{k} \pbracs[big]{\cty{k}}$.
  Let $W_{i}$ be the $\cty{k}$-structure in the $i$th position in the linear order.
  Then some $\sigma \in \symgp{k}$ acts as an automorphism of $B$ if and only if, for each $i \in \sbrac{k+1}$, we have $\Delta^{-1} \pbrac{\rho_{i} \pbrac{\Delta \sigma}} W_{i} \cong W_{\sigma \pbrac{i}}$.
\end{lemma}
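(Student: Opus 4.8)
The plan is to unwind the definition of the $\symgp{k}$-action on $\pbrac{X \cdot \specname{L}_{k} \pbracs[big]{\cty{k}}}$-structures given in \cref{ss:actct} and then read off, position by position, exactly when a given $\sigma$ fixes $B$. Recall that $\sigma \in \symgp{k}$ acts on $B$ by first applying $\Delta\sigma \in \symgp{k+1}$ to the linear order of white children around the black vertex and then recursively retwisting each descendant $\cty{k}$-structure by the associated map of \cref{thm:rhodef}: the child in position $i$ is carried to position $\Delta\sigma\pbrac{i} = \sigma\pbrac{i}$, and the $\cty{k}$-structure $W_i$ hanging there is replaced by $\rho_{i}\pbrac{\Delta\sigma}$ applied to $W_i$. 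Before anything else I would dispatch the small well-definedness point that $\rho_{i}\pbrac{\Delta\sigma}$ fixes $k+1$, since $\rho_{i}\pbrac{\tau}\pbrac{k+1} = \tau\pbrac{i+k+1} - \tau\pbrac{i} = \tau\pbrac{i} - \tau\pbrac{i} \equiv k+1$ modulo $k+1$; this guarantees that $\Delta^{-1}\pbrac{\rho_{i}\pbrac{\Delta\sigma}}$ is a genuine element of $\symgp{k}$ acting on the $\symgp{k}$-species $\cty{k}$, so that the right-hand side of the claimed equivalence even makes sense.

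With the action made explicit, I would write $\sigma \cdot B$ out: its $X$-label agrees with that of $B$, and the $\cty{k}$-structure occupying position $\sigma\pbrac{i}$ is precisely $\Delta^{-1}\pbrac{\rho_{i}\pbrac{\Delta\sigma}}\,W_i$. By definition, $\sigma$ acts as an automorphism of $B$ exactly when $\sigma \cdot B \cong B$ as $\pbrac{X \cdot \specname{L}_{k}\pbracs[big]{\cty{k}}}$-structures. Since an $\specname{L}_{k}$-structure is a rigid linear order (its positions admit no nontrivial automorphism) and the single $X$-label is necessarily preserved, such an isomorphism exists if and only if, slot by slot, the $\cty{k}$-structure occupying each position of $\sigma \cdot B$ is isomorphic to the one occupying that position of $B$. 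Comparing position $\sigma\pbrac{i}$ then gives $\Delta^{-1}\pbrac{\rho_{i}\pbrac{\Delta\sigma}}\,W_i \cong W_{\sigma\pbrac{i}}$, and letting $i$ range over the positions of the linear order yields exactly the stated family of conditions. (The parent direction is fixed by $\Delta\sigma$ and so contributes only the trivial condition; the identical analysis applies verbatim to the $\specname{L}_{k+1}$ decomposition of $\ctxy{k}$, which is why it is convenient to phrase the index set as $\sbrac{k+1}$.)

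For the converse I would simply reassemble the isomorphism from the per-position data: given isomorphisms $\Delta^{-1}\pbrac{\rho_{i}\pbrac{\Delta\sigma}}\,W_i \cong W_{\sigma\pbrac{i}}$ for every $i$, their combination with the identity on the $X$-label and the position-permutation $\sigma$ is an isomorphism $\sigma \cdot B \to B$, so $\sigma$ is an automorphism of $B$.

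I expect the main obstacle to be bookkeeping rather than conceptual content. One must track the composition order carefully — in particular that the twist applied at the child attached in position $i$ is $\rho_{i}\pbrac{\Delta\sigma}$ and not its inverse, and that $\sigma$ permutes positions forward ($i \mapsto \sigma\pbrac{i}$) rather than backward — which is pinned down by reading \cref{thm:rhodef} and \cref{fig:rhoapp} literally. The genuinely substantive facts, namely that the retwisting $\rho_{i}\pbrac{\Delta\sigma}$ correctly preserves the mirror relations (so that $\sigma \cdot B$ is again a legitimate $k$-coding-tree fragment rather than an ill-formed object) and that it fixes $k+1$, are already established in \cref{thm:rhodef} and the surrounding discussion of \cref{ss:actct}, so in this proof they may simply be invoked.
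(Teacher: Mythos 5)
Your proposal is correct and follows essentially the same route as the paper's own proof: unwind the action of $\sigma$ as $\Delta\sigma$ on the linear order together with the recursive $\rho$-twists on each $W_{i}$, observe that the $X$-label imposes no condition, and compare slot by slot to obtain $\Delta^{-1}\pbrac{\rho_{i}\pbrac{\Delta\sigma}} W_{i} \cong W_{\sigma\pbrac{i}}$, with the converse by reassembly. The extra points you supply --- that $\rho_{i}\pbrac{\Delta\sigma}$ fixes $k+1$ so the statement is well-posed, and that rigidity of $\specname{L}_{k}$-structures forces the position-by-position comparison --- are details the paper leaves implicit, not a different argument.
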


\begin{proof}
  Recall that the action of $\sigma \in \symgp{k}$ is in fact the action of $\Delta \sigma \in \symgp{k+1}$.
  The $X$-label on the black root of $B$ is not affected by the application of $\Delta \sigma$, so no conditions on $\sigma$ are necessary to accommodate it.
  However, the $\specname{L}_{k}$-structure on the white children of the root is permuted by $\Delta \sigma$, and we apply to each of the $W_{i}$'s the action of $\Delta^{-1} \pbrac{\rho_{i} \pbrac{\Delta \sigma}}$.
  Thus, $\sigma$ is an automorphism of $B$ if and only if the combination of applying $\Delta \sigma$ to the linear order and $\Delta^{-1} \pbrac{\rho_{i} \pbrac{\Delta \sigma}}$ to each $W_{i}$ is an automorphism.
  Since $\sigma$ `carries' each $W_{i}$ onto $W_{\sigma \pbrac{i}}$, we must have that $\Delta^{-1} \pbrac{\rho_{i} \pbrac{\Delta \sigma}} W_{i} \cong W_{\sigma \pbrac{i}}$, as claimed.
  That this suffices is clear.
\end{proof}

Consider a structure $T$ of the $\symgp{k}$-species $\cty{k}$ and an element $\sigma \in \symgp{k}$.
As discussed in \cref{ss:codecomp}, $T$ is composed of a $Y$-label and a set of $X \cdot \specname{L}_{k} \pbracs[big]{\cty{k}}$-structures.
The permutation $\sigma$ acts trivially on $Y$ and $\specname{E}$ and acts on each of the component $X \cdot \specname{L}_{k} \pbracs[big]{\cty{k}}$-structures independently.
For each of these component structures, by \cref{lem:ctyinvar}, we have that $\sigma$ is an automorphism if and only if $\Delta \sigma$ carries each $\cty{k}$-structure to its $\Delta^{-1} \pbrac{\rho_{i} \pbrac{\Delta \sigma}}$-image.
Thus, when constructing $\sigma$-invariant $X \cdot \specname{L}_{k} \pbracs[big]{\cty{k}}$-structures, we must construct for each cycle of $\sigma$ a $\cty{k}$-structure which is invariant under the application of \emph{all} the permutations $\Delta^{-1} \pbrac{\rho_{i} \pbrac{\Delta \sigma}}$ which will be applied to it along the cycle.
For $c$ the chosen cycle of $\sigma$, this permutation is $\Delta^{-1} \pbrac{\prod_{i \in c} \rho_{i} \pbrac{\Delta \sigma}}$, where the product is taken over any chosen linearization of the cyclic order of the terms in the cycle.
Once a choice of such a $\cty{k}$-structure for each cycle of $\sigma$ is made, we can simply insert the structures into the $\specname{L}_{k}$-structure to build the desired $\sigma$-invariant $X \cdot \specname{L}_{k} \pbracs[big]{\cty{k}}$-structure.
Accordingly:
\begin{theorem}
  \label{thm:ctyfuncci}
  The $\symgp{k}$-cycle index for the species $\cty{k}$ is characterized by the recursive functional equation
  \begin{multline}
    \label{eq:ctyfuncci}
    \gcielt{\symgp{k}}{\cty{k}}{\sigma} = p_{1} \sbrac{y} \\
    \times \ci{\specname{E}} \circ \pbracs[Big]{p_{1} \sbrac{x} \cdot \prod_{c \in C \pbrac{\sigma}} \gci{\symgp{k}}{\cty{k}} \pbracs[Big]{\Delta^{-1} \prod_{i \in c} \rho_{i} \pbrac{\Delta \sigma}} \pbrac{p_{\abs{c}} \sbrac{x}, p_{2 \abs{c}} \sbrac{x}, \dots; p_{\abs{c}} \sbrac{y}, p_{2 \abs{c}} \sbrac{y}, \dots}}.
  \end{multline}
  where $C \pbrac{\sigma}$ denotes the set of cycles of $\sigma$ (as a $k$-permutation) and the inner product is taken with respect to any choice of linearization of the cyclic order of the elements of $c$.
\end{theorem}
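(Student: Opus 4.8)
The plan is to derive the equation from the functional decomposition of $\cty{k}$ in \cref{obs:funcdecompct}, namely $\cty{k} = Y \cdot \specname{E} \pbrac{X \cdot \specname{L}_{k} \pbracs[big]{\cty{k}}}$ of \cref{eq:ctyfunc}, by computing the $\symgp{k}$-cycle index factor by factor. First I would peel off the root: since the $Y$-label is untouched by the $\symgp{k}$-action and carries a single structure fixed by every label permutation and every $\sigma$, multiplicativity of the $\Gamma$-cycle index (\cref{def:gspecsumprod}) contributes the factor $p_{1} \sbrac{y}$. Writing $M \defeq X \cdot \specname{L}_{k} \pbracs[big]{\cty{k}}$ for the component species, the outer composition $\specname{E} \circ M$ is a genuine $\Gamma$-composition in the sense of \cref{def:gspeccomp}: the paragraph preceding the theorem observes that $\sigma$ acts on each $M$-component of a $\cty{k}$-structure \emph{independently}, which is exactly the hypothesis under which \cref{thm:gspeccompci} applies. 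Since $\symgp{k}$ acts trivially on $\specname{E}$, that theorem yields the plethysm $\ci{\specname{E}} \circ \gci{\symgp{k}}{M}$; evaluating its $\sigma$-component reduces the whole problem to computing the single term $\gcielt{\symgp{k}}{M}{\sigma}$, which is precisely the argument displayed inside $\ci{\specname{E}} \circ \pbrac{\cdots}$.

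The heart of the argument is this inner computation, and here the standard $\Gamma$-plethysm must \emph{not} be used: the action of $\sigma$ on the $\cty{k}$-structures sitting inside the linear order $\specname{L}_{k}$ is position-dependent (twisted by the maps $\rho_{i}$ of \cref{thm:rhodef}) rather than independent, so I would instead count $\sigma$-invariant $M$-structures directly from the automorphism criterion of \cref{lem:ctyinvar}. The $X$-label contributes $p_{1} \sbrac{x}$. For the $\specname{L}_{k} \pbracs[big]{\cty{k}}$-part, fix a cycle $c = \pbrac{i_{1}, \dots, i_{m}}$ of $\sigma$ and trace the criterion $\Delta^{-1} \pbrac{\rho_{i} \pbrac{\Delta \sigma}} W_{i} \cong W_{\sigma \pbrac{i}}$ around it: the $\cty{k}$-structure at one position determines all the others along the cycle, and consistency upon returning to $i_{1}$ forces that structure to be invariant under the accumulated twist $\Delta^{-1} \prod_{i \in c} \rho_{i} \pbrac{\Delta \sigma}$. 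I would first check that this accumulated twist really lies in the image of $\Delta$, so that $\Delta^{-1}$ makes sense: by \cref{eq:rhodef} each $\rho_{i} \pbrac{\Delta \sigma}$ fixes $k+1$ (since $i + (k+1) \equiv i$), hence so does their product. Counting the fixed points of $\sigma$ together with a label permutation then shows that each cycle $c$ contributes a factor equal to the $\Gamma$-cycle index of $\cty{k}$ evaluated at this accumulated twist, with every variable inflated by the cycle length, $p_{m} \sbrac{x} \mapsto p_{m \abs{c}} \sbrac{x}$ and likewise in the $y$-sort; the inflation records that a single representative substructure is copied $\abs{c}$ times around the cycle, so each of its label cycles is lengthened by the factor $\abs{c}$. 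Taking the product over all cycles of $\sigma$ gives exactly $\gcielt{\symgp{k}}{M}{\sigma} = p_{1} \sbrac{x} \cdot \prod_{c \in C \pbrac{\sigma}} \gci{\symgp{k}}{\cty{k}} \pbracs[big]{\Delta^{-1} \prod_{i \in c} \rho_{i} \pbrac{\Delta \sigma}} \pbrac{p_{\abs{c}} \sbrac{x}, \dots; p_{\abs{c}} \sbrac{y}, \dots}$.

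Assembling the pieces---the factor $p_{1} \sbrac{y}$ from the root, the outer plethysm by $\ci{\specname{E}}$, and the computed inner $\sigma$-term---produces the claimed identity. It is naturally recursive, since the right-hand side refers back to $\gci{\symgp{k}}{\cty{k}}$ at the twisted group elements $\Delta^{-1} \prod_{i \in c} \rho_{i} \pbrac{\Delta \sigma}$ rather than at $\sigma$ itself.

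I expect the main obstacle to be the inner fixed-point count: precisely justifying the variable inflation $p_{m} \mapsto p_{m \abs{c}}$ and verifying that the accumulated twist around a cycle is the correct single group element---independent of the linearization of the cyclic order chosen to form the product, and with the composition taken in the right order---under which the representative substructure must be invariant. Separating the level at which $\Gamma$-plethysm legitimately applies (the outer set $\specname{E}$, with independent action) from the level at which it fails (the inner $\specname{L}_{k}$, with the position-dependent $\rho$-twisting) is the conceptual crux that \cref{lem:ctyinvar} is designed to handle.
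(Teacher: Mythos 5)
Your proposal follows the paper's own argument essentially step for step: the same decomposition $\cty{k} = Y \cdot \specname{E} \pbrac{X \cdot \specname{L}_{k} \pbracs[big]{\cty{k}}}$ from \cref{obs:funcdecompct}, the same observation that the outer $\specname{E}$-composition is legitimate (trivial, independent action) while the inner $\specname{L}_{k}$ is not, and the same use of \cref{lem:ctyinvar} to accumulate the $\rho$-twists around each cycle $c$ of $\sigma$ into the single element $\Delta^{-1} \prod_{i \in c} \rho_{i} \pbrac{\Delta \sigma}$ whose invariant $\cty{k}$-structures contribute with variables inflated by $\abs{c}$. Your extra checks---that the accumulated twist fixes $k+1$ so $\Delta^{-1}$ is defined, and the justification of the inflation $p_{m} \mapsto p_{m \abs{c}}$---are details the paper leaves implicit, but the route is the same.
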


The situation for the $\symgp{k+1}$-species $\ctxy{k}$ is almost identical.
Recall from \cref{ss:actct} that $\sigma \in \symgp{k+1}$ acts on a $\ctxy{k}$-structure $T$ by applying $\sigma$ directly to the linear order on the $k+1$ white neighbors of the root black vertex and applying $\rho$-variants of $\sigma$ recursively to their descendants.
Thus, we once again need only require that, along each cycle of $\sigma$, the successive white-vertex structures are pairwise isomorphic under the action of the appropriate $\rho_{i} \pbrac{\sigma}$.
Thus, we again need only choose for each cycle $c \in C \pbrac{\sigma}$ a $\cty{k}$-structure which is invariant under $\prod_{i \in c} \rho_{i} \pbrac{\sigma}$.
Accordingly:
\begin{theorem}
  \label{thm:ctxyfuncci}
  The $\symgp{k+1}$-cycle index for the species $\ctxy{k}$ is given by
  \begin{multline}
    \label{eq:ctxyfuncci}
    \gcielt{\symgp{k+1}}{\ctxy{k}}{\sigma} = p_{1} \sbrac{x} \\
    \times \prod_{c \in C \pbrac{\sigma}} \gci{\symgp{k}}{\cty{k}} \pbracs[Big]{\prod_{i \in c} \rho_{i} \sbrac{\sigma}} \pbrac{p_{\abs{c}} \sbrac{x}, p_{2 \abs{c}} \sbrac{x}, \dots, p_{\abs{c}} \sbrac{y}, p_{2 \abs{c}} \sbrac{y}, \dots}.
  \end{multline}
  under the same conditions as \cref{thm:ctyfuncci}.
\end{theorem}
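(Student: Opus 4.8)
The plan is to follow the template already established for \cref{thm:ctyfuncci}, exploiting the functional decomposition $\ctxy{k} = X \cdot \specname{L}_{k+1} \pbracs[big]{\cty{k}}$ from \cref{eq:ctxyfunc} together with the explicit description of the $\symgp{k+1}$-action recorded in \cref{ss:actct}. First I would establish the analogue of \cref{lem:ctyinvar} in this setting: writing $W_{1}, \dots, W_{k+1}$ for the $\cty{k}$-structures occupying positions $1, \dots, k+1$ of the linear order, the permutation $\sigma \in \symgp{k+1}$ acts as an automorphism of the $\specname{L}_{k+1} \pbracs[big]{\cty{k}}$-structure exactly when $\rho_{i} \pbrac{\sigma} W_{i} \cong W_{\sigma \pbrac{i}}$ for every $i$. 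Unlike \cref{lem:ctyinvar}, no $\Delta$ appears, since here $\sigma$ acts directly on all $k+1$ white neighbors of the root rather than through the embedding $\symgp{k} \hookrightarrow \symgp{k+1}$. The single $X$-label on the root black vertex is fixed by every $\sigma$ and contributes the leading factor $p_{1} \sbrac{x}$.

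Next I would reduce the invariance condition to independent conditions, one per cycle of $\sigma$. Along a cycle $c = \pbrac{i_{1}, i_{2}, \dots, i_{\ell}}$, the relations $W_{i_{j+1}} \cong \rho_{i_{j}} \pbrac{\sigma} W_{i_{j}}$ show that the structures at all positions of $c$ are determined up to isomorphism by the single structure $W_{i_{1}}$, and closing up the cycle forces $W_{i_{1}}$ to be invariant under the accumulated permutation $\prod_{i \in c} \rho_{i} \pbrac{\sigma}$. Thus a $\sigma$-invariant structure amounts to an independent choice, for each cycle $c \in C \pbrac{\sigma}$, of a $\cty{k}$-structure fixed by $\prod_{i \in c} \rho_{i} \pbrac{\sigma}$, and these are enumerated precisely by $\gci{\symgp{k}}{\cty{k}} \pbracs[big]{\prod_{i \in c} \rho_{i} \pbrac{\sigma}}$; the recursion drops from $\symgp{k+1}$ to $\symgp{k}$ because each $\rho_{i} \pbrac{\sigma}$ fixes $k+1$ and hence lands in $\symgp{k}$.

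To convert this count into the claimed cycle-index factor, I would observe that the chosen representative $\cty{k}$-structure is replicated across all $\abs{c}$ positions of the cycle, so any label lying in a $j$-cycle of its internal automorphism is carried into a single $\pbrac{j \abs{c}}$-cycle of the overall label permutation; this is exactly the plethystic substitution $p_{j} \sbrac{x} \mapsto p_{j \abs{c}} \sbrac{x}$ and $p_{j} \sbrac{y} \mapsto p_{j \abs{c}} \sbrac{y}$ appearing in the arguments of \cref{eq:ctxyfuncci}. Because the top structure is a \emph{linear} order with pairwise-distinct positions there is no outer set-symmetrization (in contrast to the $\specname{E}$-factor of \cref{thm:ctyfuncci}), so the contributions of distinct cycles simply multiply, yielding the product $\prod_{c \in C \pbrac{\sigma}}$ and hence \cref{eq:ctxyfuncci}.

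The main obstacle I anticipate is the well-definedness of the accumulated permutation $\prod_{i \in c} \rho_{i} \pbrac{\sigma}$, whose value depends a priori on the choice of starting point and linearization of the cyclic order of $c$. Different choices produce cyclic rotations of the product, which are conjugate in $\symgp{k}$; the resolution is that $\gcielt{\symgp{k}}{\cty{k}}{\gamma}$ is a class function of $\gamma$ --- since the commuting $\symgp{k}$- and label-actions give $\fix \pbrac{\delta \gamma \delta^{-1} \cdot \cty{k} \sbrac{\tau}} = \fix \pbrac{\gamma \cdot \cty{k} \sbrac{\tau}}$ --- so the ambiguity is harmless. I would also verify that the mutual recursion with \cref{thm:ctyfuncci} bottoms out correctly at the leaves of the coding tree, but this is routine once the cycle decomposition above is in hand.
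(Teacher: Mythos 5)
Your proposal is correct and follows essentially the same route as the paper: the paper likewise works from the decomposition $\ctxy{k} = X \cdot \specname{L}_{k+1} \pbracs[big]{\cty{k}}$, notes that $\sigma \in \symgp{k+1}$ acts directly on the root's linear order (no $\Delta$-embedding) with $\rho$-twists propagating recursively to descendants, and reduces $\sigma$-invariance to choosing, for each cycle $c \in C \pbrac{\sigma}$, a single $\cty{k}$-structure invariant under $\prod_{i \in c} \rho_{i} \pbrac{\sigma}$, with the plethystic substitution $p_{j} \mapsto p_{j \abs{c}}$ and no outer $\specname{E}$-symmetrization, exactly as you argue. Your explicit resolution of the linearization ambiguity in the cyclic product via the class-function property of $\gci{\symgp{k}}{\cty{k}}$ is a point the paper itself defers to its later results (\cref{lem:rhoprod} and \cref{thm:ctciclassfunc}), so including it up front is a modest improvement rather than a departure.
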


Terms of the form $\prod_{i \in c} \rho_{i} \pbrac{\sigma}$ appear in \cref{eq:ctyfuncci,eq:ctxyfuncci}.
For the simplification of calculations, we note here a two useful results about these products.

First, we observe that certain $\rho$-maps preserve cycle structure:
\begin{lemma}
  \label{lem:rhofp}
  Let $\sigma \in \symgp{k}$ be a permutation of which $i \in \sbrac{k}$ is a fixed point and let $\lambda$ be the map sending each permutation in $\symgp{k}$ to its cycle type as a partition of $k$.
  Then $\lambda \pbrac{\rho_{i} \pbrac{\sigma}} = \lambda \pbrac{\sigma}$.
\end{lemma}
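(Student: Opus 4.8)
The plan is to exhibit $\rho_{i} \pbrac{\sigma}$ as a \emph{conjugate} of $\sigma$ inside $\symgp{k+1}$, from which equality of cycle types follows immediately. Throughout I read $\rho_{i} \pbrac{\sigma}$ through the embedding $\Delta$: the permutation $\sigma \in \symgp{k}$ is regarded as $\Delta \sigma \in \symgp{k+1}$ fixing $k+1$, the map $\rho_{i}$ of \cref{eq:rhodef} is applied with all arithmetic modulo $k+1$, and the result is read back into $\symgp{k}$. A preliminary check using $i + \pbrac{k+1} \equiv i$ shows $\rho_{i} \pbrac{\sigma} \pbrac{k+1} = \sigma \pbrac{i} - \sigma \pbrac{i} \equiv k+1$, so $\rho_{i} \pbrac{\sigma}$ fixes $k+1$ and the passage back to $\symgp{k}$ is legitimate.

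The key step is to introduce, for each $j$, the cyclic shift $\tau_{j} \in \symgp{k+1}$ given by $\tau_{j} \pbrac{a} = a + j$ (again reduced modulo $k+1$ into $\cbrac{1, \dots, k+1}$), which is a single $\pbrac{k+1}$-cycle. Reading \cref{eq:rhodef} directly, for every $a$ we have $\rho_{i} \pbrac{\sigma} \pbrac{a} = \sigma \pbrac{i + a} - \sigma \pbrac{i} = \tau_{-\sigma \pbrac{i}} \pbrac{\sigma \pbrac{\tau_{i} \pbrac{a}}}$, so that as permutations of $\cbrac{1, \dots, k+1}$ we obtain the factorization
\begin{equation*}
  \rho_{i} \pbrac{\sigma} = \tau_{-\sigma \pbrac{i}} \circ \sigma \circ \tau_{i}.
\end{equation*}
This identity holds for any $\sigma$, with no hypothesis on $i$.

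Now I would invoke the fixed-point hypothesis. Since $\sigma \pbrac{i} = i$, the left-hand shift becomes $\tau_{-\sigma \pbrac{i}} = \tau_{-i} = \tau_{i}^{-1}$, and the factorization collapses to a genuine conjugation
\begin{equation*}
  \rho_{i} \pbrac{\sigma} = \tau_{i}^{-1} \circ \sigma \circ \tau_{i}.
\end{equation*}
Conjugate permutations of $\cbrac{1, \dots, k+1}$ share a cycle type, so $\rho_{i} \pbrac{\sigma}$ and $\sigma$ have equal cycle types as elements of $\symgp{k+1}$. Finally, both permutations fix $k+1$, so each cycle type (as a partition of $k+1$) is that of its restriction to $\sbrac{k}$ together with one extra part equal to $1$ coming from the fixed point $k+1$; cancelling this common part gives $\lambda \pbrac{\rho_{i} \pbrac{\sigma}} = \lambda \pbrac{\sigma}$ as partitions of $k$, as claimed.

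I expect the only delicate point to be the bookkeeping between the two arithmetic conventions---$\symgp{k}$ acting on $\sbrac{k}$ versus $\symgp{k+1}$ acting modulo $k+1$---and in particular confirming that it is precisely the distinguished fixed point $k+1$ being cancelled at the end. Everything else is a direct, hypothesis-free computation followed by the standard invariance of cycle type under conjugation.
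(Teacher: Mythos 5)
Your proof is correct, and it reorganizes the paper's argument along a genuinely different (and cleaner) axis. The paper proves the lemma by an inductive computation of powers: it shows $\pbrac{\rho_{i} \pbrac{\sigma}}^{j} \pbrac{a} = \sigma^{j} \pbrac{i+a} - \sigma^{j} \pbrac{i}$, checks that these values are distinct for $j$ up to the cycle length of $i+a$ and then return to $a$, and concludes that $a \mapsto i+a$ gives a length-preserving bijection between cycles of $\rho_{i} \pbrac{\sigma}$ and cycles of $\sigma$. That bijection is exactly conjugation by your shift $\tau_{i}$; the paper uses it without ever naming it, whereas your hypothesis-free factorization $\rho_{i} \pbrac{\sigma} = \tau_{-\sigma(i)} \circ \sigma \circ \tau_{i}$ makes it explicit at the level of group elements and isolates precisely where the fixed-point assumption enters (it collapses $\tau_{-\sigma(i)}$ to $\tau_{i}^{-1}$, turning the factorization into a genuine conjugation), after which the standard invariance of cycle type under conjugation finishes the proof. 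Your handling of the $\symgp{k}$-versus-$\symgp{k+1}$ bookkeeping is also more careful than the paper's: reading $\rho_{i} \pbrac{\sigma}$ as $\Delta^{-1} \pbrac{\rho_{i} \pbrac{\Delta \sigma}}$ and cancelling the common fixed point $k+1$ at the end is the right interpretation, since every later use of $\rho$ either goes through $\Delta$ or applies directly to elements of $\symgp{k+1}$. A further dividend of your identity is that it yields \cref{lem:rhoprod} almost for free: for a cycle $c = \pbrac{c_{1}, \dots, c_{\abs{c}}}$ of $\sigma$, so that $\sigma \pbrac{c_{j}} = c_{j+1}$ cyclically, the adjacent shifts $\tau_{c_{j+1}} \circ \tau_{-\sigma(c_{j})}$ in the product $\prod_{i \in c} \rho_{c_{i}} \pbrac{\sigma}$ cancel in pairs, so the whole product telescopes to $\tau_{c_{1}}^{-1} \circ \sigma^{\abs{c}} \circ \tau_{c_{1}}$, a conjugate of $\sigma^{\abs{c}}$ --- recovering the conclusion that the paper obtains there by repeating essentially the same telescoping computation it used in this lemma. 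What the paper's route buys in exchange is self-containedness: it never needs to pass to $\symgp{k+1}$ or invoke any group-theoretic fact beyond the definition of $\rho_{i}$, and its one computational engine serves both lemmas.
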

\begin{proof}
  Suppose $i + a \in \sbrac{k}$ is in an $l$-cycle of $\sigma$.
  Then
  \begin{align*}
    \pbrac{\rho_{i} \pbrac{\sigma}}^{j} \pbrac{a} =& \pbrac{\rho_{i} \pbrac{\sigma}}^{j - 1} \pbrac{\sigma \pbrac{i + a} - \sigma \pbrac{i}} \\
    =& \pbrac{\rho_{i} \pbrac{\sigma}}^{j - 2} \pbrac{\sigma \pbrac{i + \sigma \pbrac{i + a} - \sigma \pbrac{i}} - \sigma \pbrac{i}} \\
    =& \pbrac{\rho_{i} \pbrac{\sigma}}^{j - 2} \pbrac{\sigma^{2} \pbrac{i + a} - \sigma^{2} \pbrac{i}} \\
    &\vdots \\
    =& \sigma^{j} \pbrac{i + a} - \sigma^{j} \pbrac{i}
  \end{align*}
  But the values of $\pbrac{\rho_{i} \pbrac{\sigma}}^{j} \pbrac{a} = \sigma^{j} \pbrac{i + a} - \sigma^{j} \pbrac{i}$ are all distinct for $j \leq l$, since $i + a$ is in an $l$-cycle and $i$ is a fixed point of $\sigma$.
  Furthermore, $\pbrac{\rho_{i} \pbrac{\sigma}}^{l} \pbrac{a} = \sigma^{l} \pbrac{i + a} = i+a$.
  Thus, $a$ is in an $l$-cycle of $\rho_{i} \pbrac{\sigma}$.
  This establishes a length-preserving bijection between cycles of $\rho_{i} \pbrac{\sigma}$ and cycles of $\sigma$, so their cycle types are equal.
\end{proof}

But then we note that the products in the above theorems are in fact permutations obtained by applying such $\rho$-maps:
\begin{lemma}
  \label{lem:rhoprod}
  Let $\sigma \in \symgp{k}$ be a permutation with a cycle $c$.
  Then $\lambda \pbrac{\prod_{i \in c} \rho_{i} \pbrac{\sigma}}$ is determined by $\lambda \pbrac{\sigma}$ and $\abs{c}$.
\end{lemma}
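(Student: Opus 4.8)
The plan is to recognize the composite $\prod_{i \in c} \rho_i\pbrac{\sigma}$ as a conjugate of a fixed power of $\sigma$, whose cycle type is then governed entirely by $\lambda\pbrac{\sigma}$ and $\abs{c}$ through the standard behaviour of cycle types under taking powers. Throughout I would view $\sigma \in \symgp{k}$ as an element of $\symgp{k+1}$ fixing $k+1$ via $\Delta$, so that the mod-$\pbrac{k+1}$ arithmetic of \cref{eq:rhodef} applies directly.

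First I would rewrite $\rho_i$ using the translation maps $\tau_j : a \mapsto a + j$ (arithmetic modulo $k+1$, with representatives in $\cbrac{1, \dots, k+1}$). Directly from \cref{eq:rhodef} one has
\[
  \rho_i\pbrac{\sigma} = \tau_{\sigma\pbrac{i}}^{-1} \circ \sigma \circ \tau_i,
\]
since $\tau_{\sigma\pbrac{i}}^{-1}\pbrac{\sigma\pbrac{\tau_i\pbrac{a}}} = \sigma\pbrac{a+i} - \sigma\pbrac{i}$. The virtue of this form is that each factor carries an ``incoming'' translation $\tau_i$ on the right and an ``outgoing'' translation $\tau_{\sigma\pbrac{i}}^{-1}$ on the left, which will cancel telescopically once the factors are ordered along the cycle.

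Next I would order the factors by the cyclic order on $c$. Writing $c = \pbrac{i_1\ i_2\ \cdots\ i_m}$ with $\sigma\pbrac{i_\ell} = i_{\ell+1}$ (indices cyclic, $m = \abs{c}$) and composing as $\rho_{i_m}\pbrac{\sigma} \circ \cdots \circ \rho_{i_1}\pbrac{\sigma}$, consecutive factors meet at $\tau_{i_{\ell+1}}\, \tau_{\sigma\pbrac{i_\ell}}^{-1} = \tau_{i_{\ell+1}}\tau_{i_{\ell+1}}^{-1} = \Id$, so all interior translations vanish. What survives is
\[
  \prod_{i \in c} \rho_i\pbrac{\sigma} = \tau_{\sigma\pbrac{i_m}}^{-1}\, \sigma^m\, \tau_{i_1} = \tau_{i_1}^{-1}\, \sigma^m\, \tau_{i_1},
\]
a conjugate of $\sigma^m$; hence its cycle type is $\lambda\pbrac{\sigma^m}$. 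Since the cycle type of $\sigma^m$ is completely determined by $\lambda\pbrac{\sigma}$ and $m$ (each $\ell$-cycle of $\sigma$ splits into $\gcd\pbrac{\ell,m}$ cycles of length $\ell/\gcd\pbrac{\ell,m}$), and $m = \abs{c}$, the conclusion follows. I would also check that the telescoped conjugate fixes $k+1$ — indeed $\tau_{i_1}^{-1}\sigma^m\tau_{i_1}$ sends $k+1 \mapsto i_1 \mapsto i_1 \mapsto k+1$, because $\sigma^m\pbrac{i_1} = i_1$ — so that $\Delta^{-1}$ applies and the resulting cycle type is genuinely a partition of $k$. This recovers and generalizes \cref{lem:rhofp}, which is exactly the fixed-point case $m = 1$.

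The main obstacle is bookkeeping rather than conceptual: I must fix the convention by which the product ``with respect to a linearization of the cyclic order'' is formed so that the translations actually cancel, namely that consecutive factors correspond to consecutive elements of the cycle. I must also confirm independence of the chosen linearization. Starting the product at a different element $i_\ell$ simply replaces $\tau_{i_1}$ by $\tau_{i_\ell}$, yielding another conjugate of $\sigma^m$ with the same cycle type; hence $\lambda\pbrac{\prod_{i\in c}\rho_i\pbrac{\sigma}}$ is well-defined and depends only on $\lambda\pbrac{\sigma}$ and $\abs{c}$, as claimed.
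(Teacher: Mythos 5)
Your proof is correct, and its computational core is the same telescoping along the cycle that the paper uses, but you finish by a genuinely different (and cleaner) mechanism. The paper composes the maps pointwise, using the convention $c_{j+1} = \sigma\pbrac{c_{j}}$ to collapse the product to $\rho_{c_{1}}\pbrac{\sigma^{\abs{c}}}$, and then must invoke \cref{lem:rhofp} --- itself proved by a separate orbit-tracking computation --- to conclude that this map has the cycle type of $\sigma^{\abs{c}}$. Your identity $\rho_{i}\pbrac{\sigma} = \tau_{\sigma\pbrac{i}}^{-1} \circ \sigma \circ \tau_{i}$ turns the same cancellation into a formal telescoping of group elements and exposes the end result $\tau_{i_{1}}^{-1}\sigma^{\abs{c}}\tau_{i_{1}}$ as an honest conjugate of $\sigma^{\abs{c}}$; note this is literally the paper's $\rho_{c_{1}}\pbrac{\sigma^{\abs{c}}}$, since $\sigma^{\abs{c}}$ fixes $c_{1}$. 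Conjugation-invariance of cycle type then finishes the argument with no auxiliary lemma, and \cref{lem:rhofp} falls out for free as the length-one case ($\rho_{i}\pbrac{\sigma} = \tau_{i}^{-1}\sigma\tau_{i}$ when $\sigma\pbrac{i} = i$), rather than being an input. Your write-up also makes explicit three points the paper leaves implicit: that the telescoping requires the linearization to list the cycle in $\sigma$-order; that the conclusion is independent of the starting point, since different starting points give conjugates by different translations; and that the product fixes $k+1$, which is exactly what justifies applying $\Delta^{-1}$ in \cref{thm:ctyfuncci}. The only caution is notational: the cancellation $\tau_{i_{\ell+1}}\tau_{\sigma\pbrac{i_{\ell}}}^{-1} = \Id$ depends on the product being ordered consistently with the cycle, which is a convention the lemma's statement (``any choice of linearization'') does not fix for you --- but you address precisely this in your final paragraph, so the proof stands.
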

\begin{proof}
  Let $c = \pbrac{c_{1}, c_{2}, \dots, c_{\abs{c}}}$.
  First, we calculate:
  \begin{align*}
    \prod_{i = 1}^{\abs{c}} \rho_{c_{i}} \pbrac{\sigma} =& \rho_{c_{\abs{c}}} \pbrac{\sigma} \circ \dots \circ \rho_{c_{2}} \pbrac{\sigma} \circ \rho_{c_{1}} \pbrac{\sigma} \\
  =& \rho_{c_{\abs{c}}} \pbrac{\sigma} \circ \dots \circ \rho_{c_{2}} \pbrac{\sigma} \pbrac{a \mapsto \sigma \pbrac{c_{1} + a} - \sigma \pbrac{c_{1}}} \\
  =& \rho_{c_{\abs{c}}} \pbrac{\sigma} \circ \dots \circ \rho_{c_{3}} \pbrac{\sigma} \pbrac{a \mapsto \sigma \pbrac{c_{2} + \sigma \pbrac{c_{1} + a} - \sigma \pbrac{c_{1}}} - \sigma \pbrac{c_{2}}} \\
  =& \rho_{c_{\abs{c}}} \pbrac{\sigma} \circ \dots \circ \rho_{c_{3}} \pbrac{\sigma} \pbrac{a \mapsto \sigma^{2} \pbrac{c_{1} + a} - \sigma^{2} \pbrac{c_{1}}} \\
  &\vdots \\
  =& a \mapsto \sigma^{\abs{c}} \pbrac{c_{1} + a} - \sigma^{\abs{c}} \pbrac{c_{1}} \\
  =& \rho_{c_{1}} \pbrac{\sigma^{\abs{c}}}.
  \end{align*}
  But $c_{1}$ is a fixed point of $\sigma^{\abs{c}}$, so by the result of \cref{lem:rhofp}, this has the same cycle structure as $\sigma^{\abs{c}}$, which in turn is determined by $\lambda \pbrac{\sigma}$ and $\abs{c}$ as desired.
\end{proof}

From this and the fact that the terms of $X$-degree $1$ in all $\gci{\cty{k}}{\symgp{k}}$ and $\gci{\ctxy{k}}{\symgp{k+1}}$ are equal (to $p_{1} \sbrac{x} p_{1} \sbrac{y}^{k+1}$), we can conclude that:
\begin{theorem}
  \label{thm:ctciclassfunc}
  $\gcielt{\cty{k}}{\symgp{k}}{\sigma}$ and $\gcielt{\ctxy{k}}{\symgp{k+1}}{\sigma}$ are class functions of $\sigma$ (that is, they are constant over permutations of fixed cycle type).
\end{theorem}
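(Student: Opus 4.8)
The plan is to establish both assertions by strong induction on the $X$-degree $n$ (the number of hedra, equivalently the number of black vertices), using the recursive functional equation \cref{eq:ctyfuncci} to handle $\cty{k}$ and then reading the $\ctxy{k}$ statement off \cref{eq:ctxyfuncci} directly. The guiding principle is that \cref{lem:rhofp,lem:rhoprod} guarantee that every permutation fed recursively into $\gci{\symgp{k}}{\cty{k}}$ on the right-hand sides has a cycle type determined entirely by $\lambda \pbrac{\sigma}$ and the cycle lengths of $\sigma$, so that cycle-type invariance propagates up the recursion.

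First I would set up the induction for $\gcielt{\symgp{k}}{\cty{k}}{\sigma}$. For the base case, the $X$-degree-$0$ part of $\gci{\symgp{k}}{\cty{k}} \pbrac{\sigma}$ is simply $p_{1} \sbrac{y}$, the lone $Y$-root with no black children, which is manifestly independent of $\sigma$. For the inductive step I observe that in \cref{eq:ctyfuncci} the explicit factor $p_{1} \sbrac{x}$ forces every $X \cdot \specname{L}_{k} \pbracs[big]{\cty{k}}$-component appearing under the $\ci{\specname{E}}$-plethysm to carry $X$-degree at least $1$; since the variable rescaling $p_{j} \sbrac{x} \mapsto p_{j \abs{c}} \sbrac{x}$ and the outer plethysm only raise degree, the coefficient of any monomial of total $X$-degree $n$ depends only on the coefficients of $\gci{\symgp{k}}{\cty{k}}$ in $X$-degrees strictly below $n$. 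Thus the induction hypothesis applies to every recursive reference.

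The crux is then to show that the multiset of cycle types of the arguments $\Delta^{-1} \prod_{i \in c} \rho_{i} \pbrac{\Delta \sigma}$, taken over $c \in C \pbrac{\sigma}$, is a function of $\lambda \pbrac{\sigma}$ alone. Here I would apply \cref{lem:rhoprod} to $\Delta \sigma \in \symgp{k+1}$ with each cycle $c$: it gives that $\lambda \pbrac{\prod_{i \in c} \rho_{i} \pbrac{\Delta \sigma}}$ is determined by $\lambda \pbrac{\Delta \sigma}$ and $\abs{c}$. The computation proving that lemma collapses the product to $\rho_{c_{1}} \pbrac{\pbrac{\Delta\sigma}^{\abs{c}}}$, which fixes $k+1$, so that $\Delta^{-1}$ is legitimate and merely deletes a fixed point; hence the argument's cycle type, as a partition of $k$, is a function of $\lambda \pbrac{\sigma}$ and $\abs{c}$. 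Because the cycles of $\sigma$ are determined up to their lengths by $\lambda \pbrac{\sigma}$, the entire multiset of argument cycle types is fixed by $\lambda \pbrac{\sigma}$. Combining this with the induction hypothesis, and noting that the remaining operations (multiplication by $p_{1} \sbrac{y}$, the rescalings, and the $\ci{\specname{E}}$-plethysm) are applied identically regardless of $\sigma$, the degree-$n$ part of $\gcielt{\symgp{k}}{\cty{k}}{\sigma}$ depends only on $\lambda \pbrac{\sigma}$, closing the induction.

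Finally, the $\ctxy{k}$ case needs no further induction: with $\gci{\symgp{k}}{\cty{k}}$ now a class function in every degree, \cref{eq:ctxyfuncci} expresses $\gcielt{\symgp{k+1}}{\ctxy{k}}{\sigma}$ non-recursively as $p_{1} \sbrac{x}$ times a product of the class-function values $\gci{\symgp{k}}{\cty{k}} \pbrac{\prod_{i \in c} \rho_{i} \sbrac{\sigma}}$ (each argument again fixing $k+1$ and so lying in $\symgp{k}$), and \cref{lem:rhoprod} shows their cycle types, hence the whole product, depend only on $\lambda \pbrac{\sigma}$. I expect the main obstacle to be the degree-tracking bookkeeping, namely verifying rigorously that the $\ci{\specname{E}}$-plethysm composed with the inner rescalings never references $\gci{\symgp{k}}{\cty{k}}$ in degree $\geq n$, rather than the cycle-type reasoning, which follows cleanly from the two preceding lemmas.
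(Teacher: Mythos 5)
Your proof is correct and follows essentially the same route as the paper: the paper's own (very terse) justification is precisely an induction on $X$-degree through the recursions \cref{eq:ctyfuncci,eq:ctxyfuncci}, with \cref{lem:rhofp,lem:rhoprod} guaranteeing that the cycle types of the recursively referenced permutations $\Delta^{-1}\prod_{i \in c}\rho_{i}\pbrac{\Delta\sigma}$ depend only on $\lambda\pbrac{\sigma}$ and $\abs{c}$. The only cosmetic difference is the base case---you anchor at $X$-degree $0$ (the bare $p_{1}\sbrac{y}$ term) while the paper anchors at the $X$-degree-$1$ terms---and your explicit degree-tracking simply fills in bookkeeping that the paper leaves implicit.
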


This will simplify computational enumeration of $k$-trees significantly, since the number of partitions of $k$ grows exponentially while the number of permutations of $\sbrac{k}$ grows factorially.

\subsection{$k$-trees: $\kt{k}$}\label{ss:ktcycind}
We now have all the pieces in hand to apply \cref{thm:dissymkci} to compute the cycle index of the species $\kt{k}$ of general $k$-trees.
\Cref{eq:dissymkci} characterizes the cycle index of the generic $k$-tree species $\kt{k}$ in terms of the cycle indices of the rooted species $\ktx{k}$, $\kty{k}$, and $\ctxy{k}$; \cref{thm:arootquot} gives the cycle indices of these three rooted species in terms of the $\Gamma$-cycle indices $\gci{\symgp{k}}{\cty{k}}$, $\gci{\symgp{k}}{\ctxy{k}}$, and $\gci{\symgp{k+1}}{\ctxy{k}}$; and, finally, \cref{thm:ctyfuncci,thm:ctxyfuncci} give these $\Gamma$-cycle indices explicitly.
By tracing the formulas in \cref{eq:ctyfuncci,eq:ctxyfuncci} back through this sequence of functional relationships, we can conclude:
\begin{theorem}[Cycle index for the species of $k$-trees]
  \label{thm:akci}
  For $\mathfrak{a}_{k}$ the species of general $k$-trees, $\gci{\symgp{k}}{\cty{k}}$ as in \cref{eq:ctyfuncci}, and $\gci{\symgp{k+1}}{\ctxy{k}}$ as in \cref{eq:ctxyfuncci} we have:
  \label{thm:ktreecyc}
  \begin{subequations}
    \label{eq:akci}
    \begin{align}
      \ci{\kt{k}} &= \frac{1}{\pbrac{k+1}!} \sum_{\sigma \in \symgp{k+1}} \gcielt{\symgp{k+1}}{\ctxy{k}}{\sigma} + \frac{1}{k!} \sum_{\sigma \in \symgp{k}} \gcielt{\symgp{k}}{\cty{k}}{\sigma} - \frac{1}{k!} \sum_{\sigma \in \symgp{k}} \gcielt{\symgp{k}}{\ctxy{k}}{\sigma} \label{eq:akciexplicit} \\
      &= \qgci{\symgp{k+1}}{\ctxy{k}} + \qgci{\symgp{k}}{\cty{k}} - \qgci{\symgp{k}}{\ctxy{k}}. \label{eq:akciquot}
    \end{align}
  \end{subequations}

\end{theorem}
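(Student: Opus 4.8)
The plan is to obtain \cref{eq:akci} by pure assembly: at the level of this theorem no new combinatorial input is needed, since every genuine argument has already been carried out in the supporting results. First I would invoke the cycle-index form of the dissymmetry theorem, \cref{thm:dissymkci}, which expresses the cycle index of the generic species as
\[
  \ci{\kt{k}} = \ci{\ktx{k}} + \ci{\kty{k}} - \ci{\ktxy{k}},
\]
using the three rooted species appearing in \cref{cor:dissymkreform}.

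Next I would rewrite each of these three rooted cycle indices by means of the quotient identities of \cref{thm:aquotci}. By \cref{eq:axquotci}, \cref{eq:ayquotci}, and \cref{eq:axyquotci} respectively we have $\ci{\ktx{k}} = \qgci{\symgp{k+1}}{\ctxy{k}}$, $\ci{\kty{k}} = \qgci{\symgp{k}}{\cty{k}}$, and $\ci{\ktxy{k}} = \qgci{\symgp{k}}{\ctxy{k}}$; these in turn rest on the quotient-species identifications $\kty{k} = \faktor{\cty{k}}{\symgp{k}}$, $\ktxy{k} = \faktor{\ctxy{k}}{\symgp{k}}$, and $\ktx{k} = \faktor{\ctxy{k}}{\symgp{k+1}}$ recorded in \cref{thm:arootquot}. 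Substituting the three expressions into the dissymmetry identity yields the compact form \cref{eq:akciquot} at once.

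To reach the fully explicit form \cref{eq:akciexplicit}, I would then expand each quotient operator using its defining average from \cref{thm:qsci}, namely $\qgci{\Gamma}{F} = \frac{1}{\abs{\Gamma}} \sum_{\gamma \in \Gamma} \gcielt{\Gamma}{F}{\gamma}$, taking $\Gamma = \symgp{k+1}$ in the first summand and $\Gamma = \symgp{k}$ in the other two. The individual $\Gamma$-cycle indices $\gcielt{\symgp{k+1}}{\ctxy{k}}{\sigma}$ and $\gcielt{\symgp{k}}{\cty{k}}{\sigma}$ that appear under these sums are supplied explicitly (indeed recursively) by \cref{thm:ctxyfuncci,thm:ctyfuncci}, so the formula is complete exactly as stated.

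The only point demanding attention is bookkeeping rather than mathematics: one must keep straight that the group acting in the $\ktx{k}$ term is $\symgp{k+1}$ whereas the other two terms quotient by $\symgp{k}$, and that the subtracted species is the edge-rooted $\ktxy{k} = \faktor{\ctxy{k}}{\symgp{k}}$ taken under $\symgp{k}$ rather than under $\symgp{k+1}$. Because all three rooted components have already been identified as quotients in \cref{thm:arootquot}, there is no real obstacle at this stage; the substantive difficulty of the development was localized earlier, in establishing the dissymmetry theorem, the coherent-orientation encoding, and the recursive $\Gamma$-cycle-index equations.
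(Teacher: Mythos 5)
Your proposal is correct and follows essentially the same route as the paper: the paper likewise obtains \cref{eq:akci} by combining the cycle-index dissymmetry identity of \cref{thm:dissymkci} with the quotient identifications of \cref{thm:arootquot} (in cycle-index form, \cref{thm:aquotci} via \cref{thm:qsci}), with the explicit $\Gamma$-cycle indices supplied by \cref{thm:ctyfuncci,thm:ctxyfuncci}. Your bookkeeping remark about which group ($\symgp{k}$ versus $\symgp{k+1}$) acts in each term is exactly the point the paper's statement encodes in the notation $\qgci{\symgp{k}}{\ctxy{k}}$ versus $\qgci{\symgp{k+1}}{\ctxy{k}}$, so nothing is missing.
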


\Cref{eq:akci} in fact represents a recursive system of functional equations, since the formulas for the $\Gamma$-cycle indices of $\cty{k}$ and $\ctxy{k}$ are recursive.
Computational methods can yield explicit enumerative results.
However, a bit of care will allow us to reduce the computational complexity of this problem significantly.

\section{Unlabeled enumeration and the generating function $\tilde{\mathfrak{a}}_{k} \pbrac{x}$}\label{ss:ktunlenum}
\Cref{eq:akci} in \cref{thm:akci} gives a recursive formula for the cycle index of the ordinary species $\kt{k}$ of $k$-trees.
The number of unlabeled $k$-trees with $n$ hedra is historically an open problem, but by application of \cref{thm:ciogf} the ordinary generating function counting such structures can be extracted from the cycle index $\ci{\kt{k}}$.
Actually computing terms of the cycle index in order to derive the coefficients of the generating function is, however, a computationally expensive process, since the cycle index is by construction a power series in two infinite sets of variables.
The computational process can be simplified significantly by taking advantage of the relatively straightforward combinatorial structure of the structural decomposition used to derive the recursive formulas for the cycle index.

Recall from \cref{thm:gciogf} that, for a $\Gamma$-species $F$, the ordinary generating function $\tilde{F}_{\gamma} \pbrac{x}$ counting unlabeled $\gamma$-invariant $F$-structures is given by
\[\tilde{F} \pbracs[big]{\gamma} \pbrac{x} = \gcieltvars{\Gamma}{F}{\gamma}{x, x^2, x^3, \dots}\]
and that the ordinary generating function for counting unlabeled $\nicefrac{F}{\Gamma}$-structures is given by
\[\tilde{F} \pbrac{x} = \frac{1}{\abs{\Gamma}} \sum_{\gamma \in \Gamma} \tilde{F} \pbracs[big]{\gamma} \pbrac{x}.\]
These formula admits an obvious multisort extension, but we in fact wish to count $k$-trees with respect to just one sort of label (the $X$-labels on hedra), so we will not deal with multisort here.
Each of the two-sort cycle indices in this chapter can be converted to one-sort by substituting $y_{i} = 1$ for all $i$.
For the rest of this section, we will deal directly with these one-sort versions of the cycle indices.

We begin by considering the explicit recursive functional equations in \cref{thm:ctyfuncci,thm:ctxyfuncci}.
In each case, by the above, the ordinary generating function is exactly the result of substituting $p_{i} \sbrac{x} = x^{i}$ into the given formula.
Thus, we have:
\begin{theorem}
  \label{thm:ctrhoogf}
  For $\cty{k}$ the $\symgp{k}$-species of $Y$-rooted $k$-coding trees and $\ctxy{k}$ the $\symgp{k+1}$-species of edge-rooted $k$-coding trees, the corresponding $\Gamma$-ordinary generating functions are given by
  \begin{subequations}
    \label{eq:ctrhoogf}
    \begin{align}
      \widetilde{\cty{k}} \pbrac{\sigma} \pbrac{x} &= \exp \pbracs[Big]{ \sum_{n \geq 1} \frac{x^{n}}{n} \cdot \prod_{c \in C \pbrac{\sigma^{n}}} \widetilde{\cty{k}} \pbracs[Big]{\Delta^{-1} \prod_{i \in c} \rho_{i} \pbracs[big]{\Delta \sigma^{n}}} \pbrac{x^{\abs{c}}}} \label{eq:ctyrhoogf} \\
      \intertext{and}
      \widetilde{\ctxy{k}} \pbrac{\sigma} \pbrac{x} &= x \cdot \prod_{c \in C \pbrac{\sigma}} \widetilde{\cty{k}} \pbracs[Big]{\prod_{i \in c} \rho_{i} \pbrac{\sigma}} \pbracs[big]{x^{\abs{c}}}. \label{eq:ctxyrhoogf}
    \end{align}
  \end{subequations}
  where $\widetilde{\cty{k}}$ is an $\symgp{k}$-generating function and $\widetilde{\ctxy{k}}$ is an $\symgp{k+1}$-generating function.
\end{theorem}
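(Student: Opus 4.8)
The strategy is simply to push the unlabeled-enumeration substitution through the two explicit $\Gamma$-cycle-index formulas already in hand. By \cref{thm:gciogf}, together with the one-sort reduction $p_{i} \sbrac{y} \mapsto 1$ and the substitution $p_{i} \sbrac{x} \mapsto x^{i}$ described above, the $\Gamma$-ordinary generating functions $\widetilde{\cty{k}} \pbrac{\sigma}$ and $\widetilde{\ctxy{k}} \pbrac{\sigma}$ are obtained term-by-term from $\gcielt{\symgp{k}}{\cty{k}}{\sigma}$ (\cref{eq:ctyfuncci}) and $\gcielt{\symgp{k+1}}{\ctxy{k}}{\sigma}$ (\cref{eq:ctxyfuncci}) respectively. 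So the entire proof reduces to carrying out that substitution and recognising the result as a $\Gamma$-generating function evaluated at suitable powers of $x$. I would treat the two equations separately, since one is a bare product while the other conceals a plethystic exponential.

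For \cref{eq:ctxyrhoogf} the computation is immediate. The right-hand side of \cref{eq:ctxyfuncci} is $p_{1} \sbrac{x}$ times a finite product over the cycles of $\sigma$, with no composition involved. Substituting $p_{1} \sbrac{x} \mapsto x$ produces the leading factor $x$, while in each cycle factor the arguments $p_{\abs{c}} \sbrac{x}, p_{2 \abs{c}} \sbrac{x}, \dots$ become the successive powers of $x^{\abs{c}}$ and the $y$-arguments become $1$. Hence each factor collapses to the one-sort $\Gamma$-generating function $\widetilde{\cty{k}} \pbracs[big]{\prod_{i \in c} \rho_{i} \pbrac{\sigma}}$ evaluated at $x^{\abs{c}}$, which is exactly \cref{eq:ctxyrhoogf}. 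No delicate bookkeeping arises here because no plethystic variable-doubling intervenes.

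For \cref{eq:ctyrhoogf} the essential content is the plethysm $\ci{\specname{E}} \circ \pbrac{\cdots}$ in \cref{eq:ctyfuncci}, which comes from the structural decomposition $\cty{k} = Y \cdot \specname{E} \pbrac{H}$ with $H = X \cdot \specname{L}_{k} \pbracs[big]{\cty{k}}$ (\cref{eq:ctyfunc}). The plan is to expand this using the Euler-transform form of the set cycle index, $\ci{\specname{E}} = \exp \pbrac{\sum_{n \geq 1} p_{n} / n}$, interpreted through the $\Gamma$-plethysm of \cref{def:gcipleth,thm:gspeccompci}; since $\specname{E}$ carries the trivial action this yields
\[ \gcielt{\symgp{k}}{\specname{E} \pbrac{H}}{\sigma} = \exp \pbracs[Big]{\sum_{n \geq 1} \frac{1}{n} \gcielt{\symgp{k}}{H}{\sigma^{n}} \pbrac{p_{n}, p_{2n}, p_{3n}, \dots}}, \]
so that the $n$-th summand automatically evaluates $\gci{\symgp{k}}{H}$ at $\sigma^{n}$ — the source of the $\sigma^{n}$ and $\Delta \sigma^{n}$ in the $\rho$-products — and applies the plethystic shift $p_{j} \mapsto p_{jn}$. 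The leading $p_{1} \sbrac{y}$ becomes $1$, and after $p_{i} \sbrac{x} \mapsto x^{i}$ the factor $p_{1} \sbrac{x}$ inside $H$ contributes $x^{n}$, giving the prefactor $x^{n}/n$. The main obstacle is the clean composition of the \emph{two} nested variable rescalings: the cycle-length shift $p_{j} \mapsto p_{j \abs{c}}$ already built into the inner arguments of \cref{eq:ctyfuncci} (coming from $\specname{L}_{k} \pbracs[big]{\cty{k}}$) and the plethystic shift $p_{j} \mapsto p_{jn}$ from the $\specname{E}$-exponential. These must be tracked carefully to pin down the exact power of $x$ at which each inner factor $\widetilde{\cty{k}} \pbracs[big]{\Delta^{-1} \prod_{i \in c} \rho_{i} \pbrac{\Delta \sigma^{n}}}$ is evaluated. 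A reliable check on this bookkeeping is the specialisation $k = 1$, where $\specname{L}_{1}$ is trivial and the recursion must collapse to the classical functional equation for the ordinary generating function of unlabeled rooted trees; matching that equation fixes the powers of $x$ unambiguously and confirms the formula.
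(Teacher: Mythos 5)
Your approach is the same as the paper's, which in fact offers no argument beyond the remark preceding the theorem: by \cref{thm:gciogf} and the one-sort reduction $p_{i}\sbrac{y} \mapsto 1$, one substitutes $p_{i}\sbrac{x} = x^{i}$ into \cref{eq:ctyfuncci,eq:ctxyfuncci}. Your reading of \cref{eq:ctxyrhoogf} as a bare substitution is exactly right, and your expansion of the $\specname{E}$-plethysm via $\ci{\specname{E}} = \exp\pbrac{\sum_{n \geq 1} p_{n}/n}$ together with \cref{def:gcipleth,thm:gspeccompci} is precisely the computation the paper leaves implicit; it is indeed the source of the $\sigma^{n}$ and of the prefactor $x^{n}/n$.

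One substantive warning about the bookkeeping you defer: carried out honestly, it does \emph{not} reproduce the display as printed. The two rescalings compose --- the cycle shift $p_{j} \mapsto p_{j\abs{c}}$ built into \cref{eq:ctyfuncci}, followed by the plethystic shift $p_{j} \mapsto p_{jn}$, gives $p_{j} \mapsto p_{jn\abs{c}}$ --- so after $p_{j}\sbrac{x} \mapsto x^{j}$ each inner factor in \cref{eq:ctyrhoogf} is evaluated at $\pbrac{x^{n}}^{\abs{c}} = x^{n\abs{c}}$, not at $x^{\abs{c}}$. Your own $k = 1$ test detects exactly this: with inner argument $x^{n\abs{c}} = x^{n}$ the recursion collapses to the classical equation $\widetilde{\cty{1}}\pbrac{x} = \exp\pbrac{\sum_{n \geq 1} \frac{x^{n}}{n}\, \widetilde{\cty{1}}\pbrac{x^{n}}}$ for rooted trees counted by edges, whereas the formula read literally gives $\widetilde{\cty{1}}\pbrac{x} = \exp\pbrac{\widetilde{\cty{1}}\pbrac{x} \sum_{n \geq 1} x^{n}/n}$, whose degree-$3$ coefficient is $9/2$ --- not even an integer. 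So when your derivation disagrees with the printed statement, trust the derivation: the argument $x^{\abs{c}}$ in \cref{eq:ctyrhoogf} must be read as $\pbrac{x^{n}}^{\abs{c}}$, with the analogous emendation in \cref{cor:ctogf} (there $\lambda^{i}$ and $x^{i}$ should be $\lambda^{ni}$ and $x^{ni}$, consistent with \cref{lem:rhoprod}); the paper's tables are consistent with the corrected recursion. With that reading fixed, your proposal is a complete proof, and your $k=1$ check corrects the printed formula rather than confirms it.
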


However, as a consequence of \cref{thm:ctciclassfunc}, we can simplify these expressions significantly:
\begin{corollary}
  \label{cor:ctogf}
  For $\cty{k}$ the $\symgp{k}$-species of $Y$-rooted $k$-coding trees and $\ctxy{k}$ the $\symgp{k+1}$-species of edge-rooted $k$-coding trees, the corresponding $\Gamma$-ordinary generating functions are given by
  \begin{subequations}
    \label{eq:ctogf}
    \begin{align}
      \widetilde{\cty{k}} \pbrac{\lambda} \pbrac{x} &= \exp \pbracs[Big]{\sum_{n \geq 1} \frac{x^{n}}{n} \cdot \prod_{i \in \lambda^{n}} \widetilde{\cty{k}} \pbracs[big]{\lambda^{i}} \pbracs[big]{x^{i}}} \label{eq:ctyogf} \\
      \intertext{and}
      \widetilde{\ctxy{k}} \pbrac{\lambda} \pbrac{x} &= x \cdot \prod_{i \in \lambda} \widetilde{\cty{k}} \pbracs[big]{\lambda^{i}} \pbracs[big]{x^{i}} \label{eq:ctxyogf}
    \end{align}
  \end{subequations}
  where $\lambda^{i}$ denotes the $i$th `partition power' of $\lambda$ --- that is, if $\sigma$ is any permutation of cycle type $\lambda$, then $\lambda^{i}$ denotes the cycle type of $\sigma^{i}$ --- and where $f \pbrac{\lambda} \pbrac{x}$ denotes the value of $f \pbrac{\sigma} \pbrac{x}$ for every $\sigma$ of cycle type $\lambda$.
\end{corollary}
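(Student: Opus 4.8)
The plan is to obtain \cref{cor:ctogf} as a direct specialization of \cref{thm:ctrhoogf}, the only new inputs being the class-function property of \cref{thm:ctciclassfunc} and the cycle-type computation of \cref{lem:rhoprod}. \Cref{thm:ctrhoogf} already expresses $\widetilde{\cty{k}} \pbrac{\sigma} \pbrac{x}$ and $\widetilde{\ctxy{k}} \pbrac{\sigma} \pbrac{x}$ as products over the cycles of (powers of) $\sigma$, each factor being a value of $\widetilde{\cty{k}}$ at some $\rho$-product permutation. The content of the corollary is that all of these data depend on $\sigma$ only through its cycle type $\lambda$, so that the whole formula can be rewritten as a product over the parts of a partition.

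First I would invoke \cref{thm:ctciclassfunc}: because $\gcielt{\symgp{k}}{\cty{k}}{\sigma}$ and $\gcielt{\symgp{k+1}}{\ctxy{k}}{\sigma}$ are class functions of $\sigma$, so are their one-sort generating-function specializations $\widetilde{\cty{k}} \pbrac{\sigma} \pbrac{x}$ and $\widetilde{\ctxy{k}} \pbrac{\sigma} \pbrac{x}$ obtained by the substitution $p_{i} \sbrac{x} = x^{i}$. This legitimizes the notation $\widetilde{\cty{k}} \pbrac{\lambda} \pbrac{x}$ and lets me replace every permutation argument on the right-hand sides of \cref{eq:ctyrhoogf,eq:ctxyrhoogf} by its cycle type without changing the value. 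Next I would evaluate the inner factors: by \cref{lem:rhoprod}, for a cycle $c$ of a permutation the product $\prod_{i \in c} \rho_{i} \pbrac{\sigma}$ has the cycle type of $\sigma^{\abs{c}}$, a datum determined by $\lambda$ and $\abs{c}$ alone. For the edge-rooted case \cref{eq:ctxyrhoogf} this finishes the argument almost immediately: each factor $\widetilde{\cty{k}} \pbrac{\prod_{i \in c} \rho_{i} \pbrac{\sigma}} \pbrac{x^{\abs{c}}}$ becomes $\widetilde{\cty{k}} \pbrac{\lambda^{\abs{c}}} \pbrac{x^{\abs{c}}}$, and since the cycles $c \in C \pbrac{\sigma}$ are exactly the parts of $\lambda$ (with multiplicity), the product over cycles becomes the product over parts $i \in \lambda$ of \cref{eq:ctxyogf}.

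The $Y$-rooted case \cref{eq:ctyrhoogf} follows the same template but is where the real work lies. Here the product runs over the cycles of $\sigma^{n}$, whose lengths are exactly the parts of $\lambda^{n}$, and the inner argument is conjugated through the embedding $\Delta$ as $\Delta^{-1} \prod_{i \in c} \rho_{i} \pbrac{\Delta \sigma^{n}}$. I would handle this by using that $\Delta$ is a homomorphism, so that powering commutes with $\Delta$, together with \cref{lem:rhofp}, which guarantees that the relevant $\rho$-product fixes the adjoined point $k+1$ and therefore survives the passage back through $\Delta^{-1}$ with a controlled cycle type. The \emph{main obstacle} is precisely this bookkeeping: one must track how the P\'olya-style powering $\sigma \mapsto \sigma^{n}$ inside the exponential interacts with the cycle-wise $\rho$-products and the $\Delta$-conjugation, and verify that the resulting partition power is exactly the one recorded in \cref{eq:ctyogf}. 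Once the cycle type of each inner argument is pinned down and recognized as a partition power of $\lambda$, the class-function property collapses each factor to a value $\widetilde{\cty{k}} \pbrac{\lambda^{i}} \pbrac{x^{i}}$, and re-indexing $\prod_{c \in C \pbrac{\sigma^{n}}}$ as $\prod_{i \in \lambda^{n}}$ completes the derivation.
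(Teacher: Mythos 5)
Your proposal is correct and takes essentially the same route as the paper, which states this corollary with no explicit proof beyond declaring it a consequence of \cref{thm:ctrhoogf} and the class-function property of \cref{thm:ctciclassfunc}; the lemmas you invoke, \cref{lem:rhoprod} and \cref{lem:rhofp}, are exactly the machinery the paper proves in advance to justify that property and to identify the cycle types of the $\rho$-products as partition powers of $\lambda$. The only difference is that you spell out the cycle-by-cycle and $\Delta$-conjugation bookkeeping (and honestly flag it as the delicate step) where the paper leaves it entirely implicit.
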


As in \cref{thm:ctyfuncci}, we have recursively-defined functional equations, but these are recursions of power series in a single variable, so computing their terms is much less computationally expensive.
Also, as an immediate consequence of \cref{thm:ctciclassfunc}, we have that $\widetilde{\cty{k}}$ and $\widetilde{\ctxy{k}}$ are class functions of $\sigma$, so we can restrict our computational attention to cycle-distinct permutations.

Moreover, the cycle index of the species $\kt{k}$, as seen in \cref{eq:akci}, is given simply in terms of quotients of the cycle indices of the two $\Gamma$-species $\cty{k}$ and $\ctxy{k}$.
Thus, we also have:
\begin{theorem}
  \label{thm:akrhoogf}
  For $\kt{k}$ the species of $k$-trees and $\widetilde{\cty{k}}$ and $\widetilde{\ctxy{k}}$ as in \cref{thm:ctrhoogf}, we have
  \begin{equation}
    \label{eq:akrhoogf}
    \tilde{\mathfrak{a}}_{k} \pbrac{x} = \frac{1}{\pbrac{k+1}!} \sum_{\sigma \in \symgp{k+1}} \widetilde{\ctxy{k}} \pbrac{\sigma} \pbrac{x} + \frac{1}{k!} \sum_{\sigma \in \symgp{k}} \widetilde{\cty{k}} \pbrac{x} \pbrac{\sigma} - \frac{1}{k!} \sum_{\sigma \in \symgp{k}} \widetilde{\ctxy{k}} \pbrac{\sigma} \pbrac{x}.
  \end{equation}
\end{theorem}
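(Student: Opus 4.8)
The plan is to recognize this statement as nothing more than the ordinary-generating-function specialization of the cycle-index identity already established in \cref{thm:akci}. The two enabling facts are at hand: \cref{thm:ciogf} tells us that for any species $F$ the generating function of unlabeled $F$-structures is obtained from $\ci{F}$ by the substitution $p_{i} \mapsto x^{i}$, and \cref{thm:gciogf} tells us that this same substitution applied to the $\sigma$-term $\gcielt{\Gamma}{F}{\sigma}$ of a $\Gamma$-cycle index yields exactly the $\Gamma$-ordinary generating function $\widetilde{F} \pbrac{\sigma} \pbrac{x}$ counting unlabeled $\sigma$-invariant $F$-structures. Since the definitions in \cref{thm:ctrhoogf} declare $\widetilde{\cty{k}} \pbrac{\sigma} \pbrac{x}$ and $\widetilde{\ctxy{k}} \pbrac{\sigma} \pbrac{x}$ to be precisely these specializations, the result should fall out by applying the substitution to both sides of \cref{eq:akciexplicit}.

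Concretely, I would first pass to the one-sort cycle indices by setting every $y$-variable to $1$, as stipulated at the start of this section, so that each $\Gamma$-cycle index becomes a power series in the single family $p_{i} \sbrac{x}$. By \cref{thm:ciogf} this gives $\tilde{\mathfrak{a}}_{k} \pbrac{x} = \ci{\kt{k}} \pbracs[big]{x, x^{2}, x^{3}, \dots}$. Next I would substitute the expression for $\ci{\kt{k}}$ from \cref{eq:akciexplicit}; because the substitution $p_{i} \sbrac{x} \mapsto x^{i}$ is $\ringname{Q}$-linear, it commutes with the finite $\ringname{Q}$-linear combination of $\Gamma$-cycle-index terms there, namely the sums over $\symgp{k+1}$ and $\symgp{k}$ carrying the scalar weights $\tfrac{1}{\pbrac{k+1}!}$ and $\tfrac{1}{k!}$. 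Applying it termwise, each summand $\gcielt{\symgp{k+1}}{\ctxy{k}}{\sigma}$, $\gcielt{\symgp{k}}{\cty{k}}{\sigma}$, and $\gcielt{\symgp{k}}{\ctxy{k}}{\sigma}$ becomes, by \cref{thm:gciogf} together with the definitions of \cref{thm:ctrhoogf}, the corresponding $\widetilde{\ctxy{k}} \pbrac{\sigma} \pbrac{x}$, $\widetilde{\cty{k}} \pbrac{\sigma} \pbrac{x}$, and $\widetilde{\ctxy{k}} \pbrac{\sigma} \pbrac{x}$. Reassembling the three pieces reproduces \cref{eq:akrhoogf} verbatim.

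I do not expect a genuine obstacle here, since the substance of the argument lives entirely in the cycle-index identity \cref{eq:akci}, and once that is in hand the present theorem is essentially a corollary. The only point deserving care is to confirm that $p_{i} \sbrac{x} \mapsto x^{i}$ is well-defined as a map of formal power series and interacts correctly with the linear combination. This is unproblematic: the coefficient of $x^{n}$ in any of these generating functions is determined by the finitely many cycle-index monomials of total degree $n$, so the specialization is defined degree-by-degree, and since the outer combination ranges over the finite groups $\symgp{k}$ and $\symgp{k+1}$, ordinary finite $\ringname{Q}$-linearity is all that is needed to push the substitution inside the sums, with no convergence or reordering subtleties.
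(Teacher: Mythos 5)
Your proposal is correct and follows essentially the same route as the paper: the paper likewise obtains \cref{eq:akrhoogf} by specializing the cycle-index identity \cref{eq:akci} under the substitutions of \cref{thm:ciogf,thm:gciogf} (after passing to one-sort indices via $y_{i} = 1$), identifying each resulting term with the $\Gamma$-ordinary generating functions defined in \cref{thm:ctrhoogf}. Your added remarks on linearity and well-definedness of the substitution merely make explicit what the paper leaves implicit.
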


Again, as a consequence of \cref{thm:ctciclassfunc} by way of \cref{cor:ctogf}, we can instead write
\begin{corollary}
  For $\kt{k}$ the species of $k$-trees and $\widetilde{\cty{k}}$ and $\widetilde{\ctxy{k}}$ as in \cref{cor:ctogf}, we have
  \begin{equation}
    \label{eq:akogf}
    \tilde{\mathfrak{a}}_{k} \pbrac{x} = \sum_{\lambda \vdash k+1} \frac{1}{z_{\lambda}} \widetilde{\ctxy{k}} \pbrac{\lambda} \pbrac{x} + \sum_{\lambda \vdash k} \frac{1}{z_{\lambda}} \widetilde{\cty{k}} \pbrac{\lambda} \pbrac{x} - \sum_{\lambda \vdash k} \frac{1}{z_{\lambda}} \widetilde{\ctxy{k}} \pbrac{\lambda \cup \cbrac{1}} \pbrac{x}.
  \end{equation}
\end{corollary}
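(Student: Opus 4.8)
The plan is to obtain \cref{eq:akogf} directly from \cref{eq:akrhoogf} of \cref{thm:akrhoogf} by replacing each average over a symmetric group with a sum over its conjugacy classes, in exactly the way \cref{eq:cycinddef} was rewritten as \cref{eq:cycinddefpart} in \cref{s:cycind}. The single ingredient this requires is that the summands be class functions, which is precisely the content of \cref{thm:ctciclassfunc}, transported to generating functions through \cref{cor:ctogf}: both $\widetilde{\cty{k}} \pbrac{\sigma}$ and $\widetilde{\ctxy{k}} \pbrac{\sigma}$ depend on $\sigma$ only through its cycle type.

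First I would recall the standard fact, already used in \cref{s:cycind}, that the conjugacy class of $\symgp{n}$ indexed by a partition $\lambda \vdash n$ contains exactly $n! / z_{\lambda}$ permutations. Applying this to the first summand of \cref{eq:akrhoogf}, where $\ctxy{k}$ is taken as an $\symgp{k+1}$-species, collapses $\frac{1}{\pbrac{k+1}!} \sum_{\sigma \in \symgp{k+1}} \widetilde{\ctxy{k}} \pbrac{\sigma} \pbrac{x}$ into $\sum_{\lambda \vdash k+1} \frac{1}{z_{\lambda}} \widetilde{\ctxy{k}} \pbrac{\lambda} \pbrac{x}$; the same regrouping applied to the second summand, where $\cty{k}$ is an $\symgp{k}$-species, yields $\sum_{\lambda \vdash k} \frac{1}{z_{\lambda}} \widetilde{\cty{k}} \pbrac{\lambda} \pbrac{x}$. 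These two terms are routine.

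The subtle term is the third. There $\ctxy{k}$ appears as an $\symgp{k}$-species, so each $\sigma \in \symgp{k}$ acts through the embedding $\Delta: \symgp{k} \to \symgp{k+1}$ of \cref{ss:actct}, which adjoins a fixed point. Hence a permutation $\sigma$ of cycle type $\lambda \vdash k$ is carried to $\Delta \sigma \in \symgp{k+1}$ of cycle type $\lambda \cup \cbrac{1}$, and its generating-function contribution is $\widetilde{\ctxy{k}} \pbrac{\lambda \cup \cbrac{1}} \pbrac{x}$. The crucial bookkeeping point---which I expect to be the only place demanding care---is that the averaging is still over $\symgp{k}$, so the number of such $\sigma$ is $k! / z_{\lambda}$ with $z_{\lambda}$ computed for $\lambda \vdash k$, \emph{not} $z_{\lambda \cup \cbrac{1}}$. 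Dividing by $k!$ therefore produces the coefficient $1/z_{\lambda}$ attached to the argument $\lambda \cup \cbrac{1}$, giving $\sum_{\lambda \vdash k} \frac{1}{z_{\lambda}} \widetilde{\ctxy{k}} \pbrac{\lambda \cup \cbrac{1}} \pbrac{x}$. Substituting the three regrouped sums into \cref{eq:akrhoogf} then yields \cref{eq:akogf}.
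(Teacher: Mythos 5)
Your proposal is correct and follows exactly the route the paper intends: the paper derives this corollary by invoking \cref{thm:ctciclassfunc} (via \cref{cor:ctogf}) to collapse each average over a symmetric group in \cref{eq:akrhoogf} into a sum over conjugacy classes weighted by $1/z_{\lambda}$, and your treatment of the third term---recognizing that the $\symgp{k}$-action on $\ctxy{k}$ factors through $\Delta$, so the argument becomes $\lambda \cup \cbrac{1}$ while the weight stays $1/z_{\lambda}$ for $\lambda \vdash k$---is precisely the bookkeeping the paper's formula encodes.
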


This direct characterization of the ordinary generating function of unlabeled $k$-trees, while still recursive, is much simpler computationally than the characterization of the full cycle index in \cref{eq:akci}.
For computation of the number of unlabeled $k$-trees, it is therefore much preferred.
Classical methods for working with recursively-defined power series suffice to extract the coefficients quickly and efficiently.
The results of some such explicit calculations are presented in \cref{s:ktenum}.

\section{Special-case behavior for small $k$}
Many of the complexities of the preceding analysis apply only for $k$ sufficiently large.
We note here some simplifications that are possible when $k$ is small.

\subsection{Ordinary trees ($k = 1$)}
When $k = 1$, an $\kt{k}$-structure is merely an ordinary tree with $X$-labels on its edges and $Y$-labels on its vertices.
There is no internal symmetry of the form that the actions of $\symgp{k}$ are intended to break.
The actions of $\symgp{2}$ act on ordinary trees rooted at a \emph{directed} edge, with the nontrivial element $\tau \in \symgp{2}$ acting by reversing this orientation.
The resulting decomposition from the dissymmetry theorem in \cref{thm:dissymk} and the recursive functional equations of \cref{obs:funcdecompct} then clearly reduce to the classical dissymmetry analysis of ordinary trees.

\subsection{$2$-trees}
When $k=2$, there is a nontrivial symmetry at fronts (edges); two triangles may be joined at an edge in two distinct ways.
The imposition of a coherent orientation on a $2$-tree by directing one of its edges breaks this symmetry; the action of $\symgp{2}$ by reversal of these orientations gives unoriented $2$-trees as its orbits.
The defined action of $\symgp{3}$ on edge-rooted oriented triangles is simply the classical action of the dihedral group $D_{6}$ on a triangle, and its orbits are unoriented, unrooted triangles.
We further note that $\rho_{i}$ is the trivial map on $\symgp{2}$ and that $\rho_{i} \pbrac{\sigma} = \pbrac{1\ 2}$ for $\sigma \in \symgp{3}$ if and only if $\sigma$ is an odd permutation, both regardless of $i$.
We then have that:
\begin{subequations}
  \label{eq:rest2trees}
  \begin{align}
    \gci{\symgp{2}}{\cty{2}} &= p_{1} \sbrac{y} \cdot \ci{\specname{E}} \circ \pbracs[Big]{p_{1} \sbrac{x} \cdot \prod_{c \in C \pbrac{\sigma}} \gcieltvars{\symgp{2}}{\cty{2}}{e}{p_{\abs{c}} \sbrac{x}, p_{2 \abs{c}} \sbrac{x}, \dots; p_{\abs{c}} \sbrac{y}, p_{2 \abs{c}} \sbrac{y}, \dots}} \label{eq:ctyfuncci2} \\
    \gci{\symgp{3}}{\ctxy{2}} &= p_{1} \sbrac{x} \cdot \prod_{c \in C \pbrac{\sigma}} \gci{\symgp{2}}{\cty{2}} \pbracs[big]{\rho \pbrac{\sigma}^{\abs{c}}} \pbrac{p_{\abs{c}} \sbrac{x}, p_{2 \abs{c}} \sbrac{x}, \dots; p_{\abs{c}} \sbrac{y}, p_{2 \abs{c}} \sbrac{y}, \dots}. \label{eq:ctxyfuncci2}
  \end{align}
\end{subequations}
where, by abuse of notation, we let $\rho$ represent any $\rho_{i}$.
By the previous, the argument $\rho \pbrac{\sigma}^{\abs{c}}$ in \cref{eq:ctxyfuncci2} is $\tau$ if and only if $\sigma$ is an odd permutation and $c$ is of odd length.
This analysis and the resulting formulas for the cycle index $\ci{\kt{2}}$ are essentially equivalent to those derived in \cite{gessel:spec2trees}.

\appendix
\chapter{Computation in species theory}\label{c:comp}
\section{Cycle indices of compositional inverse species}\label{s:compinv}
In \cref{s:nbp}, our results included two references to the compositional inverse $\specname{CBP}^{\bullet \abrac{-1}}$ of the species $\specname{CBP}^{\bullet}$.
Although we have not explored computational methods in depth here, the question of how to compute the cycle index of the compositional inverse of a specified species efficiently is worth some consideration.
Several methods are available, including one developed in \cite[4.2.19]{bll:species} as part of the proof that arbitrary species have compositional inverses, but our preferred method is one of iterated substitution.

Suppose that $\Psi$ is a species (with known cycle index) of the form $X + \Psi_{2} + \Psi_{3} + \dots$ where $\Psi_{i}$ is the restriction of $\Psi$ to structures on sets of cardinality $i$ and that $\Phi$ is the compositional inverse of $\Psi$.
Then $\Psi \circ \Phi = X$ by definition, but by hypothesis
\begin{equation*}
  X = \Psi \circ \Phi = \Phi + \Psi_{2} \pbrac{\Phi} + \Psi_{3} \pbrac{\Phi} + \dots
\end{equation*}
also. Thus
\begin{equation}
  \label{eq:compinv}
  \Phi = X - \Psi_{2} \pbrac{\Phi} - \Psi_{3} \pbrac{\Phi} - \dots.
\end{equation}
This recursive equation is the key to our computational method.
To compute the cycle index of $\Phi$ to degree $2$, we begin with the approximation $\Phi \approx X$ and then substitute it into the first two terms of \cref{eq:compinv}: $\Phi \approx X - \Psi_{2} \pbrac{X}$ and thus $\ci{\Phi} \approx \ci{X} - \ci{\Psi_{2}} \circ \ci{X}$.
All terms of degree up to two in this approximation will be correct.
To compute the cycle index of $\Phi$ to degree $3$, we then take this new approximation $\Phi \approx X - \Psi_{2} \pbrac{X}$ and substitute it into the first three terms of \cref{eq:compinv}.
This process can be iterated as many times as are needed; to determine all terms of degree up to $n$ correctly, we need only iterate $n$ times.
With appropriate optimizations (in particular, truncations), this method can run very quickly on a personal computer to reasonably high degrees; we were able to compute $\ci{\specname{CBP}^{\bullet \abrac{-1}}}$ to degree sixteen in thirteen seconds.

\chapter{Enumerative tables}\label{c:enum}
\section{Bipartite blocks}\label{s:bpenum}
With the tools developed in \cref{c:bpblocks}, we can calculate the cycle indices of the species $\mathcal{NBP}$ of nonseparable bipartite graphs to any finite degree we choose using computational methods.
This result can then be used to enumerate unlabeled bipartite blocks.
We have done so here using Sage 1.7.4 \cite{sage} and code listed in \cref{s:bpbcode}.
The resulting values appear in \cref{tab:bpblocks}.

\begin{table}[htb]
  \centering
  \caption{Enumerative data for unlabeled bipartite blocks with $n$ hedra}
  \label{tab:bpblocks}
  \begin{tabular}{r | r r}
    $n$ & Unlabeled \\\hline
    1 & 1 \\
    2 & 1 \\
    3 & 0 \\
    4 & 1 \\
    5 & 1 \\
    6 & 5 \\
    7 & 8 \\
    8 & 42 \\
    9 & 146 \\
    10 & 956
  \end{tabular}
\end{table}

\section{$k$-trees}\label{s:ktenum}
With the recursive functional equations for cycle indices of \cref{s:ktcycind}, we can calculate the explicit cycle index for the species $\kt{k}$ to any finite degree we choose using computational methods; this cycle index can then be used to enumerate both unlabeled and labeled (at fronts, hedra, or both) $k$-trees up to a specified number $n$ of hedra (or, equivalently, $kn + 1$ fronts).
We have done so here for $k \leq 7$ and $n \leq 30$ using Sage 1.7.4 \cite{sage} using code available in \cref{s:ktcode}.
The resulting values appear in \cref{tab:ktrees}.

We note that both unlabeled and hedron-labeled enumerations of $k$-trees stabilize:
\begin{theorem}
  \label{thm:ktreestab}
  For $k \geq n + 2$, the numbers of unlabeled and hedron-labeled $k$-trees are independent of $k$.
\end{theorem}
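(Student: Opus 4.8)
The plan is to exhibit an explicit bijection between $k$-trees and $(k+1)$-trees with the same number $n$ of hedra, valid once $k$ is large relative to $n$, realised by the \emph{coning} (join-with-an-apex) operation. Given a $k$-tree $T$ on vertex set $V$, write $T \ast \ast$ for the graph obtained by adjoining one new vertex $\ast$ adjacent to every vertex of $V$ (the join of $T$ with the single apex $\ast$). Since the recursive construction of a $k$-tree (start from $K_{k}$, repeatedly glue a hedron along a front) is carried verbatim into the construction of a $(k+1)$-tree once every clique is enlarged by $\ast$, the cone $T \ast \ast$ is a $(k+1)$-tree; moreover each hedron $C \cup \{v\}$ of $T$ corresponds to the hedron $C \cup \{v, \ast\}$ of $T \ast \ast$, so the number of hedra is preserved and the correspondence of hedra is natural. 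Coning therefore descends to a map on isomorphism classes and on hedron-labeled structures, and its image is precisely the set of $(k+1)$-trees possessing a \emph{universal vertex} (a vertex lying in every hedron, equivalently adjacent to all others).

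First I would check that coning is a bijection onto that image. The inverse is ``delete a universal vertex'': if $T'$ is a $(k+1)$-tree with universal-vertex set $U$, then $U$ is a clique contained in every hedron, so $T'$ is the join of the $(k{+}1{-}|U|)$-tree $T' \setminus U$ with the clique on $U$; deleting \emph{any} single vertex of $U$ yields a $k$-tree, and all such deletions give isomorphic results. Hence the de-coning map is well defined on isomorphism classes (and on hedron-labels, which it merely transports), and one verifies directly that it inverts coning. Consequently the number of $k$-trees with $n$ hedra equals the number of $(k+1)$-trees with $n$ hedra that admit a universal vertex, in both the unlabeled and the hedron-labeled settings.

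The theorem then reduces to the purely structural lemma that \emph{every $m$-tree with $n$ hedra has a universal vertex once $m$ is large enough}. The key step is the bound that at most $n-1$ of the vertices of any fixed hedron fail to be universal. To prove it, fix a hedron $h_{0}$ (which contains every universal vertex, since such a vertex lies in all hedra) and root the coding tree of the $m$-tree at the black vertex $h_{0}$. For a non-universal vertex $u \in h_{0}$, choose an omitting hedron $h$ (one with $u \notin h$) of minimal depth; its parent hedron $h^{+}$ then contains $u$, and the shared parent front $f = h \cap h^{+}$ satisfies $u \notin f$ (as $f \subseteq h$) while $u \in h^{+}$, so $u$ is the unique vertex of $h^{+} \setminus f$. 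Thus $u$ is recovered from $h$, the assignment $u \mapsto h$ is injective into the $n-1$ hedra other than $h_{0}$, and at most $n-1$ vertices of $h_{0}$ are non-universal. Since $h_{0}$ has $m+1$ vertices, at least $m - n + 2$ of them are universal, which is positive whenever $m \geq n-1$.

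Applying this with $m = k+1$ shows that coning from $k$ to $k+1$ is a bijection for every $k \geq n-2$, which in particular establishes that the unlabeled and hedron-labeled counts agree throughout the claimed range $k \geq n+2$. The main obstacle is the structural lemma of the third paragraph; everything else is bookkeeping about the join operation. The delicate points there are fronts shared by more than two hedra and the possibility of several universal vertices, both of which are dispatched by the rooted-coding-tree injection and the suspension description above. Note also that the argument yields the slightly stronger threshold $k \geq n-2$, so the bound $n+2$ is comfortably sufficient though not tight.
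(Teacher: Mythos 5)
Your proposal is correct and follows essentially the same route as the paper's own proof: the cone (join-with-apex) bijection between $k$-trees and $(k+1)$-trees possessing a universal vertex, inverted by deleting such a vertex, with well-definedness on isomorphism classes following from the join description. The only substantive difference is that you actually prove the key structural fact---that a $(k+1)$-tree with few hedra must have a universal vertex---via the rooted-coding-tree injection $u \mapsto h$, whereas the paper merely asserts it; your argument thereby makes explicit the threshold $k \geq n-2$ that is implicit in the paper's claim of contact up to order $k+2$.
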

\begin{proof}
  We show that the species $\kt{k}$ and $\kt{k+1}$ have contact up to order $k+2$ by explicitly constructing a natural bijection.
  We note that in a $\pbrac{k+1}$-tree with no more than $k+2$ hedra, there will exist at least one vertex which is common to \emph{all} hedra.
  For any $k$-tree with no more than $k+2$ hedra, we can construct a $\pbrac{k+1}$-tree with the same number of hedra by adding a single vertex and connecting it by edges to every existing vertex; we can then pass labels up from the $\pbrac{k+1}$-cliques which are the hedra of the $k$-tree to the $\pbrac{k+2}$-cliques which now sit over them.
  The resulting graph will be a $\pbrac{k+1}$-tree whose $\pbrac{k+1}$-tree hedra are adjacent exactly when the $k$-tree hedra they came from were adjacent.
  Therefore, any two distinct $k$-trees will pass to distinct $\pbrac{k+1}$-trees.
  Similarly, for any $\pbrac{k+1}$-tree with no more than $k+2$ hedra, choose one of the vertices common to all the hedra and remove it, passing the labels of $\pbrac{k+1}$-tree hedra down to the $k$-tree hedra constructed from them; again, adjacency of hedra is preserved.
  This of course creates a $k$-tree, and for distinct $\pbrac{k+1}$-trees the resulting $k$-trees will be distinct.
  Moreover, by symmetry the result is independent of the choice of common vertex, in the case there is more than one.
\end{proof}
However, thus far we have neither determined a direct method for computing these stabilization numbers nor identified a straightforward combinatorial characterization of the structures they represent.

\begin{table}[htb]
  \centering
  \caption{Enumerative data for $k$-trees with $n$ hedra}
  \label{tab:ktrees}

  \subfloat[$k = 1$]{
    \label{tab:1trees}
    \begin{tabular}{r | r}
      $n$ & Unlabeled $1$-trees \\ \hline
      0 & 1 \\
      1 & 1 \\
      2 & 1 \\
      3 & 2 \\
      4 & 3 \\
      5 & 6 \\
      6 & 11 \\
      7 & 23 \\
      8 & 47 \\
      9 & 106 \\
      10 & 235 \\
      11 & 551 \\
      12 & 1301 \\
      13 & 3159 \\
      14 & 7741 \\
      15 & 19320 \\
      16 & 48629 \\
      17 & 123867 \\
      18 & 317955 \\
      19 & 823065 \\
      20 & 2144505 \\
      21 & 5623756 \\
      22 & 14828074 \\
      23 & 39299897 \\
      24 & 104636890 \\
      25 & 279793450 \\
      26 & 751065460 \\
      27 & 2023443032 \\
      28 & 5469566585 \\
      29 & 14830871802 \\
      30 & 40330829030
    \end{tabular}
  }
  \qquad
  \subfloat[$k = 2$]{
    \label{tab:2trees}
    \begin{tabular}{r | r}
      $n$ & Unlabeled $2$-trees \\ \hline
      0 & 1 \\
      1 & 1 \\
      2 & 1 \\
      3 & 2 \\
      4 & 5 \\
      5 & 12 \\
      6 & 39 \\
      7 & 136 \\
      8 & 529 \\
      9 & 2171 \\
      10 & 9368 \\
      11 & 41534 \\
      12 & 188942 \\
      13 & 874906 \\
      14 & 4115060 \\
      15 & 19602156 \\
      16 & 94419351 \\
      17 & 459183768 \\
      18 & 2252217207 \\
      19 & 11130545494 \\
      20 & 55382155396 \\
      21 & 277255622646 \\
      22 & 1395731021610 \\
      23 & 7061871805974 \\
      24 & 35896206800034 \\
      25 & 183241761631584 \\
      26 & 939081790240231 \\
      27 & 4830116366008952 \\
      28 & 24927175920361855 \\
      29 & 129047003236769110 \\
      30 & 670024248072778235
    \end{tabular}
  }
\end{table}

\begin{table}[htb]
  \centering
  \ContinuedFloat
  \caption*{Enumerative data for $k$-trees with $n$ hedra, continued}
  \subfloat[$k = 3$]{
    \label{tab:3trees}
    \begin{tabular}{r | r}
      $n$ & Unlabeled $3$-trees \\ \hline
      0 & 1 \\
      1 & 1 \\
      2 & 1 \\
      3 & 2 \\
      4 & 5 \\
      5 & 15 \\
      6 & 58 \\
      7 & 275 \\
      8 & 1505 \\
      9 & 9003 \\
      10 & 56931 \\
      11 & 372973 \\
      12 & 2506312 \\
      13 & 17165954 \\
      14 & 119398333 \\
      15 & 841244274 \\
      16 & 5993093551 \\
      17 & 43109340222 \\
      18 & 312747109787 \\
      19 & 2286190318744 \\
      20 & 16826338257708 \\
      21 & 124605344758149 \\
      22 & 927910207739261 \\
      23 & 6945172081954449 \\
      24 & 52225283886702922 \\
      25 & 394398440097305861 \\
      26 & 2990207055800156659 \\
      27 & 22753619938517594709 \\
      28 & 173727411594289881739 \\
      29 & 1330614569159767263501 \\
      30 & 10221394007530945428347
    \end{tabular}
  }
  \qquad
  \subfloat[$k = 4$]{
    \label{tab:4trees}
    \begin{tabular}{r | r}
      $n$ & Unlabeled $4$-trees \\ \hline
      0 & 1 \\
      1 & 1 \\
      2 & 1 \\
      3 & 2 \\
      4 & 5 \\
      5 & 15 \\
      6 & 64 \\
      7 & 331 \\
      8 & 2150 \\
      9 & 15817 \\
      10 & 127194 \\
      11 & 1077639 \\
      12 & 9466983 \\
      13 & 85252938 \\
      14 & 782238933 \\
      15 & 7283470324 \\
      16 & 68639621442 \\
      17 & 653492361220 \\
      18 & 6276834750665 \\
      19 & 60759388837299 \\
      20 & 592227182125701 \\
      21 & 5808446697002391 \\
      22 & 57289008242377068 \\
      23 & 567939935463185078 \\
      24 & 5656700148512008902 \\
      25 & 56583199285317631541 \\
      26 & 568236762643725657852 \\
      27 & 5727423267612393252616 \\
      28 & 57924486783495226147615 \\
      29 & 587672090447840337304025 \\
      30 & 5979782184127687211698807
    \end{tabular}
  }
\end{table}

\begin{table}[htb]
  \centering
  \ContinuedFloat
  \caption*{Enumerative data for $k$-trees with $n$ hedra, continued}
  \subfloat[$k = 5$]{
    \label{tab:5trees}
    \begin{tabular}{r | r}
      $n$ & Unlabeled $5$-trees \\ \hline
      0 & 1 \\
      1 & 1 \\
      2 & 1 \\
      3 & 2 \\
      4 & 5 \\
      5 & 15 \\
      6 & 64 \\
      7 & 342 \\
      8 & 2321 \\
      9 & 18578 \\
      10 & 168287 \\
      11 & 1656209 \\
      12 & 17288336 \\
      13 & 188006362 \\
      14 & 2105867058 \\
      15 & 24108331027 \\
      16 & 280638347609 \\
      17 & 3310098377912 \\
      18 & 39462525169310 \\
      19 & 474697793413215 \\
      20 & 5754095507495584 \\
      21 & 70216415130786725 \\
      22 & 861924378411516159 \\
      23 & 10636562125193377459 \\
      24 & 131890971196221692874 \\
      25 & 1642577274341274449247 \\
      26 & 20538830517384955820622 \\
      27 & 257767439475728146293796 \\
      28 & 3246108646710813383678978 \\
      29 & 41008581189552637540038747 \\
      30 & 519599497193547405843864376
    \end{tabular}
  }
  \quad
  \subfloat[$k = 6$]{
    \label{tab:6trees}
    \begin{tabular}{r | r}
      $n$ & Unlabeled $6$-trees \\ \hline
      0 & 1 \\
      1 & 1 \\
      2 & 1 \\
      3 & 2 \\
      4 & 5 \\
      5 & 15 \\
      6 & 64 \\
      7 & 342 \\
      8 & 2344 \\
      9 & 19090 \\
      10 & 179562 \\
      11 & 1878277 \\
      12 & 21365403 \\
      13 & 258965451 \\
      14 & 3294561195 \\
      15 & 43472906719 \\
      16 & 589744428065 \\
      17 & 8171396893523 \\
      18 & 115094557122380 \\
      19 & 1642269376265063 \\
      20 & 23679803216530017 \\
      21 & 344396036645439675 \\
      22 & 5045351124912000756 \\
      23 & 74375422235109338507 \\
      24 & 1102368908826371717478 \\
      25 & 16417712341047912048640 \\
      26 & 245566461812077209025580 \\
      27 & 3687384661929075391318298 \\
      28 & 55566472746158319169779382 \\
      29 & 840092106663809502446963972 \\
      30 & 12739517442131428048314937036
    \end{tabular}
  }
\end{table}

\begin{table}[htb]
  \centering
  \ContinuedFloat
  \caption*{Enumerative data for $k$-trees with $n$ hedra, continued}
  \subfloat[$k = 7$]{
    \label{tab:7trees}
    \begin{tabular}{r | r}
      $n$ & Unlabeled $7$-trees \\ \hline
      0 & 1 \\
      1 & 1 \\
      2 & 1 \\
      3 & 2 \\
      4 & 5 \\
      5 & 15 \\
      6 & 64 \\
      7 & 342 \\
      8 & 2344 \\
      9 & 19137 \\
      10 & 181098 \\
      11 & 1922215 \\
      12 & 22472875 \\
      13 & 284556458 \\
      14 & 3849828695 \\
      15 & 54974808527 \\
      16 & 819865209740 \\
      17 & 12655913153775 \\
      18 & 200748351368185 \\
      19 & 3253193955012557 \\
      20 & 53619437319817482 \\
      21 & 895778170144927928 \\
      22 & 15129118461773051724 \\
      23 & 257812223121779545108 \\
      24 & 4426056869082751747930 \\
      25 & 76463433541541506345648 \\
      26 & 1328088941166844504424628 \\
      27 & 23175796698013212039339479 \\
      28 & 406103563562864890670029228 \\
      29 & 7142350290468621849814034057 \\
      30 & 126034923903699365819345698783
    \end{tabular}   
  }
\end{table}

\begin{table}[htb]
  \centering
  \ContinuedFloat
  \caption*{Enumerative data for $k$-trees with $n$ hedra, continued}
  \subfloat[$k = 8$]{
    \label{tab:8trees}
    \begin{tabular}{r | r}
      $n$ & Unlabeled $8$-trees \\ \hline
      0 & 1 \\
      1 & 1 \\
      2 & 1 \\
      3 & 2 \\
      4 & 5 \\
      5 & 15 \\
      6 & 64 \\
      7 & 342 \\
      8 & 2344 \\
      9 & 19137 \\
      10 & 181204 \\
      11 & 1926782 \\
      12 & 22638677 \\
      13 & 289742922 \\
      14 & 3996857019 \\
      15 & 58854922207 \\
      16 & 916955507587 \\
      17 & 14988769972628 \\
      18 & 255067524402905 \\
      19 & 4487202163529135 \\
      20 & 81112295567987808 \\
      21 & 1498874117898285574 \\
      22 & 28195965395340358096 \\
      23 & 538126404726276758908 \\
      24 & 10391826059632904271057 \\
      25 & 202624626664206041379718 \\
      26 & 3982593421723767068438772 \\
      27 & 78804180647706388187446055 \\
      28 & 1568191570016583843925943321 \\
      29 & 31359266621157738864915907470 \\
      30 & 629755261439815181073415721542
    \end{tabular}
  }
\end{table}

\begin{table}[htb]
  \centering
  \ContinuedFloat
  \caption*{Enumerative data for $k$-trees with $n$ hedra, continued}
  \subfloat[$k = 9$]{
    \label{tab:9trees}
    \begin{tabular}{r | r}
      $n$ & Unlabeled $9$-trees \\ \hline
      0 & 1 \\
      1 & 1 \\
      2 & 1 \\
      3 & 2 \\
      4 & 5 \\
      5 & 15 \\
      6 & 64 \\
      7 & 342 \\
      8 & 2344 \\
      9 & 19137 \\
      10 & 181204 \\
      11 & 1927017 \\
      12 & 22652254 \\
      13 & 290351000 \\
      14 & 4019973352 \\
      15 & 59642496465 \\
      16 & 941751344429 \\
      17 & 15724551551655 \\
      18 & 275926445572426 \\
      19 & 5057692869843759 \\
      20 & 96275031338911591 \\
      21 & 1892687812366295682 \\
      22 & 38234411627616084843 \\
      23 & 790120238796588845615 \\
      24 & 16638524087850961727575 \\
      25 & 355878246778832856290372 \\
      26 & 7710423952280397990026132 \\
      27 & 168843592748278228259801752 \\
      28 & 3730285520855433827693340329 \\
      29 & 83027821492843727307516904184 \\
      30 & 1859625249087075723295908757282
    \end{tabular}
  }
\end{table}

\begin{table}[htb]
  \centering
  \ContinuedFloat
  \caption*{Enumerative data for $k$-trees with $n$ hedra, continued}
  \subfloat[$k = 10$]{
    \label{tab:10trees}
    \begin{tabular}{r | r}
      $n$ & Unlabeled $10$-trees \\ \hline
      0 & 1 \\
      1 & 1 \\
      2 & 1 \\
      3 & 2 \\
      4 & 5 \\
      5 & 15 \\
      6 & 64 \\
      7 & 342 \\
      8 & 2344 \\
      9 & 19137 \\
      10 & 181204 \\
      11 & 1927017 \\
      12 & 22652805 \\
      13 & 290391147 \\
      14 & 4022154893 \\
      15 & 59741455314 \\
      16 & 945737514583 \\
      17 & 15871943695637 \\
      18 & 281035862707569 \\
      19 & 5226147900656616 \\
      20 & 101612006684523937 \\
      21 & 2056425123910104429 \\
      22 & 43127730369661586804 \\
      23 & 933229734601789336024 \\
      24 & 20749443766669472108394 \\
      25 & 472211306357077710523863 \\
      26 & 10961384502758318928846970 \\
      27 & 258737420965101611169934566 \\
      28 & 6193917223279376307682721853 \\
      29 & 150039339181032274342778699887 \\
      30 & 3670778410024403632885217999313
    \end{tabular}
  }
\end{table}

\chapter{Code listing}\label{c:code}
Our results in \cref{c:bpblocks,c:ktrees} provide a framework for enumerating bipartite blocks and general $k$-trees.
However, there is significant work to be done adapting the theory into practical algorithms for computing the actual numbers of such structures.
Using the computer algebra system Sage 1.7.4 \cite{sage}, we have done exactly this.
In each case, the script listed may be run with Sage by invoking
\begin{verbatim}
> sage --python scriptname.py args
\end{verbatim}
on a computer with a functioning Sage installation.
Alternatively, each code snippet may be executed in the Sage `notebook' interface starting at the comment ``\texttt{MATH BEGINS HERE}''; in this case, the final \texttt{print\dots} invocation should be replaced with one specifying the desired parameters.

\section{Bipartite blocks}\label{s:bpbcode}
The functional \cref{eq:nbpexp} characterizes the cycle index of the species $\specname{NBP}$ of bipartite blocks.
Python/Sage code to compute the coefficients of the ordinary generating function $\widetilde{\specname{NBP}} \pbrac{x}$ of unlabeled bipartite blocks explicitly follows in \cref{lst:bpcode}.
Specifically, the generating function may be computed to degree $n$ by invoking
\begin{verbatim}
> sage --python bpblocks.py n
\end{verbatim}
on a computer with a functioning Sage installation.

\lstinputlisting[caption=Sage code to compute numbers of bipartite blocks (\texttt{bpblocks.py}), label=lst:bpcode, language=Python]{python/bpblocks.py}

\section{$k$-trees}\label{s:ktcode}
The recursive functional equations in \cref{eq:ctyogf,eq:ctxyogf,eq:akogf} characterize the ordinary generating function $\tilde{\mathfrak{a}}_{k} \pbrac{x}$ for unlabeled general $k$-trees.
Python/Sage code to compute the coefficients of this generating function explicitly follows in \cref{lst:ktcode}.
Specifically, the generating function for unlabeled $k$-trees may be computed to degree $n$ by invoking
\begin{verbatim}
> sage --python ktrees.py k n
\end{verbatim}
on a computer with a functioning Sage installation.

This code uses the class-function optimization of \cref{thm:ctciclassfunc} extensively; as a result, it is able to compute the number of $k$-trees on up to $n$ hedra quickly even for relatively large $k$ and $n$.
For example, the first thirty terms of the generating function for $8$-trees in \cref{tab:8trees} were computed on a modern desktop-class computer in approximately two minutes.

\lstinputlisting[caption=Sage code to compute numbers of $k$-trees (\texttt{ktrees.py}), label=lst:ktcode, language=Python]{python/ktrees.py}

\backmatter

\bibliographystyle{amsplain} 
\bibliography{sources}

\end{document}